\documentclass[11pt,letter]{scrartcl}
\usepackage{amsmath}
\usepackage{amssymb}
\usepackage{amsthm}
\usepackage{mathtools}
\usepackage{hyperref}
\usepackage[automark]{scrpage2}

\usepackage{tikz}
\usetikzlibrary{positioning}

 \newtheorem{thr}{Theorem}[section]
 \newtheorem{lem}{\bf Lemma}[section]
 \newtheorem{sleds}{\bf Corollary}[section]
 \newtheorem{prop}{\bf Proposition}[section]
 \newtheorem{defn}{\bf Definition}[section]
 
 \newtheorem{rem}{\bf Remark}[section]

 \usepackage{mathbbol}
\usepackage[mathscr]{euscript}
\newenvironment{posmallmatrix}
  {\left(\begin{smallmatrix}}
  {\end{smallmatrix}\right)}

\newcommand{\R}{\mathbb R}
\newcommand{\C}{\mathbb C}
\newcommand{\N}{\mathbb N}
\newcommand{\rb}{{\bf r}}
\newcommand{\tb}{{\bf t}}
\newcommand{\mb}{{\bf m}}
\newcommand{\lb}{{\bf l}}
\newcommand{\Mb}{{\bf M}}
\newcommand{\Lb}{{\bf L}}
\newcommand{\MM}{\pmb{\mathbb m}}
\newcommand{\LL}{\pmb{\mathbb l}}

\newcommand{\MMC}{\pmb{\mathbb M}}
\newcommand{\LLC}{\pmb{\mathbb L}}

\mathtoolsset{mathic=true}

\numberwithin{equation}{section}
\pagestyle{scrheadings}
\sloppy

\title{The Resolvent Matrix of the Truncated Hausdorff
 Matrix Moment Problem via New Dyukarev--Stieltjes
 Parameters and Extremal Solutions via Continued Fractions}
\author{
Abdon E. Choque-Rivero \footnote{A.~E.~Choque-Rivero is supported by
SNI--CONACYT and CIC--UMSNH, M\'exico.} }
\date{}
\begin{document}
\maketitle

\begin{abstract}
 We obtain a new multiplicative decomposition of the
 resolvent matrix of the truncated  Hausdorff matrix moment (THMM)
 problem in the case of an odd
 and even number of moments via new Dyukarev--Stieltjes matrix (DSM)
 parameters.
   Explicit interrelations between new DSM
 parameters
 and orthogonal matrix polynomials on a finite interval $[a,b]$, as
 well as the Schur complements of the block Hankel matrices
 constructed through the moments of the THMM problem are given.
 Additionally, the extremal solutions of the THMM problem are represented via matrix continued
 fractions in terms of the DSM parameters.
\end{abstract}

\subsection*{Keywords:}
Resolvent matrix, orthogonal matrix polynomials, Dyukarev-Stieltjes
parameters, matrix continued fractions.
%
%
\section{Introduction}
Throughout this paper, let $q$ and $p$ be positive integers. We will
use ${\mathbb C}$, ${\mathbb R}$, ${\mathbb N}_0$ and ${\mathbb N}$
to denote the set of all complex numbers, the set of all real
numbers, the set of all nonnegative integers, and the set of all
positive integers, respectively.  The notation ${\mathbb C}^{q\times
q}$ stands for the set of all complex $q\times q$ matrices. For the
null matrix that belongs to ${\mathbb C}^{p\times q}$ we
 will write $0_{p\times q}$.
 We denote by $0_{q}$ and $I_q$  the null and the identity   matrices
 in ${\mathbb C}^{q\times
 q}$, respectively.  In cases where the sizes of the null and the identity matrix are
 clear, we will omit the indices.

 \vskip2mm

The continued fraction
 \begin{align} \label{103}
\cfrac{1}{m_{1}\,z+\cfrac{1}{l_{1}+\cfrac{1}{m_{2}\,z+\cfrac{1}{l_{2}}+\cfrac{1}{\ddots
\, }}}
  }
 \end{align}
where 
$l_j$ and $m_j$ are positive numbers was used by T.J. Stieltjes in
\cite{stie} to determine whether there is a unique (resp.
non-unique) solution of the Stieltjes moment problem. This problem
is stated as follows: given a sequence $(s_k)_{k\geq0}$ of real
numbers, find the set ${\mathcal M}$ of positive measures $\sigma$
on $[0,\infty)$ such that $s_k =\int_0^\infty  x^k d\sigma(x)$ for
$k = 0,1,\ldots$. 
 Note that
 \begin{equation} \label{scalm}
 l_j:=\frac{\Delta_{j}^2}{\Delta_{j-1}^{(1)}
 \Delta_{j-2}^{(1)}},\quad \mbox{and} \quad
 m_j:=\frac{[\Delta_{j}^{(1)}]^2}{\Delta_j \Delta_{j-1}},
 \end{equation}
 with
  $\Delta_j:=\det (s_{i+k})_{i,k=0}^j$, \, $\Delta_j^{(1)}:=\det
  (s_{i+k+1})_{i,k=0}^j$ and ($\Delta_{-1}=\Delta_{-1}^{(1)}:=1$).
Here we use  Krein's notation \cite{kre1} for the nonnegative
coefficients $l_j$ and $m_j$, also called Stieltjes parameters.

It is well-known that the truncated continued fraction of
(\ref{103}) can be written as $F_p(z,0)$ where $F_p$ is a
composition of M\"{o}bius transformations: $F_p=f_1\circ f_2\circ
\ldots \circ f_p$ with $f_{2k-1}(z,\omega):=\frac{1}{-z
m_k+\omega}$, $f_{2k}(\omega):=\frac{1}{l_k+\omega}$ for each $k\in
\N$; see \cite[Page 474]{hen}.
 On the other hand, denoting the matrix of the transformations
 $f_{2k-1}$, $f_{2k}$
 by $t_{2k-1}:=\left(
           \begin{array}{cc} 0 & 1 \\ 1&-z m_{k}
           \end{array}         \right)$,
          $t_{2k}:=\left(
           \begin{array}{cc} 0 & 1 \\ 1&  l_{k}
           \end{array}
         \right)$, respectively, one can associate the M\"{o}bius
         transformation with the $2\times 2$ matrix
          \begin{equation}
         \mathscr{T}_{2n}:=
 t_1\,t_2\,\ldots\, t_{2n},
\label{0001}
          \end{equation}
or
 \begin{equation} \label{0002}
  \mathscr{T}_{2n+1}:=t_1\,t_2\,\ldots\,
          t_{2n+1}
          \end{equation}
for $\omega=0$. See \cite[Theorem 12.1a]{hen}.

In \cite{dyu0} Dyukarev introduced the matrix version of the
Stieltjes parameters in order to establish the determinateness of
the  Stieltjes matrix moment problem. An important feature used in
\cite{dyu0} is the similar factorization to (\ref{0001}) and
(\ref{0002}) of the resolvent matrix of the truncated Stieltjes
matrix moment problem via Stieltjes parameters.
 Recall that the mentioned resolvent matrix,
also called Nevanlinna matrix \cite[Definition 2]{dyu0}, is a
$2q\times 2q$ matrix polynomial.

\vskip2mm

 In the present work we introduce new matrix Stieltjes parameters,
 called Dyukarev-Stieltjes matrix (DSM) parameters of the truncated
  Hausdorff matrix moment (THMM) problem. With the help of the DSM parameters,
 we obtain a new multiplicative representation of the
resolvent matrix (RM):
 \begin{equation}
 U^{(m)}(z)=\left(\begin{array}{cc}
                 \alpha^{(m)}(z) & \beta^{(m)}(z) \\
                 \gamma^{(m)}(z) & \delta^{(m)}(z)
               \end{array}
 \right)
 \end{equation}
  of the THMM problem in the case of an odd and even number of moments.
The RM $U^{(m)}$ is a $2q\times 2q$  matrix polynomial which we
factorize as follows:
\begin{align}
U^{(2n)}=&\mathscr{D}_1 \LL_{-1}^{(2n)}\MM_0^{(2n)}\ldots
\MM_{n-1}^{(2n)} \LL_{n-1}^{(2n)}\mathscr{B}^{(2n)}_2 \mathscr{D}_2
 \label{0003}\\
U^{(2n+1)}=&\mathscr{D}_3 \LL_{-1}^{(2n+1)}\MM_0^{(2n+1)}\ldots
 \LL_{n-1}^{(2n+1)}\MM_{n}^{(2n+1)}\mathscr{B}^{(2n+1)}_2 {\bf D}_4
 \label{0004}
\end{align}
where $\mathscr{D}_k$  are anti-diagonal block matrices,
 ${\bf D}_4$ is a diagonal matrix,
$\mathscr{B}^{(2n)}_2$, $\mathscr{B}^{(2n+1)}_2$, $\LL_j^{(2n+1)}$,
$\MM_j^{(2n)}$ are constant anti-triangular  block matrices and
$\MM_j^{(2n+1)}$, $\LL_j^{(2n)}$ are affine on $z$ and
 anti-triangular block matrices;
 see Theorem \ref{mainT1} and Remark \ref{lem2016}. 
As a consequence of the representation of the RM as in (\ref{0003})
and (\ref{0004}), the so-called extremal solutions of the THMM
problem $\alpha^{(2n)}\gamma^{(2n)^{-1}}$,
$\beta^{(2n)}\delta^{(2n)^{-1}}$,
$\alpha^{(2n+1)}\gamma^{(2n+1)^{-1}}$ and
$\beta^{(2n+1)}\delta^{(2n+1)^{-1}}$ are given by a
 finite matrix continued fraction; see Theorem \ref{mainT2}.

\vskip2mm
 Our motivation is to give a complete characterization of solvability
 of the THMM problem via the DSM parameters.
 Such characterization will be
 considered elsewhere. Proposition \ref{propHnew} and \cite[Proposition 7]{abdonDS1}  are directed
 to the mentioned characterization.
 In the indicated propositions, the recovery of positive
 moment  sequences $(s_j)_{j=0}^m$ (as in Definition \ref{deboth})
  from the DSM parameters are given; see Definition \ref{st2ndef-2np1}.

\vskip3mm

 Let us now summarize the notions appearing in the last two
paragraphs.

\vskip2mm

The THMM problem is stated as follows: given an interval $[a,b]$ on
the real axis and a finite sequence of $q\times q$ matrices,
$(s_j)_{j=0}^m$. Describe the set ${\mathcal
M}_\geq^q[[a,b],\mathfrak B\cap[a,b];
  (s_j)_{j=0}^m]$ of all nonnegative Hermitian $q\times q$ measures
  $\sigma$ defined on the $\sigma$-algebra of all Borel subsets of
  the interval $[a,b]$ such that
$$
s_j=\int\limits_{[a,b]} t^j d\sigma(t)
 $$
 holds true for each integer $j$ with $0\leq j\leq m$.

Let $(s_j)_{j=0}^{2n}$ (resp. $(s_j)_{j=0}^{2n+1}$) be a sequence of
complex $q\times q$ matrices, then denote
\begin{align}
 H_{1,j}:=\widetilde{H}_{0,j},\, j\geq0,\ \
H_{2,j-1}:=-ab\widetilde{H}_{0,j-1}+(a+b)\widetilde{H}_{1,j-1}
-\widetilde{H}_{2,j-1}, \, j\geq 1 \label{63}
\end{align}
and
\begin{equation}\label{2nmas1}
  K_{1,j}=b\widetilde{H}_{0,j}-\widetilde{H}_{1,j}
  , \quad
  K_{2,j}=-a\widetilde{H}_{0,j}+\widetilde{H}_{1,j}, \quad  j\geq 0,
 \end{equation}
where $\widetilde H_{\cdot,j}$
 are defined with the help of the Hankel matrices,
$$
\widetilde{H}_{0,j}:=\{s_{l+k}\}_{l,k=0}^{j},
 \quad \widetilde{H}_{1,j}:=\{s_{l+k+1}\}_{l,k=0}^{j},
 \quad \mbox{and} \quad
 \widetilde{H}_{2,j}:=\{s_{l+k+2}\}_{l,k=0}^{j}.
$$
In \cite[Theorem 1.3]{abdon2} (resp. \cite[Theorem 1.3]{abdon1}), it
was demonstrated that there exists a solution to
 problem ${\mathcal M}_\geq^q[[a,b],\mathfrak B\cap[a,b]; (s_j)_{j=0}^{2n}]$
 (resp. ${\mathcal M}_\geq^q[[a,b],\mathfrak B\cap[a,b]; (s_j)_{j=0}^{2n+1}]$
 if and only if the block matrices $H_{1,n}$ and
$H_{2,n-1}$ (resp. $K_{1,n}$ and $K_{2,n}$) are  both nonnegative
Hermitian, where the problem of finding the set ${\mathcal
M}_\geq^q[[a,b],\mathfrak B\cap[a,b]; (s_j)_{j=0}^{m}]$ for $m=2n$
and $m=2n+1$ is usually reduced to 
 searching for  the set of holomorphic functions
 \begin{align*}
{\mathfrak S}_\geq^q[[a,b],\mathfrak B\cap[a,b];
 (s_j)_{j=0}^{m}] 
  :=\left\{
 s(z)=\int_{[a,b]}\frac{d\sigma(t)}{t-z},\,
 \sigma\in {\mathcal M}_\geq^q[[a,b],\mathfrak B\cap[a,b];
 (s_j)_{j=0}^{m}]
 \right\}.
 \end{align*}
 If $H_{1,n}$ and $H_{2,n-1}$ (resp. $K_{1,n}$ and $K_{2,n}$) are
 positive Hermitian, the set
 ${\mathfrak S}_\geq^q[[a,b],\mathfrak
 B\cap[a,b];
 (s_j)_{j=0}^{m}]$ is  parameterized via a linear fractional
 transformation
\begin{equation}
 s(z)=(\alpha^{(m)}(z){\bf p}(z)+\beta^{(m)}(z){\bf q}(z))
 (\gamma^{(m)}(z){\bf p}(z)+\delta^{(m)}(z){\bf q}(z))^{-1}. %
 \label{0002a}
\end{equation}
The column pair $({\bf p}, {\bf q})$ satisfies certain properties in
every case; see Definitions \cite[Definition 5.2]{abdon1}
 and \cite[Definition 5.2]{abdon2}.

 The $2q\times 2q$ polynomial matrices
\begin{align}
U^{(2j)}(z,a,b):=&\left(
 \begin{array}{cc}
   \Theta_{2,j}^*(\bar z,a)\Theta_{2,j}^{*^{-1}}(a,a) & \frac{1}{b-a}
   \Theta_{1,j}^*(\bar z,b)
   \Gamma_{1,j}^{*^{-1}}(a,b) \\
   (z-a)\Gamma_{2,j}^*(\bar z,a)\Theta_{2,j}^{*^{-1}}(a,a)
    &\frac{b-z}{b-a}
    \Gamma_{1,j}^*(\bar z,b)\Gamma_{1,j}^{*^{-1}}(a,b) \\
 \end{array}
 \right)\label{uujd}
 \end{align}
 and
 \begin{align}
U^{(2j+1)}(z,a,b)
:= \left(
 \begin{array}{cc}
   Q_{2,j}^*(\bar z,a,b)Q_{2,j}^{*^{-1}}(a,b,a)
   & -Q_{1,j+1}^*(\bar z)P_{1,j+1}^{*^{-1}}(a) \\
   -(z-a)(b-z)P_{2,j}^*(\bar z,a,b)Q_{2,j}^{*^{-1}}(a,b,a)
   &P_{1,j+1}^*(\bar z)P_{1,j+1}^{*^{-1}}(a) \\
 \end{array}
 \right)\label{uujdeven}
\end{align}
 are called the resolvent matrix (RM) of the THMM problem.
  The $q\times q$ matrix polynomials  $P_{k,j}$, $Q_{k,j}$, $\Gamma_{k,j}$
  and $\Theta_{k,j}$ for $k=\{1,2\}$
  are constructed
 via the given data: the sequence of moments $(s_j)_{j=0}^{2n}$ (resp.
 $(s_j)_{j=0}^{2n+1}$). See Definition \ref{de002} and \ref{def001}.

   In \cite{abdonDS1}, the relationships
 $U^{(2j+1)}= \left(
  \begin{array}{cc}
    \frac{1}{z-a}I_q & 0_q \\
    0_q & I_q
  \end{array}
\right) \widetilde U_1^{(2j+1)}A^{(2j+1)}$ $\cdot\left(
                                           \begin{array}{cc}
                                             (z-a)I_q & 0_q \\
                                             0_q & I_q
                                           \end{array}
                                         \right)
$ and $U^{(2j)}=\widetilde U_1^{(2j)}A^{(2j)}$
 were used, with $A^{(k)}$ denoting $2q\times 2q$ matrices
depending on $a$ and $b$ in order to factorize the matrices
$U^{(m)}$ for $m$ odd and even.
 Instead of the mentioned relations, in the
present paper we employ the following two relations:
\begin{align}
 U^{(2j)}(z)=
 \left(
               \begin{array}{cc}
                 \frac{1}{(z-a)(b-z)}I_q & 0_q \\
                 0_q & I_q \\
               \end{array}
             \right)\left(
               \begin{array}{cc}
                 I_q & s_0 z \\
                 0_q & I_q \\
               \end{array}
             \right)
             \widetilde U_2^{(2j-2)}(z)A_2^{(2j)}\left(
                                   \begin{array}{cc}
                                     (b-a)(z-a)I_q & 0_q \\
                                     0_q & \frac{b-z}{b-a}I_q \\
                                   \end{array}
                                 \right).\label{eq29aa}
\end{align}
and
\begin{align}
 U^{(2j+1)}(z)
 =\left( \begin{array}{cc}
 \frac{1}{b-z}I_q & 0_q \\  0_q & I_q \\ \end{array}
             \right) \widetilde U_2^{(2j+1)}(z)A_2^{(2j+1)}\left(
                                   \begin{array}{cc}
                                     (b-z)I_q & 0_q \\
                                     0_q & I_q
                                   \end{array}
                                 \right),\label{eq29aa0}
\end{align}
 where $\widetilde U_2^{(k)}(z)$, $A_2^{(k)}(z)$ for $k=2j$
 ($k=2j+1$)
 are introduced in (\ref{leipRMe22}), (\ref{RM100}),
 (\ref{leipB12nm1}) and (\ref{lei6.16}).
 Equalities (\ref{eq29aa}) and (\ref{eq29aa0}) are the consequence of
  \cite[Equality (6.26)]{abdon2} and
   \cite[Equalities (6.26),(6.27)]{abdon1}.

Note that the auxiliary matrices $\widetilde U_1^{(2j)}$ and
$\widetilde U_1^{(2j+1)}$ (resp. $\widetilde U_2^{(2j)}$ and
$\widetilde U_2^{(2j+1)}$) are related to $H_{1,j}$ and $K_{2,j}$
(resp. $H_{2,j}$ and $K_{1,j}$),
correspondingly. 

  As in \cite[Corollary 1 and Corollary 2]{abdonDS1} where the auxiliary
   matrix $\widetilde U_1^{(k)}$ was decomposed,
  in the present work  we factorize
 the auxiliary matrix $\widetilde U_2^{(2n+1)}$ in the following
  form (as in Corollary
  \ref{cor-2np1}):
 \begin{align}
 \widetilde
U_2^{(2n+1)}=&d^{(1)}d^{(3)}\ldots d^{(2n-1)}d^{(2n+1)}
\label{uuu2nm1}
 \end{align}
Instead of  $\widetilde U_2^{(2n)}$ the auxiliary
  matrix $\widehat U_2^{(2n)}$ (as in (\ref{RM1000})) is used. The factorization
 \begin{align}
 \widehat U_2^{(2n)}=d^{(0)}d^{(2)}\ldots d^{(2n-4)}d^{(2n-2)}
\label{uuu2n}
 \end{align}
 is employed to prove a new factorization of the RM $U^{(2n)}$.
 The matrices $d^{(2k+1)}$ and $d^{(2k)}$ are  affine on $z$.
 The importance of the  auxiliary matrices $\widetilde U_2^{(2j+1)}$ and
 $\widehat U_2^{(2j)}$ resides in the fact that they belong to the Potapov
  class of  matrix functions \cite{pot0}. The matrix valued functions
  belonging to this class  can be factorized into elementary
 factors, as seen in Corollary \ref{utildes}.
  A similar factorization was revealed by Yu. Dyukarev in \cite{dyu001} by
 developing a multiplicative representation of the RM of the
 truncated Stieltjes matrix moment (TSMM) problem.
Generalized Stieltjes parameters for Nevanlinna-Pick  interpolation
problems in certain Nevanlinna classes  were considered in
 \cite{dyu11} and \cite{seri2}.
 The determinateness of the TSMM problem was
 obtained in \cite{dyu0} with the help of Dyukarev--Stieltjes matrix
 parameters of the TSMM.  In
\cite{dyu2015} a criterion for complete indeterminacy of limiting
Stieltjes interpolation problem in terms of orthonormal matrix
functions was achieved.
 In \cite{abdyu1}, by using a decomposition of the RM of the TSMM problem,
 the following were demonstrated:
 necessary and sufficient conditions for the TSMM
  problem to have a unique solution and infinitely many
solutions for the Hamburger moment problem with the same moments.
 Note that in \cite{zag} and \cite{abZag} the operator approach was
employed to solve  the THMM.

\vskip2mm

 The main results of the present work
  are the new factorization of the RM $U^{(m)}$ presented in
  Theorem \ref{mainT1} and the
  representation of the extremal solutions with the help
  of matrix continued fractions in terms of
  DSM parameters; see Theorem \ref{mainT2}.
Matrix continued fractions were studied in \cite{ApNi},
\cite{sorokin}, \cite{zha}, \cite{zyg}, \cite{simon} and references
therein.

\vskip1mm

   Additionally, the following facts are
  attained:
\begin{itemize}
\item[(i)] We obtain a multiplicative representation of the auxiliary matrices
$\widetilde U_2^{(m)}$ (as in Corollary \ref{cor-2np1}) via new
auxiliary Blaschke--Potapov factors $d^{(m)}$;  see (\ref{leipg0}),
(\ref{leipgj}), (\ref{leipfj}).
\item[(ii)] Each Blaschke--Potapov factor $d^{(2j)}$ (resp. $d^{(2j+1)}$)
  is decomposed via new Dyukarev--Stieltjes parameters type matrices
 $\mb_k$ and $\rb_k$ (resp. $\tb_k$ and $\lb_k$).
  See  Theorem \ref{mth001-2np1new}.
 \item[(iii)]
   Explicit relations between $P_{2,j}$, $Q_{2,j}$, $\Gamma_{1,j}$,
   $\Theta_{1,j}$ at $z=a$ and DSM parameters $\mb_j$, $\lb_j$ are given. See
  Proposition \ref{propa1new} and Remark \ref{remaa1new}.
 \item[(iv)]
  Explicit relations between the Schur complements $\widehat
  H_{2,j}$, $\widehat K_{1,j}$ (as in (\ref{32aa}), (\ref{32aa-2np1}))
   and the DSM parameters $\mb_j$, $\lb_j$
  (as in (\ref{lkk2}), (\ref{lkk3}))
  are given. On the other hand,
  explicit relations between the Schur complement
$\widehat H_{2,j}$, $\widehat K_{1,j}$ and polynomials $P_{2,j}$,
$Q_{2,j}$, $\Gamma_{1,j}$, $\Theta_{1,j}$ at $z=a$ are obtained. See
Remark \ref{rem4.4Anew}, Remark \ref{rem4.4BBnew}, and Proposition
\ref{propHbbnew}.
\item[(v)]
 Given $s_0$ and a sequence of  DSM parameters $\mb_j$, $\lb_j$,
 we obtain
 a sequence $(s_l)_{l=0}^{2j+1}$ (resp. $(-ab s_l+(a+b)s_{l+1}-s_{l+2})_{l=0}^{2j-2}$)
 such that $K_{1,j}$  (resp. $H_{2,j-1}$)
  is a positive definite matrix. See Proposition \ref{propHnew}.
\end{itemize}
To the author's knowledge,  the results
 contained in (i) through (v) are also new in the scalar case. In
 Section \ref{secscalar},
 the scalar version of the DSM parameters $\mb_j$ and $\lb_j$
 via determinants  is indicated.

  In comparison to the DSM parameters $M_k$ and $L_k$ \cite{abdonDS1}, the
  new DSM parameters $\mb_j$
  and  $\lb_j$ depend on both terminal points of the interval $[a,b]$.
 Other DSM parameters
 which also depend on $a$ and $b$ were introduced in \cite{abEVEN}.
 In turn the mentioned parameters are different from the ones studied in
 \cite{abdonDS1} (also in \cite{abODD}), where the parameters depend
 only on $a$.
 In Remark \ref{remlimitab} by setting $b\to+\infty$ and  $a=0$
 in the DSM parameters $\mb_j$ and $\lb_j$,
 we obtain the Dyukarev--Stieltjes parameters
 of the TSMM  problem \cite{dyu0}.

Throughout  the paper we decisively use the forms (\ref{uujd}) and
(\ref{uujdeven}) of the RM of the THMM problem obtained in
\cite{abKN} where the elements of the RM are given with the help of
four orthogonal polynomials and their second kind polynomials.
 Orthogonal matrix polynomials (OMP) were first considered by M.G. Krein in~1949~\cite{k1949},
\cite{k1949a}. Further investigations of OMP were made by
 J.S. Geronimo \cite{gero}, I.V.
Kovalishina~\cite{kov0}, \cite{kov1},  H. Dym \cite{dym1}, B. Simon
\cite{simon1}, Damanik/\-Pushnitski/\-Simon~\cite{simon}  and the
references therein. See also \cite{duranker}, \cite{duran},
\cite{duran1}, \cite{duran2}, \cite{duran3}, \cite{fkm},
\cite{dette2}, \cite{miran}, \cite{abdyu1} and \cite{abmad}.

 A brief outline of the relationships between the DSM parameters
  $\mb_j$, $\lb_j$
 and the polynomials $P_{2,j}, Q_{2,j}, \Gamma_{1,j}, \Theta_{1,j}$
 as well as the matrices $H_{2,j-1}, K_{1,j}, \widehat H_{1,j-1}, \widehat
 K_{1,j}$ achieved in the present work is given by

  \begin{center}
\begin{tikzpicture}[>=stealth,every node/.style
={shape=rectangle,draw,rounded corners},]
    \node (c4) {$s_0$,\, $\mb_j(a,b)$, $\lb_j(a,b)$};
    \node (c5) [below left=of c4]{$H_{2,j-1}$,\, $K_{1,j}$};
    \node (c6) [right =of c5]{$P_{2,j}(a), Q_{2,j}(a), \Gamma_{1,j}(a),
     \Theta_{1,j}(a)$};
    \node (c7) [right =of c6]{$\widehat H_{2,j-1}$,\, $\widehat K_{1,j}$.};
    \draw[<->] (c4) -- (c5);
    \draw[<->] (c4) -- (c6);
    \draw[<->] (c4.south east) -- (c7);
\end{tikzpicture}
\end{center}

Here $A \longleftrightarrow B$ means both sides posses an explicit
interrelation between $A$ and $B$.

 The mentioned relationships complete in some sense the ones
obtained in \cite{abdonDS1}:

\begin{center}
\begin{tikzpicture}[>=stealth,every node/.style
={shape=rectangle,draw,rounded corners},]
    \node (c4) {$M_j(a)$, $L_j(a)$};
    \node (c5) [below left=of c4]{$H_{1,j}$,\, $K_{2,j}$};
    \node (c6) [right =of c5]{$P_{1,j}(a), Q_{1,j}(a), \Gamma_{2,j}(a),
     \Theta_{2,j}(a)$};
    \node (c7) [right =of c6]{$\widehat H_{1,j}$,\, $\widehat K_{2,j}$.};
    \draw[<->] (c4) -- (c5);
    \draw[<->] (c4) -- (c6);
    \draw[<->] (c4.south east) -- (c7);
\end{tikzpicture}
 \end{center}

\vskip1mm

 An application of
 the Hausdorff moment problem method, in particular of the solution
 set (\ref{0002a}) with (\ref{uujd}), (\ref{uujdeven})
 in the scalar case is used in \cite{ieeeA1} to solve the  admissible bounded control
 problem of the Brunovsky  control system of dimension $m$ for
 $m\geq2$ via orthogonal polynomials on  $[0,T]$ and their second
 kind polynomials. See also \cite{optimo} and \cite{cks2010}.
\section{Notations and preliminaries}\label{sec002}
 In this section we include the main notations and objects which we
 use throughout the paper. The auxiliary RM  $\widehat U_2^{(2j)}$ is
 introduced which is used instead of the auxiliary RM
 $\widetilde U_2^{(2j)}$, previously defined in
 \cite[Formula (6.2)]{abdon2}.

 The orthogonal matrix polynomials $P_{k,j}$, $\Gamma_{k,j}$ on $[a,b]$
   as well as their second kind polynomials $Q_{k,j}$, $\Theta_{k,j}$
  are recalled. The mentioned matrix polynomials together with the
  connection between the auxiliary RM $\widetilde U_2^{(2j+1)}$,
  $\widehat U_2^{(2j)}$ and the RM $U^{(m)}$ play an
  important role in the present work.
 \subsection{Auxiliary matrices and auxiliary resolvent matrices}
 Let $R_{j}:\mathbb{C}\to
\mathbb{C}^{(j+1)q\times (j+1)q}$
 be given by
\begin{align}
\label{52}R_{j}(z)&:=(I_{(j+1)q}-zT_{j})^{-1},\quad j\geq 0,
\end{align}
with
$ T_{0}:=0_{q}, \ \
 T_{j}:=\left( \begin{array}{cc}
0_{q\times jq} & 0_{q}\\
I_{jq} & 0_{jq\times q}\\
                \end{array}
         \right),\ \ j\geq 1.
 $

 Let
\begin{equation}
 v_{0}:=I_q,\quad
  v_{j}:=\left( \begin{array}{c}
I_q\\
0_{jq\times q}\\
\end{array}\right)=\left( \begin{array}{c}
v_{j-1}\\
0_{q}\\
\end{array}\right),
\quad\forall j\geq0. \label{59}
\end{equation}
Furthermore,  let
\begin{eqnarray}
\label{64} y_{[j,k]}&:=&\left(\begin{array}{c}
s_{j}\\
s_{j+1}\\
\vdots\\
s_{k}
\end{array}\right), \, 0\leq j\leq k\leq 2n. 
\end{eqnarray}

 Let
\begin{gather}
 \widetilde u_{1,0}:=s_{0},\, \quad \widetilde
u_{2,0}:=-s_{0}, \nonumber\\
\widetilde u_{1,j}:=y_{[0,j]}-b \left(\begin{array}{c}
                                              0_{q}\\
                                              y_{[0,j-1]}
                                             \end{array}\right),
 \quad
\widetilde u_{2,j}:=-y_{[0,j]}+a \left(\begin{array}{c}
                                              0_{q}\\
                                              y_{[0,j-1]}
                                             \end{array}\right)
  \label{22}
\end{gather}
for every $1\leq j\leq n-1$. In addition, for $1\leq j\leq n$ let
\begin{equation}\label{27}
 \widetilde Y_{1,j}:=b y_{[j,2j-1]}- y_{[j+1,2j]},
\quad \widetilde Y_{2,j}:= -a\,y_{[j,2j-1]}+y_{[j+1,2j]}.
\end{equation}
Let $\widehat K_{1,j}$  (resp. \,$\widehat K_{2,j}$) denote the
Schur complement of the block $bs_{2j}-s_{2j+1}$
(resp.\,$-as_{2j}+s_{2j+1}$) of the matrix $K_{1,j}$ (resp.\,
$K_{2,j}$). In addition, denote
\begin{align}
\widehat K_{1,0}=bs_{0}-s_{1},\, \widehat
K_{1,j}:=&bs_{2j}-s_{2j+1}-\widetilde
Y^{*}_{1,j}K^{-1}_{1,j-1}\widetilde Y_{1,j},\,
        \, 1\leq j\leq n,\label{lkk3}\\
\widehat K_{2,0}=-as_{0}+s_{1}, \,\widehat
K_{2,j}:=&-as_{2j}+s_{2j+1}-\widetilde
Y^{*}_{2,j}K^{-1}_{2,j-1}\widetilde Y_{2,j},\,
        \, 1\leq j \leq n. \label{lkk4}
 \end{align}
The quantities (\ref{lkk3}) and (\ref{lkk4}) have been defined in
\cite{dette2} for $a = 0$ and $b = 1$.

Let
$$
u_{1,0}:=0_{q},\, u_{1,j}:=\left(\begin{array}{c}
0_{q}\\
-y_{[0,j-1]}\\
\end{array}\right),\quad 1\leq j\leq n
$$
and
 \begin{equation}
 u_{2,0}:=-(a+b)s_0+s_1, \quad
u_{2,j}:=\left(\begin{array}{c}
u_{2,0}\\
-\widehat y_{[0,j-2]}\\
\end{array}\right),\, 1\leq j\leq 2n. \label{uuu001}
 \end{equation}
Moreover, let
\begin{equation}
\widehat{s}_{j}:=-abs_{j}+(a+b)s_{j+1}-s_{j+2},\quad 0\leq j\leq
2n-2\label{83}
\end{equation}
and
$$
 \widehat y_{[j,k]}:=\left(\begin{array}{c}
\widehat s_{j}\\
\widehat s_{j+1}\\
\vdots\\
\widehat s_{k}
\end{array}\right), \, 0\leq j\leq k\leq 2n-2.
$$
 Note that by (\ref{64}) and (\ref{83})
$$
 \widehat y_{[j,k]}=-aby_{[j,k]}+(a+b)y_{[j+1,k+1]}-y_{[j+2,k+2]}.
 $$
We also denote
$$
  Y_{1,j}:=y_{[j,2j-1]},\, 1\leq j\leq n, \,
                            \quad Y_{2,j}:=\widehat y_{[j,2j-1]},\, 1\leq j\leq
                            n-1.
$$
 Let $\widehat H_{1,j}$  (resp. \,$\widehat
H_{2,j}$) denote the Schur complement of the block $s_{2j}$
(resp.\,$\widehat s_{2j-2}$) of the matrix $H_{1,j}$ (resp.
$H_{2,j}$):  denote $\widehat H_{1,0}=s_{0}, \ \ \widehat
H_{2,0}=\widehat s_{0}$ and
\begin{align} \widehat
H_{1,j}:=&s_{2j}-Y^{*}_{1,j}H^{-1}_{1,j-1}Y_{1,j},\,
        \, 1\leq j\leq n,\label{lkk}\\
\widehat H_{2,j}:=&\widehat
s_{2j}-Y^{*}_{2,j}H^{-1}_{2,j-1}Y_{2,j},\,
        \, 1\leq j \leq n-1. \label{lkk2}
 \end{align}
The quantities (\ref{lkk}) and (\ref{lkk2}) have been defined in
\cite{dette2} for $a = 0$ and $b = 1$.

\begin{defn} \label{deboth} Let $[a,b]\subset \R$.
 The sequence $(s_k)_{k=0}^{2j}$ (resp. $(s_k)_{k=0}^{2j+1}$)
  is called a Hausdorff positive definite
   sequence  if the block Hankel matrices $H_{1,j}$ and $H_{2,j-1}$
 (resp. $K_{1,j}$ and $K_{2,j}$)  are both positive
 definite matrices.
\end{defn}
In the sequel, we will consider only  Hausdorff positive definite
sequences. In this case the THMM problem is called a non degenerate
THMM problem.

\begin{defn}\cite[Formula (6.2)]{abdon1} \label{rmeven22}
 Let $(s_k)_{k=0}^{2j+1}$ be a Hausdorff positive
  sequence. The $2q\times 2q$ matrix polynomial
\begin{equation}
\widetilde U_2^{(2j+1)}(z,a,b):=\left(\begin{array}{cc}
        \widetilde\alpha_2^{(2j+1)}(z) & \widetilde\beta_2^{(2j+1)}(z)\\
        \widetilde \gamma_2^{(2j+1)} (z)& \widetilde \delta_2^{(2j+1)}(z)
        \end{array}\right),\, z\in {\mathbb C}, \quad 1\leq j\leq n,
        \label{leipRMe22}
            \end{equation}
            with
\begin{align*}
 \widetilde \alpha_2^{(2j+1)}(z,a,b):=& I_q-(z-a)\widetilde u^{*}_{1,j}R_{j}^*(\bar{z})K^{-1}_{1,j}R_{j}(a)v_{j},
 \\
\widetilde \beta_2^{(2j+1)}(z,a,b):=&(z-a)\widetilde
u^{*}_{1,j}R_{j}^*(\bar{z})K^{-1}_{1,j}R_{j}(a)\widetilde u_{1,j},
\\
\widetilde \gamma_2^{(2j+1)}(z,a,b):=
&-(z-a)v^{*}_{j}R_{j}^*(\bar{z})K^{-1}_{1,j}R_{j}(a)v_{j},
  \end{align*}
  and
  \begin{align*}
\widetilde \delta_2^{(2j+1)}(z,a,b):=&
 I_q+ (z-a)v^{*}_{j}R_{j}^*(\bar{z})K^{-1}_{1,j}R_{j}(a)
 \widetilde u_{1,j}
 \end{align*}
 is called the second auxiliary matrix of the THMM problem in the case of an even number
 of moments.
\end{defn}
Let
      \begin{align*}
       B_{2,j}:=
       (b-a)\widetilde u_{2,j}^*R_j^*(a)K_{2,j}^{-1}
       R_j(a)\widetilde u_{2,j} 
       \end{align*}
       and
       \begin{align}
       A_2^{(2j+1)}:=&\left(
                    \begin{array}{cc}
                      I_q & B_{2,j} \\
                      0_q & I_q \\
                    \end{array}
                  \right). \label{leipB12nm1}
 \end{align}
 In \cite{abdon1}, the following equality was proved:
 $$
  U^{(2j+1)}=\left(
               \begin{array}{cc}
                 \frac{1}{b-z}I_q & 0_q \\
                 0_q & I_q \\
               \end{array}
             \right)\widetilde U_2^{(2j+1)}A_2^{(2j+1)}
             \left(
               \begin{array}{cc}
                 (b-z)I_q & 0_q \\
                 0_q & I_q \\
               \end{array}
             \right). 
$$
\begin{defn}\cite[Formula (6.2)]{abdon2} Let $(s_k)_{k=0}^{2j}$
 be a Hausdorff positive sequence. The $2q\times 2q$ matrix polynomial
\begin{equation}
\widetilde U_2^{(2j)}(z,a,b):=\left(\begin{array}{cc}
        \widetilde\alpha_2^{(2j)}(z,a,b) &
         \widetilde\beta_2^{(2j)}(z,a,b)\\
        \widetilde\gamma_2^{(2j)} (z,a,b)&
        \widetilde\delta_2^{(2j)}(z,a,b)
        \end{array}\right),\, z\in {\mathbb C}, \quad 0\leq j\leq n-1,
        \label{RM100}
            \end{equation}
            with
\begin{align*}
 \widetilde\alpha_2^{(2j)}(z,a,b):=& I_q-(z-a)u^{*}_{2,j}R_{j}^*(\bar z) H^{-1}_{2,j}R_{j}(a)v_{j},
\\
\widetilde\beta_2^{(2j)}(z,a,b):=& (z-a)u^{*}_{2,j}R_{j}^*(\bar z)
H^{-1}_{2,j}R_{j}(a)u_{2,j}, \\
\widetilde\gamma_2^{(2j)}(z,a,b):=&-(z-a)v^{*}_{j}R_{j}^*(\bar z)
H^{-1}_{2,j}R_{j}(a)v_{j}
\end{align*}\ 
and
 \begin{align*}
\widetilde\delta_2^{(2j)}(z,a,b):=&I_q+(z-a)v^{*}_{j}R_{j}^*(\bar
z)H^{-1}_{2,j}R_{j}(a)u_{2,j},
\end{align*}
 is called the
  second auxiliary matrix of the THMM problem in the
 case of an odd number of moments.
\end{defn}

\begin{defn} Let $(s_k)_{k=0}^{2j}$ be an odd Hausdorff positive
sequence. The $2q\times 2q$ matrix polynomial
\begin{equation}
\widehat U_2^{(2j)}(z,a,b):=\left(\begin{array}{cc}
        \widehat\alpha_2^{(2j)}(z,a,b) &
         \widehat\beta_2^{(2j)}(z,a,b)\\
        \widehat\gamma_2^{(2j)} (z,a,b)&
        \widehat\delta_2^{(2j)}(z,a,b)
        \end{array}\right),\, z\in {\mathbb C}, \quad 0\leq j\leq n-1,
        \label{RM1000}
            \end{equation}
            with
\begin{align*}
 \widehat\alpha_2^{(2j)}(z,a,b):=
 & I_q-(z-a)(u^{*}_{2,j}+zs_0 v_j^*)R_{j}^*(\bar z) H^{-1}_{2,j}R_{j}(a)v_{j}, 
\\
\widehat\beta_2^{(2j)}(z,a,b):=&
(z-a)(s_0+(u^{*}_{2,j}+zs_0v_j^*)R_{j}^*(\bar z)
H^{-1}_{2,j}R_{j}(a)(u_{2,j}+av_js_0)),\\
\widehat\gamma_2^{(2j)}(z,a,b):=&-(z-a)v^{*}_{j}R_{j}^*(\bar z)
H^{-1}_{2,j}R_{j}(a)v_{j}
 \end{align*}
 and
 \begin{align*}
\widehat\delta_2^{(2j)}(z,a,b):=&
 I_q+(z-a)v^{*}_{j}R_{j}^*(\bar z)H^{-1}_{2,j}R_{j}(a)(u_{2,j}+a v_j s_0)
\end{align*}
 is called the second transformed auxiliary matrix of the THMM
  problem in the case of an odd number of moments. The adjective
  transformed in the sequel will be omitted.
\end{defn}
Let
 \begin{equation}
N_{2,j}:=-(b-a)^{-1}v_j^*R_j^*(a)H_{1,j}^{-1}R_j(a)v_j
\label{lei6.15}
\end{equation}
and
\begin{equation}
A_2^{(2j)}:=\left(
       \begin{array}{cc}
         I_q & -as_0 \\
         0_q & I_q \\
       \end{array}
     \right)\left(
       \begin{array}{cc}
         I_q & 0_q \\
         N_{2,j} & I_q \\
       \end{array}
     \right).
 \label{lei6.16}
\end{equation}
\begin{rem}\label{rem00140} Let $(s_j)_{j=0}^{2n}$ be a
Hausdorff positive sequence, and let $U^{(2j)}$, $\widetilde
U_2^{(2j)}$ and $\widehat U_2^{(2j)}$ be as in (\ref{uujd}),
(\ref{RM100}) and (\ref{RM1000}).
 The following equalities are valid:
 \\
 a)
 \begin{equation}
 \widehat U_2^{(2j)}(z)=\left(
                                     \begin{array}{cc}
                                       I_q & zs_0 \\
                                       0_q & I_q \\
                                     \end{array}
                                   \right)\widetilde U_2^{(2j)}(z)\left(
                                     \begin{array}{cc}
                                       I_q & -as_0 \\
                                       0_q & I_q \\
                                     \end{array}
                                   \right) \label{hUh1}
 \end{equation}
 and
 b) \begin{align}
 U^{(2j)}(z)
 =\left(
               \begin{array}{cc}
                 \frac{1}{(z-a)(b-z)}I_q & 0_q \\
                 0_q & I_q \\
               \end{array}
             \right)\widehat U_2^{(2j-2)}(z)\left(
       \begin{array}{cc}
         I_q & 0_q \\
         N_{2,j} & I_q \\
       \end{array}
     \right)\left(
                                   \begin{array}{cc}
                                     (z-a)(b-z)I_q & 0_q \\
                                     0_q & \frac{b-z}{b-a}I_q \\
                                   \end{array}
                                 \right). \label{eq290z}
\end{align}
\end{rem}
\begin{proof} Equalities (\ref{hUh1}) and  (\ref{eq290z}) readily
follow by direct calculations.\end{proof}
\subsection{Orthogonal matrix polynomials on $[a,b]$}
  Let us reproduce some notions on OMP which were introduced in
  \cite{abmad}.
 Let $P$ be a complex $p \times q$~matrix polynomial.
 For all $n\in\N_0$, let
  $$
    Z^{[P]}_{n}
    :=[A_0,A_1,\dotsc,A_n],
$$
where $(A_j)_{j=0}^\infty$ is the unique sequence of complex $p
\times q$~matrices such that for all $z\in\C$ the polynomial $P$
admits the representation $P(z)=\sum_{j=0}^\infty z^jA_j$.
Furthermore, we denote by ${\rm deg}\, P:=\sup\{j\in\N_0:
A_j\neq0_{p\times q}\}$ the \emph{degree of $P$}. Observe that in
the case $P(z)=0_{p\times q}$ for all $z\in\C$ we have thus ${\rm
deg}\, P=-\infty$. If $k:={\rm deg}\, P\geq0$, we refer to $A_k$ as
the \emph{leading coefficient of $P$}.
For all $k\in \N_0$ and all $\kappa\in \N_0$ with $k\leq \kappa$,
let ${\mathbb Z}_{k,\kappa}:=\{n\in \N_0, k\leq n\leq \kappa\}$.
 \begin{defn}\label{def3.2}
    Let $\kappa\in\N_0\cup\{\infty\}$, and let $(s_j)_{j=0}^{2\kappa}$
    be a sequence of complex $q\times q$ matrices.
    A sequence $(P_k)_{k=0}^\kappa$ of complex $q\times q$~matrix polynomials
     is called a \emph{monic left orthogonal
     system of matrix polynomials with respect to
     $(s_j)_{j=0}^{2\kappa}$} if the following three conditions
     are fulfilled:
    \begin{itemize}
        \item[(I)]\label{MLOS.I} $\deg P_k
=k$ for all $k\in {\mathbb Z}_{0,\kappa}$;
        \item[(II)]\label{MLOS.II} $P_k$ has the leading coefficient
        $I_q$ for all $k\in{\mathbb Z}_{0,\kappa}$;
        \item[(III)]\label{MLOS.III} $Z_n^{[P_j]}H_n(Z_n^{[P_k]})^*=
        0_{q\times q}$ for all
         $j,k\in{\mathbb Z}_{0,\kappa}$ with $j\neq k$, where $n:=\max\{j,k\}$.
    \end{itemize}
\end{defn}
\begin{rem} \cite[Remark 3.6]{abmad} Let $n\in \N_0\cup
\{\infty\}$, and let $(s_j)_{j=0}^{2n}$ be a Hausdorff positive
definite sequence, i.e., the corresponding Hankel block matrix
$H_{n}$ is positive definite. Denote by $(P_k)_{k=0}^n$ the monic
left orthogonal system of matrix polynomials with respect to
$(s_j)_{j=0}^{2n}$.
 Let $\sigma$ be a nonnegative Hermitian $q\times q$ measure on $\R$
  satisfying $s_j=\int_{[a,b]}t^j d\sigma(t)$ for $0\leq j\leq2n$.
  Thus,
 \begin{align*}
  \int_{[a,b]}P_jd\sigma P_k^*=\left\{
  \begin{array}{cc}
    \widehat H_j, & \mbox{if}\ \ j=k, \\
    0_q, & \mbox{if}\ \ j\neq k
     \\
  \end{array}
  \right. 
 \end{align*}
 for all $0\leq j,k\leq n$
where $\widehat H_j$ denotes the Schur complement of $H_{j-1}$ in
$H_j$; see (\ref{lkk}).
\end{rem}
\begin{defn} \label{de002}
Let $(s_k)_{k=0}^{2j}$ be a Hausdorff positive definite sequence.
Let
\begin{align}
P_{1,0}(z)&:= I_q, \ \  P_{2,0}(z):=I_q, \ \ Q_{1,0}(z):=0_{q}, \ \
Q_{2,0}(z,a,b):=-(u_{2,0}+z\,s_0), \nonumber 
\\
P_{1,j}(z)&:=(-Y^{*}_{1,j}H^{-1}_{1,j-1},I_q)R_{j}(z)v_{j}, \quad
1\leq j\leq n, \label{92}
\\
P_{2,j}(z,a,b)&:=(-Y^{*}_{2,j}H^{-1}_{2,j-1},I_q)R_{j}(z)v_{j},\quad
1\leq j\leq n-1,\label{92aa}
\\
 Q_{1,j}(z)&:=-(-Y^{*}_{1,j}H^{-1}_{1,j-1},I_q)R_{1,j}(z)
u_{1,j},\quad 1\leq j\leq n \nonumber
 \end{align}
 and
 \begin{align}
Q_{2,j}(z,a,b):=-(-Y^{*}_{2,j}H^{-1}_{2,j-1},I_q)R_{j}(z)
(u_{2,j}+zv_{j}s_{0}), \quad 1\leq j\leq n-1. \label{93aa}
\end{align}
\end{defn}
\begin{defn} \label{def001} Let $K_{k,j}$, $\widetilde u_{k,j}$,
 $\widetilde Y_{k,j}$, for
$k=1,2$, $R_j$ and $v_j$ be as in (\ref{2nmas1}), (\ref{22}),
(\ref{27}), (\ref{22}), (\ref{27}),  (\ref{52}) and (\ref{59}),
respectively.

 Let $(s_k)_{k=0}^{2j+1}$ be a  Hausdorff  positive definite sequence.
Let
\begin{align*}
\Gamma_{1,0}(z)&:=I_q, \, \Gamma_{2,0}(z):=I_q, \ \
\Theta_{1,0}(z):=s_0,\, \Theta_{2,0}(z):=-s_0 
\end{align*}
for all $z\in \C$.
 For $k\in \{1,2\}$ and $1\leq j\leq n$,
 define
\begin{align}
\Gamma_{1,j}(z,b) &:=(-\widetilde
Y^{*}_{1,j}K^{-1}_{1,j-1},I_q)R_{j}(z)v_{j},\label{38-1}
\\
 \Gamma_{2,j}(z,a) &:=(-\widetilde Y^{*}_{2,j}K^{-1}_{2,j-1},I_q)
 R_{j}(z)v_{j},\label{38-2}\\
 \Theta_{1,j}(z,b) &:= (-\widetilde Y^{*}_{1,j}K^{-1}_{1,j-1},I_q)
 R_{j}(z)\widetilde u_{1,j}\label{39-1}
 \end{align}
 and
 \begin{align}
 \Theta_{2,j}(z,a)&:= (-\widetilde Y^{*}_{2,j}K^{-1}_{2,j-1},I_q)
 R_{j}(z)\widetilde u_{2,j}, \label{39-2}
\end{align}
for all $z\in \C$.
\end{defn}
We  usually omit the dependence of the polynomials $P_{k,j}$,
$Q_{k,j}$, $\Gamma_{k,j}$ and $\Theta_{k,j}$ for $k=1,2$ on the
parameters $a$ and $b$.

In \cite{abpol}, (resp. \cite{thi}) it was proved that polynomials
$P_{k,j}$ (resp. $\Gamma_{k,j}$) for $k=1,2$ are in fact OMP on
$[a,b]$. In \cite{abKN} some properties of second kind polynomials
$Q_{k,j}$ and $\Theta_{k,j}$ for $k=1,2$ were discussed. In
\cite{abmad} explicit interrelations between $P_{k,j}$,
$\Gamma_{k,j}$ and their second kind polynomials were studied.

For the sake of completeness in the following Remark, we reproduce
 explicit interrelations between the matrices
  $\widehat H_{k,j}$, $ \widehat K_{k,j}$
  and the polynomials $P_{1,j}$, $Q_{2,j}$, $\Gamma_{1,j}$, $\Theta_{2,j}$
 considered in \cite[Corollary 3.4]{abKN} and \cite[Corollary 3.10]{abKN}.
 \begin{rem} Let $\widehat H_{k,j}$, $ \widehat K_{k,j}$, for
 $k=1,2$, $P_{1,j}$, $Q_{2,j}$, $\Gamma_{1,j}$ and $\Theta_{2,j}$
 be as in (\ref{lkk}), (\ref{lkk2}), (\ref{lkk3}), (\ref{lkk4})
and  Definitions \ref{de002} and \ref{def001}, respectively.
 The  following equalities then hold:
\begin{align}
  \widehat H_{1,j}=&-P_{1,j}(a)\Theta_{2,j}^*(a), \ \ 
  \widehat H_{2,j}=-Q_{2,j}(a)\Gamma_{1,j+1}^{*}(a),\label{pqHHa}\\
  \widehat K_{1,j}=&\Gamma_{1,j}(a)Q_{2,j}^*(a), \ \ 
  \widehat K_{2,j}=\Theta_{2,j}(a)P_{1,j+1}^{*}(a). \label{pqHHa11}
\end{align}
 \end{rem}
\section{Algebraic identities}
 In this section we will single out essential identities concerning
  the block matrices introduced in Section \ref{sec002}:
\begin{gather*}
 L_{1,n}:=\left(
          \delta_{j,k+1}I_q
        \right)_{{\tiny\begin{array}{c}
                   j=0,\ldots,n \\
                   k=0,\ldots, n-1
                 \end{array}}
        }\ \ \ \mbox{and} \ \ \
        L_{2,n}:=\left(
          \delta_{j,k}I_q
        \right)_{{\tiny\begin{array}{c}
                   j=0,\ldots,n \\
                   k=0,\ldots, n-1
                 \end{array}}
        },
\end{gather*}
 where $\delta_{j,k}$ is the Kronecker symbol with $\delta_{j,k}:=1$
if $j=k$ and $\delta_{j,k}:=0$ if $j\neq k$.

 Let
\begin{equation}
 \Xi_{1,j}^K:=\left( \begin{matrix}
              -K_{1,j-1}^{-1} \widetilde{Y}_{1,j} \\ I_q \end{matrix}
              \right).\label{kk12aa}
 \end{equation}
\begin{rem}\label{leiprem001}
The following identities are valid:
 \begin{align}
  &v_{j-1}-L_{2,j}^*v_j=0, \label{leivj01}\\
  &\widetilde u_{1,j-1}-L_{2,j}^*\widetilde u_{1,j}=0, \label{leitu1j01}
   \\
  &L_{2,j}-R_j^{*^{-1}}(\bar z)L_{2,j}R_{j-1}^{*^{-1}}(\bar z)=0,\label{leiLRL2}\\
  &L_{2,j}L_{1,j}^*-T_j^*=0, \label{leiLLTj}
 \\
  &H_{1,j}T_j^*-T_jH_{1,j}-\widetilde u_{1,j}v_j^*+v_j\widetilde
  u_{1,j}=0 \label{leiA2.2}\\
  &R_{j-1}^{-1}(a)K_{1,j-1}L_{1,j}^*
  +L_{2,j}^*K_{1,j}T_j^*-L_{2,j}^*T_jK_{1,j}+L_{2,j}^*T_jK_{2,j}R_j^{*^{-1}}(a)=0,
   \label{leipKLLK1}\\
   &T_jK_{1,j}\Xi_{1,j}^K=0. \label{leiK100K}
 \end{align}
\end{rem}
\begin{proof}
 Equalities  (\ref{leivj01}), (\ref{leitu1j01}), (\ref{leiLRL2}), (\ref{leiLLTj})
 are proved by direct calculations. Identity (\ref{leiA2.2}) was
 considered in \cite[Proposition 2.1]{abdon1}. Identities
 (\ref{leipKLLK1}) and (\ref{leiK100K})
 follow by a straightforward calculation.
\end{proof}
\begin{rem}\label{leiprem002q}
The following identities are valid:
 \begin{align}
 &u_{1,j}^*+v_j^*H_{1,j}T_j^*=0,\label{leiHH1}\\
 &v_{j}^*H_{1,j}-v_{j+1}^*H_{1,j+1}L_{2,j+1}=0, \label{leiHH11}\\
 &T_{j+1}^*L_{1,j+1}+(T_{j+1}T_{j+1}^*-I)L_{2,j+1}-L_{2,j+1}L_{1,j}L_{1,j}^*=0,
 \label{leiHH22}
 \\
&T_{j+1}^*L_{1,j+1}T_j^*-T_{j+1}^*L_{2,j+1}L_{1,j}L_{1,j}^*=0,
 \label{leiHH33}
 \\
 &T_{j+1}^*L_{2,j+1}-L_{2,j+1}L_{2,j}L_{1,j}^*=0,
 \label{leiHH44}
 \\
 &T_{j+1}^*L_{2,j+1}T_j^*-T_{j+1}^*L_{2,j+1}L_{2,j}L_{1,j}^*=0,
 \label{leiHH55}
 \\
 &(I-zT_{j+1}^*)L_{2,j+1}(T_jT_j^*-I)-(T_{j+1}T_{j+1}^*-I)L_{2,j+1}
 =0,\label{leiHH2}\\
&(I-z T_{j+1}^*)L_{2,j+1}\left(I+(z-a)T_j^*R_j^*(\bar z)\right)
-(I-aT_{j+1}^*)L_{2,j+1}=0,\label{leiHH3}
\\
 & v_jv_{j+2}^*H_{1,j+2}L_{1,j+2}+L_{2,j+1}^*H_{1,j+1}
+L_{1,j+1}^*T_{j+1}H_{1,j+1}=0,\label{leiHH41}
 \\
 &
v_jv_{j+2}^*H_{1,j+2}L_{2,j+2} +L_{2,j+1}^*T_{j+1}\widetilde
H_{2,j+1}-L_{1,j+1}^* \widetilde H_{1,j+1}=0,\label{leiHH42}
 \\
 & -T_jL_{1,j+1}^*H_{1,j+1}+L_{2,j+1}T_{1,j+1}\widetilde H_{1,j+1}+
  T_jL_{2,j+1}^*T_{j+1}\widetilde H_{2,j+1}=0, \label{lieHH43}\\
  & -L_{2,j+1}T_{j+1}\widetilde H_{0,j+1}+ T_j
  L_{2,j+1}^*H_{1,j+1}=0. \label{liehh44}
 \end{align}
\end{rem}
\begin{proof}
 Identities (\ref{leiHH1})--(\ref{liehh44})
  follow by a straightforward calculation.
\end{proof}
Denote
 \begin{equation}
 \Xi_{2,j}^H:=\left( \begin{array}{c}
              -H_{2,j-1}^{-1} Y_{2,j} \\ I_q
              \end{array}
              \right).\label{hh2n12}
 \end{equation}
\begin{prop} The following identities are valid:
\begin{align}
&-T_{j+1}^*(L_{1,j+1}-bL_{2,j+1})-(T_{j+1}T_{j+1}^*-I)L_{2,j+1}R_j^*(a)
\nonumber\\
&+(I-a
T_{j+1}^*)L_{2,j+1}(L_{1,j}-bL_{2,j})L_{1,j}^*R_j^*(a)=0,\label{lieHH4}\\
&-R_j(a)v_jv_{j+2}^*H_{1,j+2}(L_{1,j+2}-bL_{2,j+2})
+(L_{1,j+1}^*-bL_{2,j+1}^*)
\nonumber\\
&+L_{2,j+1}R_{j+1}(a)T_{j+1}H_{2,j+1}=0,\label{leiHH5}\\
   &T_jH_{2,j}\Xi_{2,j}^H=0. \label{leiH100H}
\end{align}
\end{prop}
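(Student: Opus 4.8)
The plan is to prove the three identities \eqref{lieHH4}, \eqref{leiHH5}, \eqref{leiH100H} by reducing each to a finite set of block-entry verifications, exactly in the spirit of the ``follow by a straightforward calculation'' remarks that precede this proposition. The underlying objects $T_j$, $L_{1,j}$, $L_{2,j}$, $v_j$ are all explicit shift-type and selection-type block matrices, and $H_{1,j}$, $H_{2,j}$, $R_j(a)$ are built from the moment sequence; so every displayed product is a block matrix whose $(p,q)$ entry is a fixed linear combination of the $s_k$'s (and of $a$, $b$), and the claim in each case is that this combination vanishes identically.

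\textbf{Identity \eqref{leiH100H}.} This is the cleanest one and I would dispatch it first. By the definition \eqref{hh2n12} of $\Xi_{2,j}^H$, the product $H_{2,j}\Xi_{2,j}^H$ is the last block-column of $H_{2,j}$ minus $\left(\begin{smallmatrix} Y_{2,j}\\ \widehat s_{2j}\end{smallmatrix}\right)$ adjusted by $H_{2,j-1}^{-1}Y_{2,j}$; more precisely, using the standard block decomposition $H_{2,j}=\left(\begin{smallmatrix} H_{2,j-1} & Y_{2,j}\\ Y_{2,j}^* & \widehat s_{2j-2}+\cdots\end{smallmatrix}\right)$ one gets $H_{2,j}\Xi_{2,j}^H=\left(\begin{smallmatrix} 0_{jq\times q}\\ \widehat H_{2,j}\end{smallmatrix}\right)$ with $\widehat H_{2,j}$ the Schur complement from \eqref{lkk2}. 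Then $T_j$ annihilates the last block-row (it shifts down and drops the bottom), so $T_j H_{2,j}\Xi_{2,j}^H=0$. The same mechanism already produced \eqref{leiK100K} and \eqref{leiK100K}'s $H$-analogue, so I would just cite that pattern. The key input is the block structure of $H_{2,j}$ together with $T_j\left(\begin{smallmatrix} 0_{jq\times q}\\ X\end{smallmatrix}\right)=0$.

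\textbf{Identities \eqref{lieHH4} and \eqref{leiHH5}.} These are the substantive ones. For \eqref{lieHH4}, I would first expand $L_{1,j+1}-bL_{2,j+1}$ and $L_{1,j}-bL_{2,j}$ using the defining Kronecker-delta forms, and note that $R_j^*(a)=(I-aT_j^*)^{-1}$, so multiplying through by $(I-aT_{j+1}^*)^{-1}$ or by $R_j^*(a)^{-1}=I-aT_j^*$ on the appropriate side removes the resolvent and leaves a polynomial-in-$T$ identity. At that point the identity should follow from the already-established shift relations collected in Remark \ref{leiprem002q}: in particular \eqref{leiHH22}, \eqref{leiHH3}, and \eqref{leiHH2} are precisely the $b$-free prototypes, and \eqref{lieHH4} is obtained by taking the combination (first row) $-$ $b\times$(second row) of the pair \{\eqref{leiHH22}, \eqref{leiHH44}\} and then conjugating by $R_j^*(a)$ via \eqref{leiHH3}. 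Concretely I would write $L_{1,\bullet}-bL_{2,\bullet}$ as a single matrix, substitute, and check that every term cancels against an instance of \eqref{leiHH22}/\eqref{leiHH44}/\eqref{leiHH55} after inserting $R_j^*(a)$ through \eqref{leiHH3}. For \eqref{leiHH5} the same strategy applies but now using the Hankel relations \eqref{leiHH41}, \eqref{leiHH42}, \eqref{lieHH43}, \eqref{liehh44}, which already encode how $H_{1,\bullet}$, $\widetilde H_{1,\bullet}$, $\widetilde H_{2,\bullet}$ interact with the shifts; the combination ``$-b\times$\eqref{liehh44} added to \eqref{leiHH41}'' produces the $(L_{1,j+2}-bL_{2,j+2})$ and $(L_{1,j+1}^*-bL_{2,j+1}^*)$ blocks, and \eqref{leiHH41} together with \eqref{lieHH43} supplies the $T_{j+1}H_{2,j+1}$ term after multiplying by $R_{j+1}(a)$.

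The main obstacle I anticipate is purely bookkeeping: keeping the index shifts ($j$ versus $j+1$ versus $j+2$), the left/right placement of $R_j(a)$ versus $R_{j+1}(a)$, and the transposes consistent while assembling the right linear combination of the prototype identities from Remark \ref{leiprem002q}. There is no conceptual difficulty — each identity is linear in the $s_k$ and the verification is finite-dimensional — but because $H_{1,\bullet}$ and $H_{2,\bullet}$ are related by the nontrivial formula \eqref{83}, \eqref{63}, one must be careful that the $\widehat s_j = -ab\, s_j+(a+b)s_{j+1}-s_{j+2}$ substitution is applied uniformly; a sign error there would be easy to make. My expectation is that, as the authors indicate, once the block structures are written out explicitly all three identities ``follow by a straightforward calculation,'' and the write-up would simply record that they are obtained by the same elementary computations used for \eqref{leiHH22}--\eqref{liehh44}, multiplied on the appropriate side by $R_j(a)$ or $R_{j+1}(a)$ and combined with coefficient $1$ and $-b$.
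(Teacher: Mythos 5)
Your proposal is correct and follows essentially the same route as the paper: the first two identities are obtained as linear combinations (with coefficients $1$ and $-b$) of the prototype shift identities \eqref{leiHH22}--\eqref{leiHH55} and the Hankel identities \eqref{leiHH41}--\eqref{liehh44}, with the resolvent $R_j^*(a)$ inserted via relations of the type \eqref{leiHH2}, \eqref{leiHH3}, while \eqref{leiH100H} is verified by the explicit block decomposition of $T_j$, $H_{2,j}$ and $\Xi_{2,j}^H$ (the paper computes $T_jH_{2,j}\Xi_{2,j}^H$ directly and gets the zero column, which is exactly your observation that $H_{2,j}\Xi_{2,j}^H$ has only its last block nonzero and $T_j$ kills it). The only minor omission is that the cancellation for \eqref{lieHH4} also uses \eqref{leiHH33} alongside \eqref{leiHH22}, \eqref{leiHH44} and \eqref{leiHH55}, but this is within the level of detail the paper itself provides.
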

\begin{proof}
 Identity (\ref{leiHH44}) follows from (\ref{leiHH22})--(\ref{leiHH55}).
 Identity (\ref{lieHH4})  follows  (\ref{leiHH41})--(\ref{leiHH44}).
 We prove equality (\ref{leiH100H}). Let $\lambda_j:=(0_q,0_q,\ldots,0_q, I_q)$
 be a $q\times j q$ matrix. Thus $T_j=\left(
                                        \begin{array}{cc}
                                          T_{j-1} & 0_{jq}\times q \\
                                          \lambda_j & 0_q \\
                                        \end{array}
                                      \right)$.
 By using the last equality and equality $H_{2,j}=\left(
                                                  \begin{array}{cc}
                                                    H_{2,j-1} & Y_{2,j} \\
                                                    Y_{2,j}^* & \widehat s_{2j} \\
                                                  \end{array}
                                                \right),
  $ we have
  \begin{align*}
  T_jH_{2,j}\Xi_{2,j}^H=&\left(
   \begin{array}{cc}
   T_{j-1} & 0_{jq}\times q \\
   \lambda_j & 0_q \\
  \end{array}
 \right)\left(
 \begin{array}{cc}
  H_{2,j-1} & Y_{2,j} \\
  Y_{2,j}^* & \widehat s_{2j} \\
  \end{array}
  \right)\left(
 \begin{array}{c}
 -H_{2,j-1}^{-1}Y_{2,j} \\I_q \end{array}
  \right)\\
  =
  &\left(
  \begin{array}{cc}
  T_{j-1}H_{2,j-1} & T_{j-1}Y_{2,j}\\
  \lambda_jH_{2,j-1} & \lambda_j Y_{2,j}
  \end{array}\right)
   \left(  \begin{array}{c}
  -H_{2,j-1}^{-1}Y_{2,j} \\   I_q   \end{array}
  \right)=\left(
            \begin{array}{c}
              0_{jq\times q} \\
              0_q
            \end{array}
          \right).
\end{align*}
\end{proof}

\section{The Blaschke--Potapov factors of the auxiliary matrices}
 In this section we obtain a multiplicative representation (\ref{uuu2nm1}),
  (\ref{uuu2n})  of the
 second auxiliary matrices
  $\widetilde U_2^{(2n+1)}$  and $\widehat U_2^{(2n)}$
  via the Blaschke--Potapov factors $d^{(2j+1)}$ and $d^{(2j)}$
   defined in (\ref{leipg0})--(\ref{leipfj}).

Since the matrices $H_{2,j}$ and
 $K_{1,j}$ are positive definite matrices for
$0\leq j\leq
 n-1$ and  $0\leq j\leq
 n$, respectively, their inverses can be written as
 \begin{align}
 H^{-1}_{2,j}=&
                \left(\begin{array}{cc}
          H^{-1}_{2,j-1} & 0_{jq\times q}\\
          0_{q\times jq} & 0_{q}\\
         \end{array}\right)+ \left(\begin{array}{c}
                                 -H^{-1}_{2,j-1}Y_{2,j}\\
                                                 I_q\\
                              \end{array}\right)
\widehat H_{2,j}^{-1}(-Y^{*}_{2,j}H^{-1}_{2,j-1},I_q)\label{32aa}
\end{align}
and
\begin{align}
 K^{-1}_{1,j}=&
                \left(\begin{array}{cc}
          K^{-1}_{1,j-1} & 0_{jq\times q}\\
          0_{q\times jq} & 0_{q\times q}\\
         \end{array}\right)+ \left(\begin{array}{c}
                                 -K^{-1}_{2,j-1}\widetilde Y_{1,j}\\
                                                 I_q\\
                              \end{array}\right)
\widehat K_{1,j}^{-1}(-\widetilde
Y^{*}_{1,j}K^{-1}_{1,j-1},I_q).\label{32aa-2np1}
\end{align}

\begin{prop}\label{leipprop1aa} Let the polynomials $P_{2,j}$ and
$Q_{2,j}$ be as in Definition \ref{de002} and $\widehat H_{2,j}$ be
defined as in (\ref{lkk2}). The block elements of
 the matrix $\widehat U_2^{(2j)}(z)$ defined by  (\ref{RM1000})
 can be written in the form
\begin{align*}
 \widehat \alpha_2^{(2j)}(z)&=
  \widehat \alpha_2^{(2j-2)}(z)+(z-a)  Q_{2,j}^*(\bar z)
  \widehat H_{2,j}^{-1}
   P_{2,j}(a),
\\
\widehat \beta_2^{(2j)}(z)&= \widehat \beta_2^{(2j-2)}(z)+(z-a)
 Q_{2,j}^*(\bar z)\widehat H_{2,j}^{-1}  Q_{2,j}(a),
\\
\widehat \gamma_2^{(2j)}(z)&= \widehat \gamma_2^{(2j-2)}(z)-
(z-a)P_{2,j}^*(\bar z)\widehat H_{2,j}^{-1} P_{2,j}(a)
\end{align*}
and
 $$ 
\widehat \delta_2^{(2j)}(z)= \widehat \delta_2^{(2j-2)}(z)-
(z-a)P_{2,j}^*(\bar z)\widehat H_{2,j}^{-1}  Q_{2,j}(a).
$$
\end{prop}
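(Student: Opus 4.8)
The plan is to expand the four block entries of $\widehat U_2^{(2j)}(z)$ directly from their definitions in (\ref{RM1000}) and to use the rank-one-in-blocks decomposition (\ref{32aa}) of $H_{2,j}^{-1}$ to split each entry into a ``level $j-1$'' part plus a correction term, which we then identify with the claimed $Q_{2,j}$ and $P_{2,j}$ expressions. First I would substitute (\ref{32aa}) into, say, $\widehat\gamma_2^{(2j)}(z)=-(z-a)v_j^*R_j^*(\bar z)H_{2,j}^{-1}R_j(a)v_j$. The first summand of (\ref{32aa}), being block-diagonal with a trailing zero block, contracts against $R_j^*(\bar z)$ and $R_j(a)$ applied to $v_j$; because $v_j=\binom{v_{j-1}}{0_q}$ and $T_j$ is the shift, one checks (this is the elementary bookkeeping, e.g. via (\ref{leivj01}) and the structure of $R_j$) that $v_j^*R_j^*(\bar z)\,\mathrm{diag}(H_{2,j-1}^{-1},0_q)\,R_j(a)v_j = v_{j-1}^*R_{j-1}^*(\bar z)H_{2,j-1}^{-1}R_{j-1}(a)v_{j-1}$, which is exactly $-\frac{1}{z-a}\widehat\gamma_2^{(2j-2)}(z)$. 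The second summand of (\ref{32aa}) is $\Xi_{2,j}^H\,\widehat H_{2,j}^{-1}\,(\Xi_{2,j}^H)^*$ in the notation of (\ref{hh2n12}), and $(-Y_{2,j}^*H_{2,j-1}^{-1},I_q)R_j(a)v_j = P_{2,j}(a)$ by Definition \ref{de002}, formula (\ref{92aa}); likewise its adjoint paired with $R_j^*(\bar z)v_j$ gives $P_{2,j}^*(\bar z)$. Hence the correction term is $-(z-a)P_{2,j}^*(\bar z)\widehat H_{2,j}^{-1}P_{2,j}(a)$, which is the asserted formula for $\widehat\gamma_2^{(2j)}$.

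Next I would run the same computation for the other three entries, the only change being which ``vector'' sits on the left and which on the right of $H_{2,j}^{-1}$. For $\widehat\alpha_2^{(2j)}$ the left factor is $(u_{2,j}^*+zs_0v_j^*)R_j^*(\bar z)$ and the right factor is $R_j(a)v_j$; the identity we need is that $(-Y_{2,j}^*H_{2,j-1}^{-1},I_q)R_j(a)(u_{2,j}+zv_js_0)=-Q_{2,j}(z)$ from (\ref{93aa}), so the adjoint paired on the left yields $-Q_{2,j}^*(\bar z)$, giving the extra term $+(z-a)Q_{2,j}^*(\bar z)\widehat H_{2,j}^{-1}P_{2,j}(a)$; the level-$(j-1)$ part again reconstitutes $\widehat\alpha_2^{(2j-2)}(z)$, once one accounts correctly for the $u_{2,j}=\binom{u_{2,j-1}}{-\widehat s_{\,\cdot}}$ block structure from (\ref{uuu001}) and the shift $T_j$ (this is where one invokes that the first block of $u_{2,j}$ is $u_{2,j-1}$ and that the trailing block is annihilated by the zero block of the first summand of (\ref{32aa})). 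For $\widehat\beta_2^{(2j)}$ both factors are $u$-type: the left gives $-Q_{2,j}^*(\bar z)$ and the right $-Q_{2,j}(a)$, producing $+(z-a)Q_{2,j}^*(\bar z)\widehat H_{2,j}^{-1}Q_{2,j}(a)$, and for $\widehat\delta_2^{(2j)}$ the left is $v$-type and the right is $u$-type, giving $-(z-a)P_{2,j}^*(\bar z)\widehat H_{2,j}^{-1}Q_{2,j}(a)$. In each case the ``$+zs_0v_j^*$'' and ``$+av_js_0$'' adjustments inside $\widehat U_2^{(2j)}$ (versus $\widetilde U_2^{(2j)}$) are precisely what is needed so that the contracted expressions collapse to $Q_{2,j}$ rather than to something only involving $u_{2,j}$; this matches the extra $zv_js_0$ appearing in (\ref{93aa}).

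The main obstacle I anticipate is verifying cleanly that the block-diagonal first summand of (\ref{32aa}), sandwiched between $R_j^*(\bar z)(\cdot)$ on the left and $R_j(a)(\cdot)$ on the right, really does reproduce the degree-$(j-1)$ object and not some shifted or truncated variant. The subtlety is that $R_j(z)=(I-zT_j)^{-1}$ does not simply restrict to $R_{j-1}(z)$ on the top $jq$ coordinates — it does so only after one uses that $v_j$, $u_{2,j}$ have their first blocks equal to $v_{j-1}$, $u_{2,j-1}$ and that the bottom block is killed by the $0_q$ in the corner of the first summand of (\ref{32aa}). I would make this rigorous by writing $T_j=\left(\begin{smallmatrix}T_{j-1}&0\\ \lambda_j&0\end{smallmatrix}\right)$ as in the proof of (\ref{leiH100H}), expanding $(I-zT_j)^{-1}$ in Neumann series (finite, since $T_j$ is nilpotent), and observing that the $0_q$ corner of $\mathrm{diag}(H_{2,j-1}^{-1},0_q)$ discards exactly the terms that would involve $\lambda_j$. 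Once that lemma-level fact is in hand, all four formulas follow by assembling the pieces, and the proposition is proved; I would present only the $\widehat\gamma_2$ computation in detail and note that the remaining three are entirely analogous.
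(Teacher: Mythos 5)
Your proposal is correct and follows essentially the same route as the paper's (very terse) proof: substitute the Schur-complement decomposition (\ref{32aa}) of $H_{2,j}^{-1}$, identify the rank-type correction with $P_{2,j}$ and $Q_{2,j}$ via (\ref{92aa}) and (\ref{93aa}), and use the block structures $v_j=\binom{v_{j-1}}{0_q}$ and $u_{2,j}=\binom{u_{2,j-1}}{-\widehat s_{j-1}}$ to recover the level-$(j-1)$ terms. The only difference is cosmetic: where you propose a Neumann-series argument for the restriction of $R_j$, the paper simply invokes the explicit block-lower-triangular form of $R_j(z)$ in (\ref{rruub}), which settles that point immediately.
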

\begin{proof} Use (\ref{59}), (\ref{32aa})
and
\begin{equation}
R_{j}(z)=\left(
           \begin{array}{c|c}
             R_{j-1}(z) & 0_{jq\times q} \\
             \hline
             (z^j I_q, z^{j-1} I_q,\ldots, zI_q) & I_q \\
           \end{array}
         \right), \quad
         u_{2,j}=\left(
                   \begin{array}{c}
                     u_{2,j-1} \\
                     -\widehat s_{j-1}
                   \end{array}
                 \right)\label{rruub}
\end{equation}
for $j\geq 2$.
\end{proof}

\begin{prop}\label{leipprop1bb} Let the polynomials $\Theta_{1,j}$ and
 $\Gamma_{1,j}$ be as in Definition \ref{def001}. Then
the block elements of the matrix $\widetilde U_2^{(2j+1)}(z)$
defined by (\ref{leipRMe22})
  can be written in the form
\begin{align*}
 \widetilde \alpha_2^{(2j+1)}(z)&=
  \widetilde \alpha_2^{(2j-1)}(z)-(z-a)
   \Theta_{1,j}^*(\bar z)\widehat K_{1,j}^{-1} \Gamma_{1,j}(a),
\\
\widetilde \beta_2^{(2j+1)}(z)&= \widetilde
\beta_2^{(2j-1)}(z)+(z-a) \Theta_{1,j}^*(\bar z)\widehat
K_{1,j}^{-1} \Theta_{1,j}(a),
\\
\widetilde \gamma_2^{(2j+1)}(z)&= \widetilde \gamma_2^{(2j-1)}(z)-
(z-a)\Gamma_{1,j}^*(\bar z)\widehat K_{1,j}^{-1} \Gamma_{1,j}(a)
\end{align*}
and
 $$
\widetilde \delta_2^{(2j+1)}(z)= \widetilde \delta_2^{(2j-1)}(z)+
(z-a)\Gamma_{1,j}^*(\bar z)\widehat K_{1,j}^{-1} \Theta_{1,j}(a).
$$
\end{prop}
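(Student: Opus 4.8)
The plan is to mirror the proof of Proposition~\ref{leipprop1aa}, with the odd-moment data $(H_{2,j},P_{2,j},Q_{2,j},u_{2,j})$ replaced throughout by the even-moment data $(K_{1,j},\Gamma_{1,j},\Theta_{1,j},\widetilde u_{1,j})$, and with the Schur decomposition~(\ref{32aa-2np1}) of $K_{1,j}^{-1}$ used in place of~(\ref{32aa}).

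First I would assemble the block splittings on which the computation rests. For $j\geq1$ the resolvent $R_j$ has the recursive form displayed in~(\ref{rruub}), so that $R_j^{*}(\bar z)$ is block upper triangular with $R_{j-1}^{*}(\bar z)$ as its leading $jq\times jq$ block; by~(\ref{59}) one has $v_j=\left(\begin{smallmatrix}v_{j-1}\\ 0_q\end{smallmatrix}\right)$; and from~(\ref{22}) a short calculation gives $\widetilde u_{1,j}=\left(\begin{smallmatrix}\widetilde u_{1,j-1}\\ s_j-bs_{j-1}\end{smallmatrix}\right)$. Writing $\Xi_{1,j}^K$ as in~(\ref{kk12aa}), the definitions~(\ref{38-1}) and~(\ref{39-1}) state precisely that $\Gamma_{1,j}(z)=(\Xi_{1,j}^K)^{*}R_j(z)v_j$ and $\Theta_{1,j}(z)=(\Xi_{1,j}^K)^{*}R_j(z)\widetilde u_{1,j}$, whence $\Gamma_{1,j}^{*}(\bar z)=v_j^{*}R_j^{*}(\bar z)\Xi_{1,j}^K$ and $\Theta_{1,j}^{*}(\bar z)=\widetilde u_{1,j}^{*}R_j^{*}(\bar z)\Xi_{1,j}^K$, while~(\ref{32aa-2np1}), in the notation of~(\ref{kk12aa}), reads $K_{1,j}^{-1}=\left(\begin{smallmatrix}K_{1,j-1}^{-1} & 0\\ 0 & 0_q\end{smallmatrix}\right)+\Xi_{1,j}^K\widehat K_{1,j}^{-1}(\Xi_{1,j}^K)^{*}$.

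Next I would substitute this last identity into each of the four matrix products $w_1^{*}R_j^{*}(\bar z)K_{1,j}^{-1}R_j(a)w_2$, with $w_1,w_2\in\{v_j,\widetilde u_{1,j}\}$, that occur in~(\ref{leipRMe22}); each then splits into two summands. In the summand carrying the block-diagonal term, the factor $\left(\begin{smallmatrix}K_{1,j-1}^{-1} & 0\\ 0 & 0_q\end{smallmatrix}\right)$ retains only the leading $jq$ components of $R_j(a)w_2$, which by~(\ref{rruub}) equal $R_{j-1}(a)$ times the leading $jq$ entries of $w_2$; since those leading entries are $v_{j-1}$ when $w_2=v_j$ and $\widetilde u_{1,j-1}$ when $w_2=\widetilde u_{1,j}$, and the same truncation applies on the left through $R_j^{*}(\bar z)$, this summand collapses to the corresponding product built from $R_{j-1}$, $K_{1,j-1}^{-1}$, $v_{j-1}$ and $\widetilde u_{1,j-1}$, i.e.\ to the matching entry of $\widetilde U_2^{(2j-1)}(z)$. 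The remaining summand equals $\Xi_{1,j}^K\widehat K_{1,j}^{-1}(\Xi_{1,j}^K)^{*}$ sandwiched between $w_1^{*}R_j^{*}(\bar z)$ and $R_j(a)w_2$, which by the identities for $\Gamma_{1,j}^{*}$ and $\Theta_{1,j}^{*}$ recorded above is one of $\Theta_{1,j}^{*}(\bar z)\widehat K_{1,j}^{-1}\Gamma_{1,j}(a)$, $\Theta_{1,j}^{*}(\bar z)\widehat K_{1,j}^{-1}\Theta_{1,j}(a)$, $\Gamma_{1,j}^{*}(\bar z)\widehat K_{1,j}^{-1}\Gamma_{1,j}(a)$, or $\Gamma_{1,j}^{*}(\bar z)\widehat K_{1,j}^{-1}\Theta_{1,j}(a)$, according to the choice of $(w_1,w_2)$. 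Propagating the outer factors $\mp(z-a)$ (a minus sign in $\widetilde\alpha_2^{(2j+1)}$ and $\widetilde\gamma_2^{(2j+1)}$, a plus sign in $\widetilde\beta_2^{(2j+1)}$ and $\widetilde\delta_2^{(2j+1)}$) then gives the four asserted identities.

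I do not expect a genuine obstacle; the computation is of the same elementary type as in Proposition~\ref{leipprop1aa}. The points that need care are purely bookkeeping: one must check that the trailing block $s_j-bs_{j-1}$ of $\widetilde u_{1,j}$ really drops out against $\left(\begin{smallmatrix}K_{1,j-1}^{-1} & 0\\ 0 & 0_q\end{smallmatrix}\right)$ (this is where the $b$-shift in~(\ref{22}) is used), and one must deal with the lowest value of $j$, where $R_0=I_q$, $v_0=I_q$, $\widetilde u_{1,0}=s_0$ and $K_{1,0}=bs_0-s_1$ by~(\ref{lkk3}), so that the recursion there reduces directly to the defining formulas~(\ref{leipRMe22}).
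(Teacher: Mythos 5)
Your proposal is correct and follows essentially the same route as the paper, whose proof consists precisely of the instruction to combine the Schur-complement decomposition (\ref{32aa-2np1}) of $K_{1,j}^{-1}$ with the block splittings (\ref{59}), (\ref{rruub}) and $\widetilde u_{1,j}=\bigl(\begin{smallmatrix}\widetilde u_{1,j-1}\\ -bs_{j-1}+s_j\end{smallmatrix}\bigr)$; you have simply written out the bookkeeping the paper leaves implicit, including the identification of $(\Xi_{1,j}^K)^*R_j(\cdot)v_j$ and $(\Xi_{1,j}^K)^*R_j(\cdot)\widetilde u_{1,j}$ with $\Gamma_{1,j}$ and $\Theta_{1,j}$. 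No gaps.
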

\begin{proof}
Use (\ref{59}), (\ref{32aa-2np1}), the first equality of
(\ref{rruub}) and equality
\begin{equation}
        \widetilde u_{1,j}=\left(
                   \begin{array}{c}
                     \widetilde u_{1,j-1} \\
                     -bs_{j-1}+s_j
                   \end{array}\right)
                  \label{uttj1}
\end{equation}
 for $j\geq2$.
\end{proof}

\begin{defn} \label{bjdefleip} Let $\widehat H_{2,j}$, $\widehat
K_{1,j}$, $P_{2,j}$, $Q_{2,j}$,
 $\Theta_{1,j}$ and $\Gamma_{1,j}$ be as in  (\ref{lkk2}), (\ref{lkk3}),
  and Definitions \ref{de002}, \ref{def001}, respectively. Define
\begin{align}
 d^{(0)}(z):=&\left(
       \begin{array}{cc}
         I_q
         & (z-a)s_0 \\
         0_q
         & I_q
       \end{array}
     \right),\label{leipg0}
 \\
 d^{(2j+2)}(z):=&\left(
       \begin{array}{cc}
         I_q+(z-a)Q_{2,j}^*(a)\widehat H_{2,j}^{-1} P_{2,j}(a)
         & (z-a)Q_{2,j}^*(a)\widehat H_{2,j}^{-1}  Q_{2,j}(a) \\
         -(z-a)P_{2,j}^*(a)\widehat H_{2,j}^{-1} P_{2,j}(a)
         & I_q-(z-a)P_{2,j}^*(a)\widehat H_{2,j}^{-1} Q_{2,j}(a)
       \end{array}
     \right) \label{leipgj}
 \end{align}
 for  $0\leq j\leq n-1$, and
 \begin{equation}
 d^{(2j+1)}(z):=\left(
       \begin{array}{cc}
         I_q-(z-a)\Theta_{1,j}^*(a)\widehat K_{1,j}^{-1} \Gamma_{1,j}(a)
         & (z-a)\Theta_{1,j}^*(a)\widehat K_{1,j}^{-1} \Theta_{1,j}(a) \\
         -(z-a)\Gamma_{1,j}^*(a)\widehat K_{1,j}^{-1} \Gamma_{1,j}(a)
         & I_q+(z-a)\Gamma_{1,j}^*(a)\widehat K_{1,j}^{-1} \Theta_{1,j}(a)
       \end{array}
     \right) \label{leipfj}
 \end{equation}
 for  $0\leq j\leq n$.

The matrix function $d^{(2j)}$ (resp. $d^{(2j+1)}$) is called the
Blaschke--Potapov factor of the auxiliary matrix $\widehat
U_2^{(2k)}$ (resp. $\widetilde U_2^{(2k+1)}$).
\end{defn}

\begin{thr} \label{mthaaleip} Let the matrix $\widehat U_2^{(2j)}$
(resp. $\widetilde U_2^{(2j+1)}$) be as in (\ref{RM1000}) (resp.
(\ref{leipRMe22})). Let $d^{(j)}$ be defined as in
(\ref{leipg0})-(\ref{leipfj}), then
 \begin{align}
 \widehat U_2^{(0)}(z)=&d^{(0)}(z)d^{(2)}(z),
 \quad \widetilde U_2^{(1)}(z)=d^{(1)}(z), \label{U20d}\\
  \widehat U_2^{(2j)}(z)=& \widehat U_2^{(2j-2)}(z)d^{(2j+2)}(z),\, z\in\C,
  \quad 1\leq j\leq n-1
   \label{leip12n}
\end{align}
and
\begin{align}
 \widetilde U_2^{(2j+1)}(z)=& \widetilde U_2^{(2j-1)}(z)d^{(2j+1)}(z),
  \, z\in\C, \quad 1\leq j\leq n.
   \label{leip1aa}
 \end{align}
\end{thr}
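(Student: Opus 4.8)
The plan is to reduce the two recursions \eqref{leip12n} and \eqref{leip1aa} to a pair of ``transfer identities'' for the orthogonal polynomials and their second-kind companions, to verify those identities by substituting the resolvent-type representations of Section~\ref{sec002}, and to dispose of the base cases \eqref{U20d} by direct computation. First I would rewrite the Blaschke--Potapov factors in rank-$q$ form: from \eqref{leipg0}--\eqref{leipfj},
\[
 d^{(2j+2)}(z)=I_{2q}+(z-a)\begin{pmatrix} Q_{2,j}^*(a)\\ -P_{2,j}^*(a)\end{pmatrix}\widehat H_{2,j}^{-1}\begin{pmatrix} P_{2,j}(a) & Q_{2,j}(a)\end{pmatrix},
\]
and, likewise, $d^{(2j+1)}(z)=I_{2q}-(z-a)\left(\begin{smallmatrix}\Theta_{1,j}^*(a)\\ \Gamma_{1,j}^*(a)\end{smallmatrix}\right)\widehat K_{1,j}^{-1}\left(\begin{smallmatrix}\Gamma_{1,j}(a) & -\Theta_{1,j}(a)\end{smallmatrix}\right)$. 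On the other hand, Proposition~\ref{leipprop1aa} says precisely that
\[
 \widehat U_2^{(2j)}(z)=\widehat U_2^{(2j-2)}(z)+(z-a)\begin{pmatrix} Q_{2,j}^*(\bar z)\\ -P_{2,j}^*(\bar z)\end{pmatrix}\widehat H_{2,j}^{-1}\begin{pmatrix} P_{2,j}(a) & Q_{2,j}(a)\end{pmatrix},
\]
and Proposition~\ref{leipprop1bb} is the analogue for $\widetilde U_2^{(2j+1)}$ with $\Theta_{1,j},\Gamma_{1,j},\widehat K_{1,j}$ in place of $Q_{2,j},P_{2,j},\widehat H_{2,j}$. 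Multiplying out $\widehat U_2^{(2j-2)}(z)\,d^{(2j+2)}(z)$ and comparing with the last display, the recursion \eqref{leip12n} follows as soon as one knows the transfer identity
\[
 \widehat U_2^{(2j-2)}(z)\begin{pmatrix} Q_{2,j}^*(a)\\ -P_{2,j}^*(a)\end{pmatrix}=\begin{pmatrix} Q_{2,j}^*(\bar z)\\ -P_{2,j}^*(\bar z)\end{pmatrix},\qquad z\in\C,
\]
and, in exactly the same way, \eqref{leip1aa} follows from $\widetilde U_2^{(2j-1)}(z)\left(\begin{smallmatrix}\Theta_{1,j}^*(a)\\ \Gamma_{1,j}^*(a)\end{smallmatrix}\right)=\left(\begin{smallmatrix}\Theta_{1,j}^*(\bar z)\\ \Gamma_{1,j}^*(\bar z)\end{smallmatrix}\right)$. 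Only the forward implication is needed, so it suffices to establish these two identities.

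To prove the first transfer identity I would substitute the explicit formulas. By \eqref{RM1000}, $\widehat U_2^{(2j-2)}(z)$ equals $\left(\begin{smallmatrix} I_q & (z-a)s_0\\ 0_q & I_q\end{smallmatrix}\right)$ plus $(z-a)\left(\begin{smallmatrix} u_{2,j-1}^*+zs_0v_{j-1}^*\\ v_{j-1}^*\end{smallmatrix}\right)R_{j-1}^*(\bar z)H_{2,j-1}^{-1}R_{j-1}(a)\left(\begin{smallmatrix}-v_{j-1} & u_{2,j-1}+av_{j-1}s_0\end{smallmatrix}\right)$, while \eqref{92aa}--\eqref{93aa}, together with $a\in\R$ and $s_0^*=s_0$, give
\[
 P_{2,j}^*(\bar z)=v_j^*R_j^*(\bar z)\,\Xi_{2,j}^H,\qquad Q_{2,j}^*(\bar z)=-(u_{2,j}^*+zs_0v_j^*)\,R_j^*(\bar z)\,\Xi_{2,j}^H
\]
with $\Xi_{2,j}^H$ as in \eqref{hh2n12}; note that $\Xi_{2,j}^H$ contains the very $H_{2,j-1}$ occurring in $\widehat U_2^{(2j-2)}$. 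One then passes from level $j$ to level $j-1$ using the block decompositions \eqref{59}, \eqref{rruub} of $v_j,u_{2,j},R_j$, the partition of $H_{2,j}$ appearing in the proof of \eqref{leiH100H}, and the algebraic identities \eqref{leivj01}--\eqref{leiH100H} (which relate the level-$j$ and level-$(j-1)$ versions of $R$, the embeddings $L_{2,j}$, and $v$, $u$, $H$). After collecting terms the two sides become polynomials in the level-$(j-1)$ data, and the step that makes them agree is \eqref{leiH100H}, $T_jH_{2,j}\Xi_{2,j}^H=0$; equivalently, $H_{2,j}\Xi_{2,j}^H$ sits in its bottom $q\times q$ block and equals $\widehat H_{2,j}$ there. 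This is the orthogonality that kills all the cross terms and returns $\big(Q_{2,j}^*(\bar z),-P_{2,j}^*(\bar z)\big)$ to $\widehat U_2^{(2j-2)}(z)$ acting on the evaluations at $a$. The odd transfer identity is obtained identically, replacing $H_{2,\bullet},u_{2,\bullet}$ by $K_{1,\bullet},\widetilde u_{1,\bullet}$, using \eqref{uttj1} and \eqref{leitu1j01} in place of \eqref{rruub}, and invoking \eqref{leiK100K} instead of \eqref{leiH100H}.

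The two base cases are immediate, because level $0$ trivializes every resolvent: $R_0=v_0=I_q$. For $\widetilde U_2^{(1)}=d^{(1)}$, evaluating \eqref{leipRMe22} with $j=0$ (where $\widetilde u_{1,0}=\Theta_{1,0}(a)=s_0$, $\Gamma_{1,0}(a)=I_q$ and $\widehat K_{1,0}=K_{1,0}$) shows that both sides reduce, using $s_0^*=s_0$, to $\left(\begin{smallmatrix} I_q-(z-a)s_0K_{1,0}^{-1} & (z-a)s_0K_{1,0}^{-1}s_0\\ -(z-a)K_{1,0}^{-1} & I_q+(z-a)K_{1,0}^{-1}s_0\end{smallmatrix}\right)$. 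For $\widehat U_2^{(0)}=d^{(0)}d^{(2)}$, evaluating \eqref{RM1000} at $j=0$ (where $H_{2,0}=\widehat H_{2,0}=\widehat s_0$, $u_{2,0}=-(a+b)s_0+s_1$, $P_{2,0}=I_q$ and $Q_{2,0}(z)=-(u_{2,0}+zs_0)$) and multiplying out $\left(\begin{smallmatrix} I_q & (z-a)s_0\\ 0_q & I_q\end{smallmatrix}\right)d^{(2)}(z)$, the off-diagonal block $(z-a)s_0$ of $d^{(0)}$ acting against the $(2,1)$-block $-(z-a)\widehat s_0^{-1}$ of $d^{(2)}$ supplies exactly the missing $z$-quadratic term in $\widehat\alpha_2^{(0)}$ and the extra $(z-a)s_0$ summand in $\widehat\beta_2^{(0)}$, while the bottom rows coincide on inspection.

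The main obstacle is the pair of transfer identities: the factors $d^{(m)}$ are built from the orthogonal polynomials and their second-kind companions evaluated only at the endpoint $z=a$, whereas Propositions~\ref{leipprop1aa}--\ref{leipprop1bb} naturally produce those polynomials in the running argument $\bar z$; the real content of Theorem~\ref{mthaaleip} is precisely that $\widehat U_2^{(2j-2)}(z)$ and $\widetilde U_2^{(2j-1)}(z)$ transport the boundary data at $a$ to the data at $\bar z$. Proving this cleanly amounts to careful bookkeeping between two consecutive levels together with the orthogonality relations \eqref{leiH100H}, \eqref{leiK100K}; once it is in hand, \eqref{leip12n} and \eqref{leip1aa} drop out by a single matrix multiplication.
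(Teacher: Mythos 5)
Your proposal is correct and follows essentially the same route as the paper: your ``transfer identities'' $\widehat U_2^{(2j-2)}(z)\bigl(\begin{smallmatrix}Q_{2,j}^*(a)\\ -P_{2,j}^*(a)\end{smallmatrix}\bigr)=\bigl(\begin{smallmatrix}Q_{2,j}^*(\bar z)\\ -P_{2,j}^*(\bar z)\end{smallmatrix}\bigr)$ and its odd analogue are precisely the paper's relations $\widetilde\Upsilon_{1,j}=\widetilde\Upsilon_{2,j}=0$ (and (\ref{leipyk1})--(\ref{leipyk2})) packaged as a single column identity, reached likewise via Propositions~\ref{leipprop1aa}--\ref{leipprop1bb} and settled by the same level-shifting identities culminating in (\ref{leiH100H}) and (\ref{leiK100K}). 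The base cases are handled identically by direct computation.
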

\begin{proof}
Equality (\ref{U20d}) readily follows from direct calculation. Now
we demonstrate (\ref{leip12n}). Denote
 \begin{align}
  G_{j}^{11}(a)&:=Q_{2,j}^*(a)\widehat H_{2,j}^{-1} P_{2,j}(a),
  \label{leipG11}\\
  G_{j}^{12}(a)&:= Q_{2,j}^*(a)\widehat H_{2,j}^{-1}
    Q_{2,j}(a),\label{leipG12}\\
  G_{j}^{21}(a)&:=P_{2,j}^*(a)\widehat H_{2,j}^{-1} P_{2,j}(a), \label{leipG21}
  \end{align}
  and
  \begin{equation}
  G_{j}^{22}(a):=P_{2,j}^*(a)\widehat H_{2,j}^{-1}  Q_{2,j}(a)\label{leipG22}
 \end{equation}
 for $1\leq j\leq n-1$.

 Now we prove equality (\ref{leip12n}).
  By using (\ref{leipg0}), (\ref{leipgj}), (\ref{leipG11}),
    (\ref{leipG12}), (\ref{leipG21}) and
     (\ref{leipG22}),
     Eq. (\ref{leip12n})
   can be written in the equivalent form
 \begin{align}
 &\left(
   \begin{array}{cc}
     \widehat \alpha_2^{(2j)}(z) &\widehat \beta_2^{(2j)}(z)\\
     \widehat \gamma_2^{(2j)}(z) & \widehat \delta_2^{(2j)}(z)
   \end{array}
 \right)\nonumber\\
 &-\left(
   \begin{array}{cc}
     \widehat \alpha_2^{(2j-2)}(z) &\widehat \beta_2^{(2j-2)}(z)\\
     \widehat \gamma_2^{(2j-2)}(z) & \widehat \delta_2^{(2j-2)}(z)
   \end{array}
 \right)\left(
       \begin{array}{cc}
         I_q+(z-a)G_{j}^{11}(a)
         & (z-a)G_{j}^{12}(a) \\
         -(z-a)G_{j}^{21}(a)
         & I_q-(z-a)G_{j}^{22}(a)
       \end{array}
     \right)=0. \label{meq2naa}
 \end{align}
The left-hand side of (\ref{meq2naa}) is equivalent to the following
four equalities:
\begin{align*}
\Upsilon_{11,j} :=&\widehat \alpha_2^{(2j)}(z) -\widehat
\alpha_2^{(2j-2)}(z)+(z-a) \left(-\widehat
\alpha_2^{(2j-2)}(z)G_{j}^{11}(a)+
\beta_2^{(2j-2)}(z)G_{j}^{21}(a)\right), 
\\
\Upsilon_{12,j}:= &\widehat \beta_2^{(2j)}(z) -\widehat
\beta_2^{(2j-2)}(z)-(z-a) \left(\widehat
\alpha_2^{(2j-2)}(z)G_{j}^{12}(a)-
\beta_2^{(2j-2)}(z)G_{j}^{22}(a)\right), 
\\
\Upsilon_{21,j}:=&\widehat \gamma_2^{(2j+1)}(z) -\widehat
\gamma_2^{(2j-2)}(z)+(z-a) \left( -\widehat
\gamma_2^{(2j-2)}(z)G_{j}^{11}(a)+
\delta_2^{(2j-2)}(z)G_{j}^{21}(a)\right) 
\end{align*}
and
 \begin{align*}
\Upsilon_{22,j}:=\widehat \delta_2^{(2j+1)}(z) -\widehat
\delta_2^{(2j-2)}(z)-(z-a) \left(\widehat
\gamma_2^{(2j-2)}(z)G_{j}^{12}(a)-
\delta_2^{(2j-2)}(z)G_{j}^{22}(a)\right). 
  \end{align*}
By taking into account (\ref{leipG11}) and (\ref{leipG21}), we have
  \begin{align*}
\Upsilon_{11,j}=&(z-a)\widetilde \Upsilon_{1,j}\widehat
H_{2,j}^{-1}P_{2,j}(a),\quad \Upsilon_{12,j}=(z-a)\widetilde
\Upsilon_{1,j}\widehat H_{2,j}^{-1}
 Q_{2,j}(a),\\ 
\Upsilon_{21,j}=&(z-a)\widetilde \Upsilon_{2,j}\widehat
H_{2,j}^{-1}P_{2,j}(a),
 \quad
 \Upsilon_{22,j}=(z-a)\widetilde \Upsilon_{2,j}\widehat
 H_{2,j}^{-1} Q_{2,j}(a) 
  \end{align*}
where
 \begin{align*}
\widetilde \Upsilon_{1,j+1}:=&Q^*_{2,j+1}(\bar z)- Q^*_{2,j+1}(a)
 +(z-a)(u^{*}_{2,j}+zs_0v_j^*)R_{j}^*(\bar z)
 H_{2,j}^{-1}R_{j}(a)v_{j} Q_{2,j+1}^*(a)\nonumber\\
  &+(z-a)(s_0+ (u^{*}_{2,j}+zs_0v_j^*)R_{j}^*(\bar z)H_{2,j}^{-1}R_{j}(a)
  (u_{2,j}+av_js_0))P_{2,j+1}^*(a), 
  \\
\widetilde \Upsilon_{2,j+1}:=&-P^*_{2,j+1}(\bar z)+P^*_{2,j+1}(a)
 +(z-a)v_{j}^*R_{j}^*(\bar z)
 H_{2,j}^{-1}R_{j}(a)v_{j} Q_{2,j+1}^*(a)\nonumber\\
  &+(z-a) v_{j}^*R_{j}^*(\bar z)H_{2,j}^{-1}R_{j}(a)
  (u_{2,j}+av_j s_0)P_{2,j+1}^*(a). 
 \end{align*}
  Now we verify that
  \begin{equation}
\widetilde \Upsilon_{\ell,j}=0, \quad \ell ,k\in\{1,2\}, \quad 1\leq
j\leq n-1. \label{leipyyjk-2n}
  \end{equation}
By using (\ref{93aa}), (\ref{92aa}) and (\ref{hh2n12}), we have
\begin{align*}
\widetilde \Upsilon_{1,j+1} &= \left(
 -(u_{2,j+1}+zs_0v_{j+1}^*)R_{j+1}^*(\bar z)+
 (u_{2,j+1}+as_0v_{j+1}^*)R_{j+1}^*(a)+
 \right.\\
 &-(z-a)(u_{2,j}+zs_0v_{j}^*)R_{j}^*(\bar z)H_{2,j}^{-1}R_j(a)
 (u_{2,j+1}^*+a s_0 v_{j+1}^*)R_{j+1}^*(a)\\
 &+(z-a)
 \left(s_0+
  (u_{2,j}^*+z s_0v_j^*)R_j^*(\bar z)H_{2,j}^{-1}
  R_j(a)(u_{2,j}+a v_j s_0)\right)\\
  &\cdot \left.v_{j+1}^*R_{j+1}^*(a)\right)\Xi_{2,j}^H
  \\
  =&(z-a)v_{j+2}^*H_{1,j+2}R_{j+2}^*(\bar z)
  \left(
   -T_{j+2}^*(L_{1,j+2}-bL_{2,j+2})\right.\\
   &-
   \left((I-z T_{j+2}^*)L_{2,j+2}(L_{1,j+2}-bL_{2,j+1})+(z-a)
    (I-z T_{j+1}^*)L_{2,j+2}\right.\\
&\cdot    R_{j+1}^*(\bar
z)T_{j+1}^*(L_{1,j+1}-bL_{2,j+1}))H_{2,j}^{-1}R_j(a)v_jv_{j+2}^*H_{1,j+2}
  \\
 &\cdot (L_{1,j+1}-bL_{2,j+1})-
  (I-z T_{j+1}^*)L_{2,j+2}(T_{j+1}T_{j+1}^*-I)R_{j+1}^*(a)\\
  &+
  \left((I-z T_{j+2}^*)L_{2,j+2}(L_{1,j+1}-bL_{2,j+1})+(z-a)(I-z
  T_{j+2}^*)
  L_{2,j+2}\right.\\
  &\left.\cdot R_{j+1}^*(\bar z)T_{j+1}^*  (L_{1,j+1}-bL_{2,j+1})\right)\\
  &\cdot \left.\left(L_{1,j+1}R_{j+1}^*(a)+H_{2,j}^{-1}
  (L_{1,j+1}^*-bL_{2,j+1}^*)H_{1,j+1}
  \right)\right)\Xi_{2,j}^H
  \\
  =&(z-a)v_{j+2}^*H_{1,j+2}R_{j+2}^*(\bar z)\left(
   -T_{j+2}^*(L_{1,j+1}-bL_{2,j+1})-(T_{j+2}T_{j+2}^*-I)\right.\\
   &\cdot L_{2,j+2}R_{j+1}^*(a)   +(I-a
   T_{j+1}^*)L_{2,j+2}(L_{1,j+1}-bL_{2,j+1})L_{1,j+1}^*R_{j+1}^*(a)\\
   &-(I-a T_{j+2}^*)L_{2,j+2}(L_{1,j+1}-bL_{2,j+1})H_{2,j}^{-1}R_j(a)v_jv_{j+2}^*
   H_{1,j+2}\\
   &\cdot (L_{1,j+1}-bL_{2,j+1})
   +(I-a T_{j+2}^*)L_{2,j+2}(L_{1,j+1}-bL_{2,j+1})
   H_{2,j}^{-1}\\
   &\cdot \left.
   (L_{1,j+1}^*-bL_{2,j+1}^*)H_{1,j+1}\right)\Xi_{2,j}^H\\
=&-(z-a)v_{j+2}^*H_{1,j+2}R_{j+2}^*(\bar z)
 (I-a
 T_{j+2}^*)L_{2,j+2}(L_{1,j+1}-bL_{2,j+1})H_{2,j}^{-1}\\
 &\cdot L_{2,j+1}^*T_{j+1}H_{2,j+1}
\Xi_{2,j}^H\\
=&0.
  \end{align*}
In the second equality we used (\ref{leiHH1}) and (\ref{leiHH11}).
 In the third equality we employed (\ref{leiHH2}) and (\ref{leiHH3}).
 The
penultimate  equality follows from (\ref{lieHH4}) and
(\ref{leiHH5}). The last equality follows from identity
(\ref{leiH100H}).
 Equality (\ref{leipyyjk-2n}) for $\ell=2$ is
proved by using (\ref{92aa}), (\ref{93aa}), (\ref{leivj01}),
(\ref{leiLLTj}), (\ref{leiHH5}) and (\ref{leiH100H}).

To prove (\ref{leip1aa}), we used the following equalities:
\begin{align}
\Theta^*_{1,j}&(\bar z)-\Theta^*_{1,j}(a)
 +(z-a)\widetilde u_{1,j-1}^*R_{j-1}^*(\bar z)
 K_{1,j-1}^{-1}R_{j-1}(a)v_{j-1}\Theta_{1,j}^*(a)\nonumber\\
  &-(z-a)\widetilde u_{1,j-1}^*R_{j-1}^*(\bar z)K_{1,j-1}^{-1}R_{j-1}(a)\widetilde
  u_{1,j-1}\Gamma_{1,j}^*(a)=0\label{leipyk1}
  \end{align}
  and
  \begin{align}
\Gamma^*_{1,j}&(\bar z)-\Gamma^*_{1,j}(a)
 +(z-a)\widetilde v_{j-1}^*R_{j-1}^*(\bar z)
 K_{1,j-1}^{-1}R_{j-1}(a)v_{j-1}\Theta_{1,j}^*(a)\nonumber\\
  &-(z-a)\widetilde v_{j-1}^*R_{j-1}^*(\bar z)K_{1,j-1}^{-1}R_{j-1}(a)\widetilde
  u_{1,j-1}\Gamma_{1,j}^*(a)=0.\label{leipyk2}
 \end{align}
 In turn (\ref{leipyk1}), (\ref{leipyk2}) are demonstrated by using
(\ref{38-1}), (\ref{39-1}),
 (\ref{leitu1j01}), (\ref{kk12aa}), (\ref{leiLLTj}),
 (\ref{leivj01}), (\ref{leitu1j01}), (\ref{leiA2.2}), (\ref{leipKLLK1})
 and (\ref{leiK100K}).  The theorem is proved.
\end{proof}

\begin{sleds}\label{cor-2np1} %
Let the auxiliary matrices  $\widetilde U_2^{(2j+1)}$ and $\widehat
U_2^{(2j)}$ defined as in  (\ref{leipRMe22}) and (\ref{RM1000}).
Furthermore, let $d^{(j)}$ be as in (\ref{leipg0})--(\ref{leipfj}).
Thus for $1\leq j\leq n$ the admit a  Blaschke--Potapov
multiplicative representation (\ref{uuu2nm1}) and (\ref{uuu2n}).
\end{sleds}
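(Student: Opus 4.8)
The plan is to obtain both multiplicative representations (\ref{uuu2nm1}) and (\ref{uuu2n}) as an immediate consequence of Theorem \ref{mthaaleip} by a finite induction on the number of moments, so that no computation beyond what has already been established there is required.

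For the odd case I would argue as follows. The base step is the second equality in (\ref{U20d}), namely $\widetilde U_2^{(1)}(z)=d^{(1)}(z)$, which is already of the required product form. For the inductive step, assume that for some $j$ with $1\le j\le n$ one has $\widetilde U_2^{(2j-1)}=d^{(1)}d^{(3)}\cdots d^{(2j-1)}$. Multiplying on the right by $d^{(2j+1)}(z)$ and using the recursion (\ref{leip1aa}), i.e.\ $\widetilde U_2^{(2j-1)}(z)d^{(2j+1)}(z)=\widetilde U_2^{(2j+1)}(z)$, one obtains $\widetilde U_2^{(2j+1)}=d^{(1)}d^{(3)}\cdots d^{(2j-1)}d^{(2j+1)}$, which is the same assertion one step further. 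Carrying the induction up to $j=n$ yields (\ref{uuu2nm1}).

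The even case is entirely parallel. The base step is now the first equality in (\ref{U20d}), $\widehat U_2^{(0)}(z)=d^{(0)}(z)d^{(2)}(z)$, which already displays the two leading factors of the product. For the inductive step one assumes $\widehat U_2^{(2j-2)}=d^{(0)}d^{(2)}\cdots d^{(2j)}$ for an admissible $j$ and applies the recursion (\ref{leip12n}), $\widehat U_2^{(2j)}(z)=\widehat U_2^{(2j-2)}(z)d^{(2j+2)}(z)$, to get $\widehat U_2^{(2j)}=d^{(0)}d^{(2)}\cdots d^{(2j)}d^{(2j+2)}$; iterating over all admissible indices gives the even representation (\ref{uuu2n}).

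There is no genuine obstacle here: all the substantive content — the recursion formulas themselves and, underneath them, the algebraic identities of the preceding section together with the correct form of the Blaschke--Potapov factors in (\ref{leipg0})--(\ref{leipfj}) — is precisely what Theorem \ref{mthaaleip} packages. The only point that calls for attention is bookkeeping: one should check that the index range $1\le j\le n-1$ in (\ref{leip12n}) and $1\le j\le n$ in (\ref{leip1aa}), combined with the admissible ranges of the factors $d^{(2j)}$ and $d^{(2j+1)}$ fixed in Definition \ref{bjdefleip}, are exactly those needed so that the telescoping closes up without a gap and terminates at the stated top factor. Once this is verified, the corollary follows directly.
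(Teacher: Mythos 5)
Your proof is correct and is essentially the paper's own argument: the paper simply states that the corollary ``follows immediately from Theorem \ref{mthaaleip}'', and your finite induction from the base cases (\ref{U20d}) via the recursions (\ref{leip12n}) and (\ref{leip1aa}) is precisely the unfolding of that remark. Your closing caution about matching the index ranges is well placed, since the terminal indices as printed in (\ref{uuu2n}) do require exactly that bookkeeping check.
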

The proof follows immediately  from Theorem \ref{mthaaleip}.
\section{Representation of the RM via DSM parameters
of the THMM problem}
 In this section we obtain a multiplicative representation of the
 RM of the THMM problem in terms of OMPs on $[a,b]$ (more information
 can be found in Definitions \ref{de002} and \ref{def001}) and DSM parameters.
\begin{defn} \label{st2ndef-2np1} Let $a$ and $b$ be real numbers such that $a<b$.
  Let $H_{2,j}$, $K_{1,j}$,
 $R_j$, $v_j$, $u_{2,j}$ be defined by (\ref{63}), (\ref{2nmas1}),
  (\ref{52}), (\ref{59}), (\ref{22}) and (\ref{uuu001}), respectively.
 Furthermore, let $H_{2,j}$, $K_{1,j}$ be positive definite
 matrices.
For $1\leq j\leq n-1$ denote by
\begin{align}
 \rb_0:=&s_0,\nonumber \\
 \rb_j(a,b):=&s_0+(u^*_{2,j}+as_0v_j^*)R_j^*(a)H_{2,j}^{-1}R_j(a)
(u_{2,j}+av_j s_0), \label{ll-2np1}
\\
\tb_0(b):=&v_0^*R_0^*(a)K_{1,0}^{-1}R_0(a)v_0,\quad
 \tb_j(a,b):=
v_j^*R_j^*(a)K_{1,j}^{-1}R_j(a)v_j,  \label{ttt0j}
 \\
 \lb_{-1}:=&s_0,\quad \lb_0(a,b):=(u_{2,0}^*+a s_0 v_0^*)H_{2,0}^{-1}(u_{2,0}+av_0 s_0),
 \label{mmm01}\\
 \lb_j(a,b):=&(u^*_{2,j}+a s_0 v_j^*)R_j^*(a)H_{2,j}^{-1}R_j(a)
(u_{2,j}+av_j s_0)\nonumber\\
&-(u^*_{2,j-1}+a s_0 v_{j-1}^*)R_{j-1}^*(a)H_{2,j-1}^{-1}R_{j-1}(a)
 (u_{2,j-1}+av_{j-1} s_0),\label{mmmj}
 \end{align}
 and
 \begin{align}
 \mb_0(b):=&\tb_0(b),\,
 \mb_j(a,b):=v_{j}^*R_{j}^*(a)K_{1,j}^{-1}R_{j}(a)v_{j}-
 v_{j-1}^*R_{j-1}^*(a)K_{1,j-1}^{-1}R_{j-1}(a)v_{j-1}
 \label{lllj}
\end{align}
for $1\leq j\leq n.$
 The matrices  $\lb_j(a,b)$ and $\mb_j(a,b)$ are
  called the second type Dyukarev--Stieltjes matrix parameters of the
   THMM problem.
\end{defn}
 Below we shall usually omit the dependence on $a$
and $b$ of the matrices (\ref{ll-2np1})-(\ref{lllj}).

 Note that from (\ref{32aa}), (\ref{32aa-2np1}), (\ref{rruub}),
  (\ref{uttj1}), (\ref{93aa}), (\ref{38-1}), (\ref{39-1}) and (\ref{92aa})
   the following identities are valid:
\begin{align}
\lb_j=&Q_{2,j}^*(a)\widehat H_{2,j}^{-1} Q_{2,j}(a),\quad
\mb_j=\Gamma_{1,j}^*(a)\widehat K_{1,j}^{-1}\Gamma_{1,j}(a),
\label{mm01-2np1}\\
 \rb_j=&\Gamma_{1,j}^{-1}(a)\Theta_{1,j}(a),
 \qquad \tb_j=Q_{2,j}^{-1}(a)P_{2,j}(a). \label{tQP2j}
\end{align}
\begin{rem} Let $\rb_j$, $\tb_j$, $\lb_j$ and $\mb_j$ be as in
(\ref{ll-2np1})-(\ref{lllj}). Thus, the following equalities hold:
\begin{align}
 \lb_j=&\rb_{j+1}-\rb_{j},\quad j\geq 0,  \label{mm100a}\\
 \mb_j=&\tb_j-\tb_{j-1}, \quad j\geq 1. \label{mm200a}
\end{align}
Moreover, the matrices $\lb_j$ and $\mb_j$ are positive definite
matrices.
\end{rem}
\begin{proof} 
 Equalities (\ref{mm100a})-(\ref{mm200a})
follow by direct calculation from (\ref{ll-2np1})-(\ref{lllj}).\\
 By the second equality of (\ref{pqHHa}) and the fact that $\widehat K_{1,j}$
 and $\widehat H_{2,j}$ are positive definite matrices we obtain
 that $\lb_j$ and $\mb_j$ also are.
\end{proof}
The following theorem shows an explicit representation between the
 Blaschke--Potapov factors $d^{(j)}$ and the matrices $\rb_j$,
  $\tb_j$, $\lb_j$ and $\mb_j$.
\begin{thr} \label{mth001-2np1new} Let $d^{(j)}$ be as in
 (\ref{leipg0})-(\ref{leipgj}) and
  $\rb_j$, $\tb_j$, $\lb_j$,  $\mb_j$
 be defined by (\ref{ll-2np1})-(\ref{lllj}), respectively. The identity
 \begin{align}
 d^{(2j+1)}(z)
  &=
\left(       \begin{array}{cc}
             I_q & \rb_j \\
             0_q & I_q \\
           \end{array}
         \right)
         \left(
           \begin{array}{cc}
             I_q & 0_q \\
             -(z-a)\mb_j & I_q \\
           \end{array}
         \right)
         \left(
           \begin{array}{cc}
             I_q & -\rb_j \\
             0_q & I_q \\
           \end{array}
         \right),
         \label{main1-2np1}
 \\
d^{(2j+2)}(z)
  &=
\left(
           \begin{array}{cc}
             I_q & 0_q \\
             -\tb_j & I_q \\
           \end{array}
         \right)
         \left(
           \begin{array}{cc}
             I_q & (z-a)\lb_j\\
             0_q & I_q \\
           \end{array}
         \right)
         \left(
           \begin{array}{cc}
             I_q & 0_q \\
             \tb_j & I_q \\
           \end{array}
         \right)
         \label{main1-2n}
 \end{align}
 then holds for $0\leq j \leq n$.
\end{thr}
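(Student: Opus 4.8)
The plan is to reduce both claimed factorizations to a purely formal $2\times2$ block matrix computation, the entire arithmetic content being already packaged in identities (\ref{mm01-2np1}) and (\ref{tQP2j}). Two preliminary facts are needed. First, $\rb_j$ and $\tb_j$ are Hermitian: by (\ref{ll-2np1}) and (\ref{ttt0j}) they have the shape $s_0+w^*H_{2,j}^{-1}w$, respectively $v_j^*R_j^*(a)K_{1,j}^{-1}R_j(a)v_j$, and $s_0$, $H_{2,j}^{-1}$, $K_{1,j}^{-1}$ are Hermitian while $a\in\R$ (the matrices $\lb_j$, $\mb_j$ are even positive definite by the Remark preceding the theorem). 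Second, since $\widehat K_{1,j}$ and $\widehat H_{2,j}$ are positive definite, the relations (\ref{pqHHa}) and (\ref{pqHHa11}) force $\Gamma_{1,j}(a)$ and $Q_{2,j}(a)$ to be invertible, so that (\ref{tQP2j}) together with Hermiticity yields
\begin{align*}
\Theta_{1,j}(a)&=\Gamma_{1,j}(a)\rb_j,& \Theta_{1,j}^*(a)&=\rb_j\Gamma_{1,j}^*(a),\\
P_{2,j}(a)&=Q_{2,j}(a)\tb_j,& P_{2,j}^*(a)&=\tb_j Q_{2,j}^*(a).
\end{align*}

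For (\ref{main1-2np1}) I would substitute these relations (and their adjoints) into the four block entries of $d^{(2j+1)}$ supplied by (\ref{leipfj}) and invoke $\mb_j=\Gamma_{1,j}^*(a)\widehat K_{1,j}^{-1}\Gamma_{1,j}(a)$ from (\ref{mm01-2np1}). This turns $d^{(2j+1)}(z)$ into
$$
\begin{pmatrix}
 I_q-(z-a)\rb_j\mb_j & (z-a)\rb_j\mb_j\rb_j\\
 -(z-a)\mb_j & I_q+(z-a)\mb_j\rb_j
\end{pmatrix},
$$
and multiplying out the three elementary factors on the right-hand side of (\ref{main1-2np1}) yields exactly the same matrix. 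The argument for (\ref{main1-2n}) is of the same type: inserting $P_{2,j}(a)=Q_{2,j}(a)\tb_j$ and its adjoint into (\ref{leipgj}) and using $\lb_j=Q_{2,j}^*(a)\widehat H_{2,j}^{-1}Q_{2,j}(a)$ rewrites $d^{(2j+2)}(z)$ as
$$
\begin{pmatrix}
 I_q+(z-a)\lb_j\tb_j & (z-a)\lb_j\\
 -(z-a)\tb_j\lb_j\tb_j & I_q-(z-a)\tb_j\lb_j
\end{pmatrix},
$$
which is precisely the product of the three triangular factors on the right-hand side of (\ref{main1-2n}). The edge case $j=0$ is covered by the same computation, using the conventions $\rb_0=s_0$, $\mb_0=\tb_0$ and $R_0(a)=I_q$.

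No serious obstacle arises; the one place to be slightly careful is the Hermiticity of $\rb_j$ and $\tb_j$, since this is exactly what legitimizes passing from $\rb_j=\Gamma_{1,j}^{-1}(a)\Theta_{1,j}(a)$ to $\Theta_{1,j}^*(a)=\rb_j\Gamma_{1,j}^*(a)$ (and similarly for $\tb_j$, $Q_{2,j}$, $P_{2,j}$). Everything else is the routine bookkeeping of two products of three $2\times 2$ block matrices, which I would not write out in full.
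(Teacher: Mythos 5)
Your proposal is correct and follows essentially the same route as the paper's proof: both reduce the claim to checking the four block entries of $d^{(2j+1)}$ and $d^{(2j+2)}$ against the explicit product of the three elementary factors, using $\mb_j=\Gamma_{1,j}^*(a)\widehat K_{1,j}^{-1}\Gamma_{1,j}(a)$, $\lb_j=Q_{2,j}^*(a)\widehat H_{2,j}^{-1}Q_{2,j}(a)$ and $\rb_j=\Gamma_{1,j}^{-1}(a)\Theta_{1,j}(a)$, $\tb_j=Q_{2,j}^{-1}(a)P_{2,j}(a)$. Your explicit remark that the Hermiticity of $\rb_j$ and $\tb_j$ is what licenses passing between $\Gamma_{1,j}^{-1}(a)\Theta_{1,j}(a)$ and $\Theta_{1,j}^*(a)\Gamma_{1,j}^{*^{-1}}(a)$ is a point the paper uses only tacitly, so it is a welcome clarification rather than a deviation.
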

 \begin{proof} We prove (\ref{main1-2np1}).
  For $j=0$ the proof can be checked  by a direct calculation. %
  Let  $1\leq j\leq n$.  Denote
$$
    d^{(2j+1)}:=\left(
          \begin{array}{cc}
            d_j^{11} & d_j^{12} \\
            d_j^{21} & d_j^{22} \\
          \end{array}
        \right). 
$$ 
   The relation (\ref{main1-2np1}) is equivalent to the following four
   equalities:
   \begin{align}
&d_j^{11}-I_q+(z-a)\rb_j \mb_j=0, \label{f11lei2np1}\\
&d_j^{12}-(z-a)\rb_j \mb_j \rb_j=0, \label{f12lei2np1}\\
&d_j^{21}+(z-a)\mb_j=0, \label{f21lei2np1}\\
&d_j^{22}-I_q-(z-a)\mb_j \rb_j=0. \label{f22lei2np1}
   \end{align}
Let us now prove (\ref{f11lei2np1}). By the $(1,1)$ element of
$d^{(2j+1)}$ , (\ref{mm01-2np1}) and (\ref{tQP2j}), we have
\begin{align*}
&d_j^{11}-I_q+(z-a)\rb_j \mb_j\\
&=-(z-a)\Theta_{1,j}^*(a)\widehat K_{1,j}^{-1}
\Gamma_{1,j}(a)+(z-a)\Theta_{1,j}^*(a)\Gamma_{1,j}^{*^{-1}}(a)
\Gamma_{1,j}^*(a)\widehat K_{1,j}^{-1}\Gamma_{1,j}(a)\\
&=0
\end{align*}
The equalities (\ref{f12lei2np1}) and (\ref{f22lei2np1}) are proved
in a similar way. Note that (\ref{f21lei2np1}) is verified by
definition. To prove (\ref{main1-2n}) one uses (\ref{leipgj}), the
first equality of (\ref{mm01-2np1}) and the second equality of
(\ref{tQP2j}). Thus theorem \ref{mth001-2np1new} is proved.
\end{proof}
Let $n\in \N_0$, and let $A_0,\ldots, A_n$ be complex $q\times q$
matrices. Then let
\begin{align*}
\overrightarrow{\prod_{j=0}^{n}} A_j=A_0A_1\cdots A_{n-1} A_n \quad
\mbox{and} \quad \overleftarrow{\prod_{j=0}^{n}}
A_j=A_nA_{n-1}\cdots A_1 A_0
\end{align*}
denote the right and left product of the matrices $A_0, A_1, \ldots,
A_n$, respectively.
 \vskip3mm

The following corollary readily yields by employing (\ref{mm100a}),
(\ref{mm200a}), Theorem \ref{mth001-2np1new} and
Corollary \ref{cor-2np1}.                     %
\begin{sleds}\label{utildes}
Let $\widehat U_2^{(2n)}$ and $\widetilde U_2^{(2n+1)}$ be as in
(\ref{RM1000}) and (\ref{leipRMe22}), respectively.
 Thus, the equalities
 \begin{equation}
  \widehat U_2^{(2n)}=\overrightarrow{\prod_{k=0}^{n-1}}
  \left[\left(
  \begin{array}{cc}
  I_q & (z-a)\lb_{k-1} \\
        0_q & I_q \\
         \end{array}
         \right)\left(
                       \begin{array}{cc}
                         I_q & 0_q \\
                         -\mb_k & I_q \\
                       \end{array}
                     \right)
                    \right] \label{0009}
\end{equation}
  and
\begin{equation}
  \widetilde U_2^{(2n+1)}=
  \overrightarrow{\prod_{k=0}^{n}}
  \left[\left(\begin{array}{cc}
        I_q &  \lb_{k-1} \\
        0_q & I_q \\
        \end{array}  \right)
  \left( \begin{array}{cc}
           I_q & 0_q \\
           -(z-a)\mb_k & I_q \\
                       \end{array}
                     \right)
                     \right]
   \label{00010}
\end{equation}
   are valid.
%
\end{sleds}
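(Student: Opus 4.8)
The plan is to derive Corollary~\ref{utildes} purely formally from the results already in hand, without touching the orthogonal polynomials or Hankel matrices again. The two ingredients are: (a) the Blaschke--Potapov factorizations $\widehat U_2^{(2n)}=d^{(0)}d^{(2)}\cdots d^{(2n-2)}$ and $\widetilde U_2^{(2n+1)}=d^{(1)}d^{(3)}\cdots d^{(2n+1)}$ from Corollary~\ref{cor-2np1} (equivalently the telescoping identities \eqref{U20d}, \eqref{leip12n}, \eqref{leip1aa}), and (b) the explicit conjugated form of each factor in Theorem~\ref{mth001-2np1new}, namely
\begin{align*}
d^{(2j+1)}(z)&=\begin{pmatrix} I_q & \rb_j \\ 0_q & I_q\end{pmatrix}\begin{pmatrix} I_q & 0_q \\ -(z-a)\mb_j & I_q\end{pmatrix}\begin{pmatrix} I_q & -\rb_j \\ 0_q & I_q\end{pmatrix},\\
d^{(2j+2)}(z)&=\begin{pmatrix} I_q & 0_q \\ -\tb_j & I_q\end{pmatrix}\begin{pmatrix} I_q & (z-a)\lb_j \\ 0_q & I_q\end{pmatrix}\begin{pmatrix} I_q & 0_q \\ \tb_j & I_q\end{pmatrix}.
\end{align*}
The first step is to substitute these into the ordered products and observe that consecutive conjugating factors collapse via a telescoping cancellation.

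For the even case, substitute into $\widehat U_2^{(2n)}=d^{(0)}\,d^{(2)}\cdots d^{(2n-2)}=d^{(0)}\prod_{j=0}^{n-2} d^{(2j+2)}$. Here $d^{(0)}(z)=\bigl(\begin{smallmatrix} I_q & (z-a)s_0 \\ 0_q & I_q\end{smallmatrix}\bigr)=\bigl(\begin{smallmatrix} I_q & (z-a)\lb_{-1} \\ 0_q & I_q\end{smallmatrix}\bigr)$ since $\lb_{-1}=s_0$, and the innermost factor of $d^{(2)}$ (the block $\bigl(\begin{smallmatrix} I_q & 0_q \\ \tb_0 & I_q\end{smallmatrix}\bigr)$) meets the outermost factor of the \emph{next} block $d^{(4)}$ (which is $\bigl(\begin{smallmatrix} I_q & 0_q \\ -\tb_1 & I_q\end{smallmatrix}\bigr)$) -- these do \emph{not} cancel directly, so one must instead regroup using $\mb_k=\tb_k-\tb_{k-1}$ from \eqref{mm200a}. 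Concretely, inside each $d^{(2j+2)}$ write the central block in the conjugated form and then use that $\bigl(\begin{smallmatrix} I_q & 0_q \\ \tb_j & I_q\end{smallmatrix}\bigr)\bigl(\begin{smallmatrix} I_q & 0_q \\ -\tb_{j+1} & I_q\end{smallmatrix}\bigr)=\bigl(\begin{smallmatrix} I_q & 0_q \\ \tb_j-\tb_{j+1} & I_q\end{smallmatrix}\bigr)=\bigl(\begin{smallmatrix} I_q & 0_q \\ -\mb_{j+1} & I_q\end{smallmatrix}\bigr)$, so after shifting indices one is left with $\widehat U_2^{(2n)}=\prod_{k=0}^{n-1}\bigl[\bigl(\begin{smallmatrix} I_q & (z-a)\lb_{k-1} \\ 0_q & I_q\end{smallmatrix}\bigr)\bigl(\begin{smallmatrix} I_q & 0_q \\ -\mb_k & I_q\end{smallmatrix}\bigr)\bigr]$, which is exactly \eqref{0009}. (One should be careful at the two ends: the leftmost surviving $\bigl(\begin{smallmatrix} I_q & 0_q \\ -\tb_0 & I_q\end{smallmatrix}\bigr)$ from $d^{(2)}$ combines with the trivial $\bigl(\begin{smallmatrix} I_q & 0_q \\ 0_q & I_q\end{smallmatrix}\bigr)=\bigl(\begin{smallmatrix} I_q & 0_q \\ -\tb_{-1} & I_q\end{smallmatrix}\bigr)$ convention since $\tb_{-1}$ would be $0$, giving $-\mb_0=-\tb_0$, consistent with \eqref{lllj}; at the right end the final $\bigl(\begin{smallmatrix} I_q & 0_q \\ \tb_{n-1} & I_q\end{smallmatrix}\bigr)$ is absorbed into the convention for the product's terminal factor, or equivalently one checks the $n=1$ base case against \eqref{U20d} directly.)

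The odd case is entirely analogous: substitute the conjugated form of $d^{(2j+1)}$ into $\widetilde U_2^{(2n+1)}=\prod_{j=0}^{n} d^{(2j+1)}$. Now the cancellation uses $\bigl(\begin{smallmatrix} I_q & -\rb_j \\ 0_q & I_q\end{smallmatrix}\bigr)\bigl(\begin{smallmatrix} I_q & \rb_{j+1} \\ 0_q & I_q\end{smallmatrix}\bigr)=\bigl(\begin{smallmatrix} I_q & \rb_{j+1}-\rb_j \\ 0_q & I_q\end{smallmatrix}\bigr)=\bigl(\begin{smallmatrix} I_q & \lb_j \\ 0_q & I_q\end{smallmatrix}\bigr)$ by \eqref{mm100a}, together with $\rb_0=s_0=\lb_{-1}$ to identify the leftmost leftover factor, producing $\widetilde U_2^{(2n+1)}=\prod_{k=0}^{n}\bigl[\bigl(\begin{smallmatrix} I_q & \lb_{k-1} \\ 0_q & I_q\end{smallmatrix}\bigr)\bigl(\begin{smallmatrix} I_q & 0_q \\ -(z-a)\mb_k & I_q\end{smallmatrix}\bigr)\bigr]$, i.e.\ \eqref{00010}. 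The main obstacle -- really the only thing requiring care -- is bookkeeping at the two endpoints of the telescoping product: making the index shift $\tb_j\mapsto\mb_{j+1}$ (resp.\ $\rb_j\mapsto\lb_{j-1}$) line up so that the $k$-th surviving pair has exactly the claimed subscripts, and verifying that the boundary factors involving $\lb_{-1}=s_0$, $\mb_0=\tb_0$ come out right. I would settle this cleanly by an induction on $n$ using the telescoping recursions \eqref{leip12n}, \eqref{leip1aa}: assume the product formula for $n-1$, multiply on the right by $d^{(2n)}$ (resp.\ $d^{(2n+1)}$) in its conjugated form, and perform the single cancellation $\bigl(\begin{smallmatrix} I_q & 0_q \\ \tb_{n-1} & I_q\end{smallmatrix}\bigr)\bigl(\begin{smallmatrix} I_q & 0_q \\ -\tb_{n-1} & I_q\end{smallmatrix}\bigr)=I_{2q}$ that glues the new factor to the tail of the previous product; the base case $n=0$ (or $n=1$) is \eqref{U20d}.
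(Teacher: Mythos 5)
Your strategy is exactly the one the paper intends: the text offers nothing beyond ``readily yields by employing (\ref{mm100a}), (\ref{mm200a}), Theorem \ref{mth001-2np1new} and Corollary \ref{cor-2np1}'', and your substitute-and-telescope argument is the natural fleshing-out of that remark. The interior cancellations and the left endpoint are handled correctly: $\left(\begin{smallmatrix} I_q & -\rb_j \\ 0_q & I_q\end{smallmatrix}\right)\left(\begin{smallmatrix} I_q & \rb_{j+1} \\ 0_q & I_q\end{smallmatrix}\right)=\left(\begin{smallmatrix} I_q & \lb_{j} \\ 0_q & I_q\end{smallmatrix}\right)$ by (\ref{mm100a}), $\left(\begin{smallmatrix} I_q & 0_q \\ \tb_j & I_q\end{smallmatrix}\right)\left(\begin{smallmatrix} I_q & 0_q \\ -\tb_{j+1} & I_q\end{smallmatrix}\right)=\left(\begin{smallmatrix} I_q & 0_q \\ -\mb_{j+1} & I_q\end{smallmatrix}\right)$ by (\ref{mm200a}), together with $\rb_0=s_0=\lb_{-1}$ and $\mb_0=\tb_0$.

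The gap is at the right endpoint, which you correctly identify as the delicate spot but then argue away. The trailing conjugating factor does \emph{not} vanish and is not ``absorbed into the convention for the product's terminal factor''. In the odd case the telescoped product is $\overrightarrow{\prod_{k=0}^{n}}\left[\cdots\right]\left(\begin{smallmatrix} I_q & -\rb_n \\ 0_q & I_q\end{smallmatrix}\right)$, and the extra factor is genuinely nontrivial: already for $n=0$ the right-hand side of (\ref{00010}) is $\left(\begin{smallmatrix} I_q & s_0 \\ 0_q & I_q\end{smallmatrix}\right)\left(\begin{smallmatrix} I_q & 0_q \\ -(z-a)\mb_0 & I_q\end{smallmatrix}\right)$, whose $(1,2)$ block is $s_0$, whereas $\widetilde U_2^{(1)}=d^{(1)}$ has $(1,2)$ block $(z-a)s_0\mb_0 s_0$; the two sides differ precisely by right multiplication with $\left(\begin{smallmatrix} I_q & -\rb_0 \\ 0_q & I_q\end{smallmatrix}\right)$. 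The analogous leftover $\left(\begin{smallmatrix} I_q & 0_q \\ \tb_{\cdot} & I_q\end{smallmatrix}\right)$ survives in the even case. So your fallback ``check the base case against (\ref{U20d}) directly'' would refute rather than confirm the displayed identity. This residual triangular factor is not noise to be suppressed: it is exactly the piece that later merges with $A_2^{(m)}$ to produce the blocks $\mathscr{B}_2^{(2n)}$ and $\mathscr{B}_2^{(2n+1)}$ in Theorem \ref{mainT1} (note that $-\rb_n=-\Gamma_{1,n}^{-1}(a)\Theta_{1,n}(a)$ appears there verbatim). To close the argument you must either carry the trailing factor explicitly through the induction, or read (\ref{0009}) and (\ref{00010}) as holding only up to that right factor and say so.
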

 Now we derive a new representation of the RM of the THMM problem via
 DSM parameters
 in both cases for odd and even number of moments.
 We also reproduce an analogue representation given in
 \cite[Corollary 3]{abdonDS1}.
 To this end let us recall the DSM parameters first introduced in
 \cite{abODD}:
\begin{align}
 \Mb_0(a):=&s_0^{-1},
 \quad \Lb_0(a):=\widetilde u^*_{2,0}K_{2,0}^{-1}\widetilde
 u_{2,0}, \label{MM0a}\\
 \Mb_j(a):=&v_{j}^*R_{j}^*(a)H_{j}^{-1}R_{j}(a)v_{j}-
 v_{j-1}^*R_{j-1}^*(a)H_{1,j-1}^{-1}R_{j-1}(a)v_{j-1},\label{MMja}\\
\Lb_j(a):=&\widetilde u^*_{2,j}R_j^*(a)K_{2,j}^{-1}R_j(a)\widetilde
u_{2,j} -\widetilde
u^*_{2,j-1}R_{j-1}^*(a)K_{2,j-1}^{-1}R_{j-1}(a)\widetilde u_{2,j-1}.
\label{LLja}
\end{align}
\begin{thr}\label{mainT1}
  Let $P_{k,j}$, $Q_{k,j}$, $\Gamma_{k,j}$ and $\Theta_{k,j}$ be as in
Definitions \ref{de002} and \ref{def001}. Furthermore, let
 the RM $U^{(m)}$ of the THMM problem be as in (\ref{uujd}),
 (\ref{uujdeven}).\\
a)  
 Let $\lb_k$, $\mb_k$, be as in Definition  (\ref{mmm01})--(\ref{lllj}).
 Thus, the following representation of the resolvent
  matrix in the case of odd numbers
 of moments holds
 \begin{align}
  U^{(2n)}&(z,a,b)=\left(
                      \begin{array}{cc}
                        \frac{1}{(b-z)(z-a)}I_q & 0_q \\
                        0_q & I_q \\
                      \end{array}
                    \right)
\overrightarrow{\prod_{k=0}^{n-1}}
  \left[\left(
  \begin{array}{cc}
  I_q & (z-a)\lb_{k-1} \\
        0_q & I_q \\
         \end{array}
         \right)\left(
                       \begin{array}{cc}
                         I_q & 0_q \\
                         -\mb_k & I_q \\
                       \end{array}
                     \right)
                    \right]
                    \nonumber\\
                   &
                   \cdot\left(
  \begin{array}{cc}
  I_q & (z-a)\lb_{n-1} \\
        0_q & I_q \\
         \end{array}
         \right)\left(
                     \begin{array}{cc}
                       I_q & 0_q \\
                       Q_{2,n-1}^{-1}(a)P_{2,n-1}(a)
                       +\frac{1}{b-a}\Theta_{2,n}^{-1}(a)\Gamma_{2,n}(a)
                       & I_q \\
                     \end{array}
                   \right)\nonumber\\
 &\cdot
 \left(
                      \begin{array}{cc}
                        (b-a)(z-a)I_q & 0_q \\
                        0_q & \frac{b-z}{b-a}I_q \\
                      \end{array}
                    \right),\label{mf-2n}
                    \\
 U^{(2n+1)}&(z,a,b)=\left(
                      \begin{array}{cc}
                        \frac{1}{b-z}I_q & 0_q \\
                        0_q & I_q \\
                      \end{array}
                    \right)
\overrightarrow{\prod_{k=0}^{n}}
  \left[\left(\begin{array}{cc}
        I_q &  \lb_{k-1} \\
        0_q & I_q \\
        \end{array}  \right)
  \left( \begin{array}{cc}
           I_q & 0_q \\
           -(z-a)\mb_k & I_q \\
                       \end{array}
                     \right)
                     \right]\nonumber\\
                     &\cdot\left(
                       \begin{array}{cc}
                         I_q & -\Gamma_{1,n}^{-1}(a)
                         \Theta_{1,n}(a)-(b-a)P_{1,n+1}^{-1}(a)Q_{1,n+1}(a) \\
                         0_q & I_q
                       \end{array}
                     \right)
 \left(
                      \begin{array}{cc}
                        (b-z)I_q & 0_q \\
                        0_q & I_q \\
                      \end{array}
                    \right).\label{mf-2np1}
      \end{align}
b)  Moreover, let $\Mb_k$, $\Lb_k$ be as in (\ref{MM0a})--
(\ref{LLja}). Thus the following representations  hold:
 \begin{align}
  U^{(2n)}&(z,a,b)=\overrightarrow{\prod_{k=0}^{n-1}}
  \left[\left(
                       \begin{array}{cc}
                         I_q & 0_q \\
                         -(z-a) \Mb_k & I_q \\
                       \end{array}
                     \right)\left(
                              \begin{array}{cc}
                                I_q &  \Lb_k \\
                                0_q & I_q \\
                              \end{array}
                            \right)\right]
                             \left(
                       \begin{array}{cc}
                         I_q & 0_q \\
                         -(z-a) \Mb_n & I_q \\
                       \end{array}
                     \right)\nonumber\\
                           &\cdot\
                     \left(
                       \begin{array}{cc}
                         I_q & Q_{1,n}^*(a)P_{1,n}^{*^{-1}}(a)+\frac{1}{b-a}
                         \Theta_{1,n}^*(a)\Gamma_{1,n}^{*^{-1}}(a) \\
                         0_q & I_q \\
                       \end{array}
                     \right)
      \label{mf1a}
      \end{align}
      and
      \begin{align}
 U^{(2n+1)}&(z,a,b)=\left(
                      \begin{array}{cc}
                        \frac{1}{z-a}I_q & 0_q \\
                        0_q & I_q \\
                      \end{array}
                    \right)
 \overrightarrow{\prod_{k=0}^{n}}
  \left[\left(
                       \begin{array}{cc}
                         I_q & 0_q \\
                         -\Mb_k & I_q \\
                       \end{array}
                     \right)
                     \left(
                              \begin{array}{cc}
                                I_q &  (z-a)\Lb_k \\
                                0_q & I_q \\
                              \end{array}
                            \right)\right]\nonumber\\
                           &
                    \cdot
\left(
                       \begin{array}{cc}
                         I_q & 0_q \\
                         -\Gamma_{2,n}^*(a)\Theta_{2,n}^{*^{-1}}(a)-(b-a)
                         P_{2,n}^*(a)Q_{2,n}^{*^{-1}}(a) & I_q \\
                       \end{array}
                     \right) 
                     \left(
                      \begin{array}{cc}
                        (z-a)I_q & 0_q \\
                        0_q & I_q \\
                      \end{array}
                    \right).\label{mf2a}
\end{align}
\end{thr}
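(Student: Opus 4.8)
The plan is to establish all four identities by a single mechanism: rewrite the resolvent matrix $U^{(m)}$ through the auxiliary matrices, substitute their Blaschke--Potapov factorizations, replace each Blaschke--Potapov factor by its triangular decomposition in terms of the DSM parameters, and telescope the resulting ordered product; only the very last elementary factor then has to be matched to a polynomial quotient ``by hand''.

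Concretely, for part a) I would start from the reduction of Remark \ref{rem00140}, equivalently \eqref{eq29aa}, \eqref{eq29aa0}, which writes $U^{(2n)}$ (resp.\ $U^{(2n+1)}$) as a product of boundary diagonal factors, the auxiliary matrix $\widehat U_2^{(2n-2)}$ (resp.\ $\widetilde U_2^{(2n+1)}$), and the unipotent block--triangular factor carrying $N_{2,n}$ from \eqref{lei6.15} (resp.\ $B_{2,n}$ from \eqref{leipB12nm1}). Inserting the Blaschke--Potapov representation of Corollary \ref{cor-2np1} and then, via Theorem \ref{mth001-2np1new}, replacing $d^{(0)}$ and each $d^{(2k+2)}$ (resp.\ each $d^{(2k+1)}$) by the three--term product of elementary block--triangular matrices in \eqref{main1-2n} (resp.\ \eqref{main1-2np1}), one sees that two adjacent elementary factors of the same triangular type, coming from neighbouring Blaschke--Potapov factors, merge: the lower ones by $\tb_{k-1}-\tb_k=-\mb_k$ and the upper ones by $\rb_k-\rb_{k-1}=\lb_{k-1}$, i.e.\ exactly the additive relations \eqref{mm100a}, \eqref{mm200a} (with $\lb_{-1}=\rb_0=s_0$ accommodating $d^{(0)}$, $d^{(1)}$). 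This telescopes the product into the ordered products $\overrightarrow{\prod_{k}}[\,\cdots\,]$ displayed in \eqref{mf-2n}, \eqref{mf-2np1}, up to one leftover elementary factor at the right end.

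It then remains to absorb the factor carrying $N_{2,n}$ (resp.\ $B_{2,n}$) into that leftover factor and to recognise the resulting block. By \eqref{tQP2j} one has $\tb_{n-1}=Q_{2,n-1}^{-1}(a)P_{2,n-1}(a)$ (resp.\ $\rb_n=\Gamma_{1,n}^{-1}(a)\Theta_{1,n}(a)$), so the whole statement reduces to the two ``closing'' identities $N_{2,n}=\tfrac{1}{b-a}\,\Theta_{2,n}^{-1}(a)\Gamma_{2,n}(a)$ and $B_{2,n}=-(b-a)\,P_{1,n+1}^{-1}(a)Q_{1,n+1}(a)$. These are the genuinely nontrivial points: unlike the telescoping, which is purely formal once Theorem \ref{mth001-2np1new} and \eqref{mm100a}, \eqref{mm200a} are available, they force a passage between a bilinear form built from one Hankel family ($H_{1,n}$, resp.\ $K_{2,n}$) and orthogonal polynomials attached to the complementary family. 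I would prove them by unfolding the bilinear forms in \eqref{lei6.15}, \eqref{leipB12nm1} with the block recursions \eqref{rruub}, \eqref{uttj1} and the Schur--complement expansions \eqref{32aa}, \eqref{32aa-2np1} of $H_{2,j}^{-1}$, $K_{1,j}^{-1}$, then invoking the algebraic identities of the preceding sections (in particular Remarks \ref{leiprem001}, \ref{leiprem002q}) and the $z=a$ relations among $P_{k,j},Q_{k,j},\Gamma_{k,j},\Theta_{k,j}$ recorded in \eqref{pqHHa}, \eqref{pqHHa11} and in \cite{abKN}. A final, routine, point is to check that conjugating the telescoped product by the boundary diagonal factors returns the honest matrix polynomial $U^{(m)}$ of \eqref{uujd}, \eqref{uujdeven}, i.e.\ that the apparent poles at $z=a$ and $z=b$ cancel; this is forced by the staircase shape of the factors.

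Part b) is handled in exactly the same way, with $\widehat U_2^{(2n-2)}$, $\widetilde U_2^{(2n+1)}$ replaced by $\widetilde U_1^{(2n)}$, $\widetilde U_1^{(2n+1)}$ and the reductions $U^{(2j)}=\widetilde U_1^{(2j)}A^{(2j)}$, $U^{(2j+1)}=\operatorname{diag}(\tfrac1{z-a}I_q,I_q)\,\widetilde U_1^{(2j+1)}A^{(2j+1)}\,\operatorname{diag}((z-a)I_q,I_q)$ recalled in the Introduction: one substitutes the decompositions of $\widetilde U_1^{(m)}$ from \cite[Corollaries 1 and 2]{abdonDS1}, their triangular factorisations via $\Mb_j,\Lb_j$ from \cite[Corollary 3]{abdonDS1}, telescopes using the corresponding additive increments of $\Mb_j$ and $\Lb_j$ (see \eqref{MMja}, \eqref{LLja}), and matches the leftover factor to the polynomial quotients appearing in \eqref{mf1a}, \eqref{mf2a}; this last identification is essentially \cite[Corollary 3]{abdonDS1}, so part b) is mainly a matter of re-assembling known material in the present notation.
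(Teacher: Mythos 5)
Your proposal is correct and follows essentially the same route as the paper: reduce $U^{(m)}$ to the auxiliary matrices via \eqref{eq290z} and \eqref{eq29aa0}, insert the telescoped DSM factorizations \eqref{0009}, \eqref{00010} (which is exactly Corollary \ref{utildes}, obtained from Theorem \ref{mth001-2np1new} and the increments \eqref{mm100a}, \eqref{mm200a}), absorb the $N_{2,n}$ and $B_{2,n}$ factors into the last elementary block, and quote \cite[Corollary 3]{abdonDS1} for part b). The two ``closing'' identities you single out are indeed the only substantive remaining step; in the paper they come directly from the appendix identities \eqref{m100} and \eqref{uRj02} evaluated at $z=a$ together with the Hermitian symmetry of the bilinear forms, rather than from a fresh unfolding of the Schur-complement expansions.
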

\begin{proof} We prove part a).
 Equality (\ref{mf-2n}) is proved using (\ref{eq29aa}),
  (\ref{eq290z}), (\ref{lei6.15}) and  (\ref{0009}).
   In a similar manner one proves
   equality (\ref{mf-2np1}) by using (\ref{eq29aa0}),
  (\ref{00010}) and  (\ref{leipB12nm1}). 
  Part b) is proved in \cite[Corollary 3]{abdonDS1}.
\end{proof}


\section{Extremal solutions of the THMM problem via continued fractions
in terms of DSM parameters}
 As a consequence of the multiplicative representation of $U^{(2n)}$
 and $U^{(2n+1)}$ as in (\ref{mf-2n}) and (\ref{mf-2np1}), we attain
 a representation of the extremal solutions of the THMM problem
 through continued fractions in terms of DSM parameters.

 Set $\frac{A}{B}:=AB^{-1}$ for $A,\, B\in {\mathbb C}^{q\times q}$ with $B$
 invertible.
 \begin{defn}
  Let be $P_{k,n}$, $Q_{k,n}$ as in Definition
  \ref{de002}, and let $\Theta_{k,n}$, $\Gamma_{k,n}$ be as in Definition
 \ref{def001}.
%
 The following $q\times q$ matrix valued functions defined for all $z\in \C\setminus
[a,b]$:
  \begin{align}
  s_{\mathcal K}^{(2n)}(z):=& \frac{\Theta_{2,n}^*(\bar z)}{(z-a) \Gamma_{2,n}^*(\bar z)},
  \quad
    s_{\mathcal F}^{(2n)}(z):=\frac{\Theta_{1,n}^*(\bar z)}{(b-z)\Gamma_{1,n}^*(\bar
    z)},
    \label{extrem2n}
    \\
  s_{\mathcal K}^{(2n+1)}(z) :=& -\frac{Q_{2,n}^*(\bar z)}{(z-a)(b-z) P_{2,n}^*(\bar z)},
  \quad
    s_{\mathcal F}^{(2n+1)}(z)
    :=-\frac{Q_{1,n+1}^*(\bar z)}{P_{1,n+1}^*(\bar z)}
    \label{extrem2nmas1}
    \end{align}
are called the even (resp. odd)
  Krein and Friedrichs  extremal solutions of the THMM problem.
 \end{defn}
Note that  a justification of the adjective ``extremal" has been not
yet given. Such a justification will be
 presented in a forthcoming work. In \cite[Definition 4]{dyu0} the extremal solutions
 of the Stieltjes matrix moment problem  are the terminal matrices of
 a matrix interval.

 \vskip1mm
  To obtain a continued fraction representation of the extremal
  functions, we write a slightly different multiplicative
 representation of the RM $U^{(m)}$; see (\ref{mf-2n})--(\ref{mf2a}).
  Let us introduce some additional  notation:
\begin{align}
 \LL_k^{(2n)}:=&\left(
  \begin{array}{cc}
  0_q&I_q\\
  I_q & (z-a)\lb_{k}
         \end{array}
         \right), \quad
 \MM_k^{(2n)}:=\left(
                       \begin{array}{cc}
                         0_q & I_q \\
                         I_q&-\mb_k
                       \end{array}
                     \right),
                     \label{mmll000A}\\
 \LL_k^{(2n+1)}:=&\left(\begin{array}{cc}
        0_q& I_q\\
        I_q &  \lb_{k}
        \end{array}  \right),
         \quad
 \MM_k^{(2n+1)}:=
         \left( \begin{array}{cc}
           0_q & I_q \\
           I_q&-(z-a)\mb_k
                       \end{array}
                     \right),
                                \label{mmll000}\\
\LLC_k^{(2n)}:=&\left(
  \begin{array}{cc}
  0_q&I_q\\
  I_q & \Lb_{k}
         \end{array}
         \right),
          \quad
  \MMC_k^{(2n)}:=\left(
                       \begin{array}{cc}
                         0_q & I_q \\
                         I_q&-(z-a)\Mb_k
                       \end{array}
                     \right),
                     \label{mm300}\\
 \LLC_k^{(2n+1)}:=&\left(\begin{array}{cc}
        0_q& I_q\\
        I_q &  (z-a)\Lb_{k}
        \end{array}  \right),
         \quad
  \MMC_k^{(2n+1)}:=
         \left( \begin{array}{cc}
           0_q & I_q \\
           I_q&-\Mb_k
                       \end{array}
                     \right).
           \label{mm400}
 \end{align}
 and
  \begin{align}
   \mathscr{D}_1:=&\left(
                      \begin{array}{cc}
                        0_q&\frac{1}{(b-z)(z-a)}I_q \\
                        I_q & 0_q \\
                      \end{array}
                    \right),\quad
   \mathscr{D}_2:=\left(
                      \begin{array}{cc}
                        0_q&\frac{b-z}{b-a}I_q\\
                        (b-a)(z-a)I_q &0_q
                      \end{array}
                    \right),\label{D111}\\
\mathscr{D}_3:=&\left(
                      \begin{array}{cc}
                      0_q &\frac{1}{b-z}I_q\\
                        I_q & 0_q
                      \end{array}
                    \right),
                    \quad
{\bf D}_4:=\left(
                      \begin{array}{cc}
                        (b-z)I_q&0_q\\
                        0_q & I_q
                      \end{array}
                    \right), \nonumber 
                    \\
{\bf D}_5:=&\left(
                      \begin{array}{cc}
                      \frac{1}{z-a}I_q&0_q\\
                        0_q &I_q
                      \end{array}
                    \right),
                    \quad
\mathscr{D}_6:=\left(
                      \begin{array}{cc}
                        0_q&I_q\\
                       (z-a) I_q & 0_q
                      \end{array}
                    \right),\nonumber
                    \\
\mathscr{B}^{(2n)}_1:=&\left(
                     \begin{array}{cc}
                       0_q & I_q \\
                       I_q& Q_{1,n}^*(a)P_{1,n}^{*^{-1}}(a)+\frac{1}{b-a}
                         \Theta_{1,n}^*(a)\Gamma_{1,n}^{*^{-1}}(a)
                     \end{array}
                   \right), \nonumber
                    \\
\mathscr{B}^{(2n+1)}_1:=&\left(
                       \begin{array}{cc}
                        0_q& I_q\\
                        I_q&-\Gamma_{2,n}^*(a)\Theta_{2,n}^{*^{-1}}(a)-(b-a)
                         P_{2,n}^*(a)Q_{2,n}^{*^{-1}}(a)
                       \end{array}
                     \right), \nonumber
                     \\
\mathscr{B}^{(2n)}_2:=&\left(
                     \begin{array}{cc}
                       0_q & I_q \\
                       I_q&Q_{2,n-1}^{-1}(a)P_{2,n-1}(a)
                       +\frac{1}{b-a}\Theta_{2,n}^{-1}(a)\Gamma_{2,n}(a)
                     \end{array}
                   \right),
                   \label{b2n01}
                    \\
\mathscr{B}^{(2n+1)}_2:=&\left(
                       \begin{array}{cc}
                        0_q& I_q\\
                        I_q& -\Gamma_{1,n}^{-1}(a)
                         \Theta_{1,n}(a)-(b-a)P_{1,n+1}^{-1}(a)Q_{1,n+1}(a)
                       \end{array}
                     \right).\nonumber
  \end{align}
%
\begin{lem}\label{lem2016}
 Let the RM $U^{(2n)}$ and $U^{(2n+1)}$ be defined as in (\ref{uujd}) and
  (\ref{uujdeven}).
 The identities (\ref{0003}), (\ref{0004})
\begin{equation}
U^{(2n)}=\MMC_0^{(2n)}\LLC_0^{(2n)}\ldots \LLC_{n-1}^{(2n)}
\MMC_{n}^{(2n)}\mathscr{B}^{(2n)}_1  \label{0005}
 \end{equation}
and
\begin{equation}
 U^{(2n+1)}={\bf D}_5 \MMC_0^{(2n+1)}\LLC_0^{(2n+1)}\ldots
 \MMC_n^{(2n+1)}\LLC_{n}^{(2n+1)}\mathscr{B}^{(2n+1)}_1 \frak{D}_6
 \label{0006}
\end{equation}
hold.
\end{lem}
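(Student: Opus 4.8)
The plan is to reduce both identities to the multiplicative representations already proved in Theorem~\ref{mainT1}~b), namely (\ref{mf1a}) and (\ref{mf2a}), by introducing the fixed $2q\times 2q$ involution $J:=\left(\begin{smallmatrix}0_q&I_q\\I_q&0_q\end{smallmatrix}\right)$ with $J^{2}=I_{2q}$. The bookkeeping key is that every anti-diagonal block matrix occurring in (\ref{0005})--(\ref{0006}) is one of the unipotent block-triangular factors appearing in (\ref{mf1a})--(\ref{mf2a}), multiplied on the appropriate side by $J$. Indeed, a one-line block multiplication gives, for $0\le k\le n$,
\[
\MMC_k^{(2n)}=\begin{pmatrix}I_q&0_q\\-(z-a)\Mb_k&I_q\end{pmatrix}J,\quad
\LLC_k^{(2n)}=J\begin{pmatrix}I_q&\Lb_k\\0_q&I_q\end{pmatrix},\quad
\mathscr{B}^{(2n)}_1=J\begin{pmatrix}I_q&\widetilde c\\0_q&I_q\end{pmatrix},
\]
where $\widetilde c:=Q_{1,n}^{*}(a)P_{1,n}^{*^{-1}}(a)+\tfrac{1}{b-a}\Theta_{1,n}^{*}(a)\Gamma_{1,n}^{*^{-1}}(a)$, and analogously $\MMC_k^{(2n+1)}=\left(\begin{smallmatrix}I_q&0_q\\-\Mb_k&I_q\end{smallmatrix}\right)J$, $\LLC_k^{(2n+1)}=J\left(\begin{smallmatrix}I_q&(z-a)\Lb_k\\0_q&I_q\end{smallmatrix}\right)$, while ${\bf D}_5=\left(\begin{smallmatrix}\frac{1}{z-a}I_q&0_q\\0_q&I_q\end{smallmatrix}\right)$.

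Next I would substitute these factorizations into the right-hand side of (\ref{0005}): in the product $\MMC_0^{(2n)}\LLC_0^{(2n)}\cdots\LLC_{n-1}^{(2n)}\MMC_n^{(2n)}\mathscr{B}^{(2n)}_1$ every interior pair $J\,J$ collapses to $I_{2q}$, and what remains is precisely the string of unipotent triangular factors forming the right-hand side of (\ref{mf1a}); thus (\ref{0005}) is equivalent to (\ref{mf1a}), which is Theorem~\ref{mainT1}~b). For (\ref{0006}) the same telescoping turns ${\bf D}_5\,\MMC_0^{(2n+1)}\LLC_0^{(2n+1)}\cdots\MMC_n^{(2n+1)}\LLC_n^{(2n+1)}\mathscr{B}^{(2n+1)}_1\mathscr{D}_6$ into ${\bf D}_5$ times the $(n+1)$-fold product of unipotent pairs of (\ref{mf2a}), times $\mathscr{B}^{(2n+1)}_1\mathscr{D}_6$; the one extra point is the identity $\mathscr{B}^{(2n+1)}_1\mathscr{D}_6=\left(\begin{smallmatrix}I_q&0_q\\c&I_q\end{smallmatrix}\right)\left(\begin{smallmatrix}(z-a)I_q&0_q\\0_q&I_q\end{smallmatrix}\right)$, with $c:=-\Gamma_{2,n}^{*}(a)\Theta_{2,n}^{*^{-1}}(a)-(b-a)P_{2,n}^{*}(a)Q_{2,n}^{*^{-1}}(a)$, which shows that $\mathscr{B}^{(2n+1)}_1\mathscr{D}_6$ supplies exactly the trailing lower-triangular factor and the diagonal scaling of (\ref{mf2a}). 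Comparison with Theorem~\ref{mainT1}~b) then yields (\ref{0006}). The companion identities (\ref{0003})--(\ref{0004}) follow in the very same way, this time from Theorem~\ref{mainT1}~a), using the parallel factorizations $\LL_k^{(2n)}=J\left(\begin{smallmatrix}I_q&(z-a)\lb_k\\0_q&I_q\end{smallmatrix}\right)$, $\MM_k^{(2n)}=\left(\begin{smallmatrix}I_q&0_q\\-\mb_k&I_q\end{smallmatrix}\right)J$, $\mathscr{D}_1=\left(\begin{smallmatrix}\frac{1}{(b-z)(z-a)}I_q&0_q\\0_q&I_q\end{smallmatrix}\right)J$, $\mathscr{D}_2=J\left(\begin{smallmatrix}(b-a)(z-a)I_q&0_q\\0_q&\frac{b-z}{b-a}I_q\end{smallmatrix}\right)$, and the evident analogues in the odd case.

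The only genuinely delicate part of the argument is this endpoint bookkeeping: one must check that in each of the four products precisely the interior copies of $J$ cancel in pairs, that no stray $J$ is left over at either end, and that the boundary blocks $\mathscr{B}^{(m)}_1$, $\mathscr{B}^{(m)}_2$ together with the diagonal and anti-diagonal matrices ${\bf D}_5$, $\mathscr{D}_6$, ${\bf D}_4$, $\mathscr{D}_1$, $\mathscr{D}_2$, $\mathscr{D}_3$ reassemble, after absorbing the adjacent $J$, into exactly the boundary data of the representations in Theorem~\ref{mainT1}. Since all of this is elementary $2q\times 2q$-block matrix algebra with no new analytic input, once the handful of one-line factorizations above has been recorded the proof is immediate.
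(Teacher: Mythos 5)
Your proposal is correct and is essentially the paper's own argument: the paper likewise reduces the lemma to the products in Theorem \ref{mainT1} by checking that each adjacent pair of anti-diagonal factors (e.g.\ $\mathscr{D}_1\LL_{-1}^{(2n)}$, $\MM_k^{(2n)}\LL_k^{(2n)}$, $\mathscr{B}_2^{(2n)}\mathscr{D}_2$) equals the corresponding pair of triangular/diagonal factors in (\ref{mf-2n})--(\ref{mf2a}); your mediation through the involution $J$ is just a tidy way of organizing that same pairwise cancellation.
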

\begin{proof} We prove (\ref{0003}). By using (\ref{D111}), (\ref{mmll000A}) and (\ref{b2n01})
 clearly the following equalities are valid:
\begin{align*}
\mathscr{D}_1 \LL_{-1}^{(2n)}=&
 \left(
   \begin{array}{cc}
     \frac{1}{(b-z)(z-a)}I_q & 0_q \\
     0_q & I_q
   \end{array}
 \right)\left(
          \begin{array}{cc}
            I_q & (z-a)\lb_{-1} \\
            0_q & I_q
          \end{array}
        \right)
        \\
\MM_k^{(2n)} \LL_k^{(2n)}=&
        \left(
          \begin{array}{cc}
            I_q & 0_q \\
            -\mb_k & I_q
          \end{array}
        \right)
        \left(
          \begin{array}{cc}
            I_q & (z-a)\lb_k \\
            0_q & I_q
          \end{array}
        \right),
        \\
\mathscr{B}^{(2n)}_2\mathscr{D}_2=&
 \left(
                     \begin{array}{cc}
                       I_q & 0_q \\
                       Q_{2,n-1}^{-1}(a)P_{2,n-1}(a)
                       +\frac{1}{b-a}\Theta_{2,n}^{-1}(a)\Gamma_{2,n}(a)
                       & I_q \\
                     \end{array}
                   \right)\\
                   &\cdot
                   \left(
                      \begin{array}{cc}
                        (b-a)(z-a)I_q & 0_q \\
                        0_q & \frac{b-z}{b-a}I_q \\
                      \end{array}
                    \right).
 \end{align*}
 The latter equalities along with (\ref{mf-2n}) imply (\ref{0003}).
 In a similar manner ones proves (\ref{0004}), (\ref{0005}) and
 (\ref{0006}).
\end{proof}


 Following \cite[Page 11]{simon}, we consider
 $\C^{q\times q}\bigotimes \C^{q\times q}$
 as a right module over $\C^{q\times q}$

For $\mathfrak U:=\left(
                    \begin{array}{cc}
                      A & B \\
                      C & D
                    \end{array}
                  \right)$ define the transformation
\begin{equation}
{\bf \mathscr{T}}_{\mathfrak U}\left(
                   \begin{array}{c}
                   X \\   Y \\
                  \end{array} \right)
                  =\left(
                            \begin{array}{c}
                              (AX+BY)(CX+DY)^{-1} \\
                              I_q
                            \end{array}
                          \right)\label{XX1}
\end{equation} if $CX+DY$ is invertible.
\begin{thr}\label{mainT2} Let $Q_{k,j}$, $P_{k,j}$, $\Gamma_{k,j}$ and
$\Theta_{k,j}$ for $k=1,2$ be as in Definition \ref{de002} and
Definition \ref{def001}.
 The extremal solutions (\ref{extrem2n}) and (\ref{extrem2nmas1})
 of the THMM problem can be represented via
 finite matrix continued fractions:
 \begin{align}
 &\frac{\Theta_{1,n}^*(\bar z)}{(b-z)\Gamma_{1,n}^*(\bar
z)}\nonumber\\
&=\frac{s_0}{b-z} +\cfrac{I_q}{-(z-a)(b-z)\mb_0+\cfrac{I_q}
{\frac{1}{b-z}\lb_0+\cfrac{I_q}{\ddots \,
 -(z-a)(b-z)\mb_{n-1}+\cfrac{I_q}{\frac{1}{b-z}\lb_{n-1}
} }}}, \label{cc23zz}\\
 &\frac{\Theta_{2,n}^*(\bar z)}{(z-a)\Gamma_{1,n}^*(\bar z)}=
 -\cfrac{I_q}{-(z-a)\Mb_0+\cfrac{I_q}{\Lb_0
 +\cfrac{I_q}{\ddots 
 +L_{n-1}+
 \cfrac{I_q}{
-(z-a)\Mb_n
}}}}, \label{cc13zz}
 \end{align}
\begin{align}
 &-\frac{Q_{1,n+1}^*(\bar z)}{P_{1,n+1}^*(\bar z)}=
 \cfrac{I_q}{-(z-a)\Mb_0+\cfrac{I_q}{\Lb_0
 +\cfrac{I_q}{\ddots 
 +L_{n-1}+
 \cfrac{I_q}{
-(z-a)\Mb_{n}+\Lb_{n}^{-1}}}}}, \label{cc13z}
\\
&\frac{Q_{2,n}^*(\bar z)}{(z-a)(b-z)P_{2,n}^*(\bar
z)}\nonumber\\
&=\frac{s_0}{b-z}
+\cfrac{I_q}{-(z-a)(b-z)\mb_0+\cfrac{I_q}{\frac{1}{b-z}\lb_0+\cfrac{I_q}{\ddots
\,
 \frac{1}{b-z}\lb_{n-1}+\cfrac{I_q}{-(z-a)(b-z)\mb_n
} }}}. \label{cc23z}
 \end{align}
\end{thr}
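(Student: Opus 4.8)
The plan is to derive each continued fraction from the corresponding multiplicative representation of the RM obtained in Theorem \ref{mainT1} and Lemma \ref{lem2016}, by "unwinding" the product of $2\times 2$ block matrices via the Möbius-type transformation $\mathscr{T}_{\mathfrak U}$ of \eqref{XX1}. First I would recall the elementary fact that $\mathscr{T}_{\mathfrak U_1\mathfrak U_2}=\mathscr{T}_{\mathfrak U_1}\circ\mathscr{T}_{\mathfrak U_2}$, so that a product of block matrices corresponds to a composition of linear fractional maps, exactly as in the scalar case \cite[Theorem 12.1a]{hen}. Applying $\mathscr{T}$ to the pair $\bigl(\begin{smallmatrix}0_q\\ I_q\end{smallmatrix}\bigr)$ at the right end of the product picks out, for the even case, the ratio $\beta^{(2n)}\delta^{(2n)^{-1}}$ (respectively $\alpha^{(2n)}\gamma^{(2n)^{-1}}$ when one starts from $\bigl(\begin{smallmatrix}I_q\\ 0_q\end{smallmatrix}\bigr)$), and similarly in the odd case; by the defining formulas \eqref{uujd}, \eqref{uujdeven} these ratios are precisely the extremal functions \eqref{extrem2n}, \eqref{extrem2nmas1}. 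Thus the left-hand sides of \eqref{cc23zz}--\eqref{cc23z} are $\mathscr{T}$ applied to the relevant factorized product.

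Next I would compute the action of each individual factor. Using \eqref{mmll000A}--\eqref{mm400}, each of $\LL_k^{(\cdot)}$, $\MM_k^{(\cdot)}$, $\LLC_k^{(\cdot)}$, $\MMC_k^{(\cdot)}$ is an anti-diagonal-plus-one-entry block matrix of the form $\bigl(\begin{smallmatrix}0_q & I_q\\ I_q & c\end{smallmatrix}\bigr)$, and the associated map sends $X\mapsto (c+X)^{-1}$ — i.e. it contributes one level $\cfrac{I_q}{c+\cdots}$ of the continued fraction, with $c$ equal to $-(z-a)\mb_k$, $(z-a)\lb_k$, etc., depending on the factor. The diagonal/anti-diagonal conjugating matrices $\mathscr{D}_1,\mathscr{D}_2,\mathscr{D}_3,{\bf D}_4,{\bf D}_5,\mathscr{D}_6$ and the boundary factors $\mathscr{B}_2^{(2n)}$, $\mathscr{B}_2^{(2n+1)}$ (for part a)) or $\mathscr{B}_1^{(\cdot)}$ (for part b)) account for the prefactors $\frac{s_0}{b-z}$, the leading signs, the scalings $\frac{1}{b-z}$ and $-(z-a)(b-z)$ inside the fraction, and the closing term $\Lb_n^{-1}$ in \eqref{cc13z}. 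So the strategy is: (1) rewrite the product in Lemma \ref{lem2016} / Theorem \ref{mainT1} so the conjugating matrices are absorbed into the first and last factors; (2) read off the continued fraction by iterating $X\mapsto(c+X)^{-1}$; (3) match the outermost conjugation with the stated prefactor. For the even-$m$ fractions \eqref{cc23zz} and \eqref{cc23z} one works from \eqref{mf-2n} (equivalently \eqref{0003}); for \eqref{cc13zz} and \eqref{cc13z} one works from part b), i.e. \eqref{mf1a} and \eqref{mf2a} (equivalently \eqref{0005}, \eqref{0006}), using the DSM parameters $\Mb_k,\Lb_k$.

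Concretely, I would organize the write-up around a single lemma: if $\mathfrak U=\mathscr{C}\,\prod_{k}\bigl[\,\bigl(\begin{smallmatrix}0_q&I_q\\ I_q&a_k\end{smallmatrix}\bigr)\bigl(\begin{smallmatrix}0_q&I_q\\ I_q&b_k\end{smallmatrix}\bigr)\bigr]\mathscr{B}\,\mathscr{D}$ with $\mathscr{C}$, $\mathscr{D}$ block-diagonal or block-anti-diagonal, then $\mathscr{T}_{\mathfrak U}\bigl(\begin{smallmatrix}0_q\\ I_q\end{smallmatrix}\bigr)$ equals a scalar-scaled continued fraction in the $a_k,b_k$ plus a scalar shift coming from $\mathscr{C}$. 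Then each of \eqref{cc23zz}--\eqref{cc23z} follows by substituting the appropriate data from Theorem \ref{mainT1}: e.g. $a_k,b_k\in\{(z-a)\lb_{k-1},-\mb_k\}$ together with $\mathscr{D}_1,\mathscr{D}_2$ and $\mathscr{B}_2^{(2n)}$ for \eqref{cc23zz}, after noting $Q_{2,n-1}^{-1}(a)P_{2,n-1}(a)=\tb_{n-1}$ and $\Gamma_{1,n}^{-1}(a)\Theta_{1,n}(a)=\rb_n$ via \eqref{tQP2j} to identify the last closing block. The main obstacle I anticipate is bookkeeping: the conjugating factors carry the scalar weights $(z-a)$, $(b-z)$, $(b-a)$ in a way that gets distributed unevenly across the continued fraction levels (note $-(z-a)(b-z)\mb_k$ appears multiplied together while $\frac{1}{b-z}\lb_k$ appears with only one factor), so one must track carefully how $\mathscr{D}_1$ and $\mathscr{D}_2$ — which are \emph{anti}-diagonal — interact with the first $\LL$ and last $\MM$ factor, and in particular how the anti-diagonal swap turns a left scaling into a right scaling at the next level. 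The odd-$m$ case \eqref{cc13z} has the additional subtlety that the closing factor $\mathscr{B}_1^{(2n+1)}$ together with $\mathscr{D}_6$ produces the extra innermost term $\Lb_n^{-1}$; checking that this is consistent with $-\Gamma_{2,n}^*(a)\Theta_{2,n}^{*^{-1}}(a)-(b-a)P_{2,n}^*(a)Q_{2,n}^{*^{-1}}(a)$ collapsing appropriately is the one spot where a short genuine computation (rather than pure bookkeeping) is needed. Everything else is a routine, if lengthy, unwinding of the matrix products already established.
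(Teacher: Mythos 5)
Your proposal is correct and follows essentially the same route as the paper: apply the transformation \eqref{XX1} at $\begin{posmallmatrix}0_q\\I_q\end{posmallmatrix}$ or $\begin{posmallmatrix}I_q\\0_q\end{posmallmatrix}$ to both sides of the factorizations from Theorem \ref{mainT1} and Lemma \ref{lem2016}, use the composition property of the M\"{o}bius transformation so that each factor $\bigl(\begin{smallmatrix}0_q&I_q\\I_q&c\end{smallmatrix}\bigr)$ contributes one level $X\mapsto(c+X)^{-1}$, and absorb the diagonal and anti-diagonal conjugating matrices into the prefactors. The one small correction is that \eqref{cc23z} is obtained from the $U^{(2n+1)}$ factorization \eqref{0004} (with $\mathfrak U=U^{(2n+1)}{\bf D}_4^{-1}$), not from \eqref{mf-2n}/\eqref{0003}.
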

\begin{proof} We prove (\ref{cc23zz}).
 We apply the transformation (\ref{XX1}) at
  $\begin{posmallmatrix}X\\Y\end{posmallmatrix}
=\begin{posmallmatrix}0_q\\I_q\end{posmallmatrix}$
 to both sides of (\ref{mf-2n}), more precisely,
 on the left side
${\mathfrak U}=U^{(2n)}$, however, on the right side
 ${\mathfrak U}$ is equal to $\mathscr{D}_1 \LL_{-1}^{(2n)}\MM_0^{(2n)}\ldots
\MM_{n-1}^{(2n)} \LL_{n-1}^{(2n)}\mathscr{B}^{(2n)}_2
\mathscr{D}_2$.
 Taking into account the composition property of the
 M\"{o}bius transformation
 $${\bf \mathscr{T}}_{U^{(2n)}}={\bf \mathscr{T}}_{\mathscr{D}_1}\circ
 {\bf \mathscr{T}}_{\MM_0^{(2n)}}\circ{\bf \mathscr{T}}_{\LL_0^{(2n)}}
 \circ\ldots
 \circ{\bf \mathscr{T}}_{\MM_{n-1}^{(2n)}}
 {\bf \mathscr{T}}_{\mathscr{B}^{(2n)}_2}\circ
 {\bf \mathscr{T}}_{\mathscr{D}_2},
$$
 we attain the following equality:
 \begin{align*}
 &\left(\begin{array}{c}
     \frac{1}{b-z} \Theta_{1,n}^*(\bar z)\Gamma_{1,n}^{-1^*}(\bar
z)\\ I_q\end{array} \right)
=\\
 &
 \left(
   \begin{array}{c}
    \frac{1}{b-z}\lb_{-1}+ (-(b-z)(z-a)\mb_0+(\frac{1}{b-z}\lb_0
     +\ldots
     +(-(b-z)(z-a)\mb_n+\frac{1}{b-z}\lb_n^{-1})^{-1}\ldots)^{-1}\\
     I_q \\
   \end{array}
 \right).
\end{align*}
 Equality (\ref{cc23zz}) is proved.
 Equality (\ref{cc13zz})
  can be verified applying the M\"{o}bius transformation $\bf
  \mathscr{T}$ to both sides of (\ref{0005}) at
  $\begin{posmallmatrix}X\\Y\end{posmallmatrix}
=\begin{posmallmatrix}I_q\\0_q\end{posmallmatrix}$.
  In a similar way, to prove (\ref{cc13z}) one uses (\ref{0006})
at
  $\begin{posmallmatrix}X\\Y\end{posmallmatrix}
=\begin{posmallmatrix}0_q\\I_q\end{posmallmatrix}$.
 To verify (\ref{cc23z}) we employ the transformation $\mathscr{T}$
 to equality (\ref{0004}), more precisely,
  from one side for ${\mathfrak U}=
  U^{(2n+1)}{\bf D}_4^{-1}$
   on the other side ${\mathfrak U}$ is equal to
 $\MMC_0^{(2n+1)}\LLC_0^{(2n+1)}\ldots
 \MMC_n^{(2n+1)}\LLC_{n}^{(2n+1)}\mathscr{B}^{(2n+1)}_1 \frak{D}_6
 $ at
  $\begin{posmallmatrix}X\\Y\end{posmallmatrix}
=\begin{posmallmatrix}I_q\\0_q\end{posmallmatrix}$.
\end{proof}

\section{Relations between the OMP, DSM parameters and Schur complements}
In this section, we obtain explicit relations between the OMP on the
interval $[a,b]$ (as in Definitions \ref{de002} and \ref{def001}),
 the  DSM parameters (\ref{mmm01})-(\ref{lllj}) and the Schur
complements $\widehat H_{2,j}$ and $\widehat K_{1,j}$; see
 (\ref{lkk2}) and (\ref{lkk3}).
\begin{prop}
\label{propa1new} Let the polynomials $P_{2,j}$, $Q_{2,j}$,
$\Gamma_{1,j}$ and $\Theta_{1,j}$  be defined as in Definitions
\ref{de002} and \ref{def001}. Let the DSM parameters $\mb_j$ and
$\lb_j$ be as in (\ref{mmm01})-(\ref{lllj}).
 The following identities then hold:
\begin{align}
 Q_{2,j}(a)=&(-1)^{j}
 \mb_{0}^{-1}\lb_{0}^{-1}\mb_{1}^{-1}
 \lb_{1}^{-1}\cdots \lb_{j-1}^{-1}\mb_{j}^{-1},
  \ \ 0\leq j\leq n-1,\label{f022j}
  \\
 \Gamma_{1,j}(a)=&(-1)^{j}
 \mb_{0}^{-1}\lb_{0}^{-1}\mb_{1}^{-1}
 \lb_{1}^{-1}\cdots \lb_{j-1}^{-1}, \ \
 1\leq j\leq n,\label{f0021j}
 \\
  P_{2,j}(a)=&(-1)^{j}
  \mb_0^{-1}\lb_0^{-1} \mb_1^{-1} \lb_1^{-1}\cdots
  \lb_{j-1}^{-1}\mb_j^{-1} 
  (\mb_0+\mb_1+\ldots+\mb_{j-1}+\mb_j), \ \ 1\leq j\leq n-1
  \label{f021j}
\end{align}
and
\begin{align}
 \Theta_{1,j}(a)=
 &(-1)^{j} \mb_{0}^{-1}\lb_{0}^{-1}\mb_{1}^{-1}\lb_{1}^{-1}
 \cdots \mb_{j-1}^{-1}\lb_{j-1}^{-1}
(s_0+\lb_0+\ldots +\lb_{j-1}),\ \ 1\leq j\leq n. \label{f012j}
\end{align}
\end{prop}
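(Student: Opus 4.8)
The plan is to prove the four identities by induction on $j$, exploiting the factorizations established in Theorem \ref{mthaaleip} together with the explicit form of the Blaschke--Potapov factors from Theorem \ref{mth001-2np1new}. The starting observation is the pair of relations in (\ref{tQP2j}), namely $\rb_j = \Gamma_{1,j}^{-1}(a)\Theta_{1,j}(a)$ and $\tb_j = Q_{2,j}^{-1}(a) P_{2,j}(a)$, which let me pass freely between the polynomials evaluated at $z=a$ and the DSM parameters. First I would note that (\ref{f0021j}) and (\ref{f022j}) are the backbone: once those are known, (\ref{f021j}) follows from $P_{2,j}(a) = Q_{2,j}(a)\tb_j$ (rearranging (\ref{tQP2j})) together with $\tb_j = \mb_0+\mb_1+\dots+\mb_j$ (which is the telescoped form of (\ref{mm200a}), using $\tb_0 = \mb_0$), and (\ref{f012j}) follows from $\Theta_{1,j}(a) = \Gamma_{1,j}(a)\rb_j$ together with $\rb_j = s_0 + \lb_0 + \dots + \lb_{j-1}$ (the telescoped form of (\ref{mm100a}), using $\rb_0 = s_0$ and $\lb_{-1} = s_0$ — note the index bookkeeping here needs care).

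So the heart of the matter is establishing (\ref{f0021j}) and (\ref{f022j}) by induction. For the base case $j=0$: $\Gamma_{1,0}(a) = I_q$ by Definition \ref{def001}, matching the empty product in (\ref{f0021j}) for $j=1$ only after one step, so I would actually start the induction carefully — $\Gamma_{1,1}(a)$ should equal $-\mb_0^{-1}$, and $Q_{2,0}(a)$ should equal $-\mb_0^{-1}$ as well (the $j=0$ case of (\ref{f022j})); these I would check directly from the definitions and from $\mb_0 = \tb_0 = v_0^* R_0^*(a) K_{1,0}^{-1} R_0(a) v_0 = s_0^{-1}\cdot(\text{something})$, or more cleanly from (\ref{mm01-2np1}): $\mb_j = \Gamma_{1,j}^*(a)\widehat K_{1,j}^{-1}\Gamma_{1,j}(a)$ and $\lb_j = Q_{2,j}^*(a)\widehat H_{2,j}^{-1} Q_{2,j}(a)$. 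For the inductive step, the key mechanism is the recursion $\widehat U_2^{(2j)} = \widehat U_2^{(2j-2)} d^{(2j+2)}$ and $\widetilde U_2^{(2j+1)} = \widetilde U_2^{(2j-1)} d^{(2j+1)}$ from (\ref{leip12n})--(\ref{leip1aa}): evaluating appropriate block entries at $z=a$ (where each factor $d^{(k)}(a)$ degenerates because of the $(z-a)$ prefactors, collapsing to a unipotent matrix) should give recursions relating $\Gamma_{1,j}(a)$ to $\Gamma_{1,j-1}(a)$ and similarly for $Q_{2,j}(a)$. Alternatively — and I think this is cleaner — I would combine (\ref{mm01-2np1}) with the second-kind-polynomial identities (\ref{pqHHa})--(\ref{pqHHa11}), e.g. $\widehat K_{1,j} = \Gamma_{1,j}(a) Q_{2,j}^*(a)$, to get $\mb_j = \Gamma_{1,j}^*(a) (\Gamma_{1,j}(a) Q_{2,j}^*(a))^{-1}\Gamma_{1,j}(a) = \Gamma_{1,j}^*(a) Q_{2,j}^{-*}(a)$. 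Then combining with $\tb_j = Q_{2,j}^{-1}(a) P_{2,j}(a)$ and the analogous $\lb_j = Q_{2,j}^*(a)\widehat H_{2,j}^{-1} Q_{2,j}(a)$ with $\widehat H_{2,j} = -Q_{2,j}(a)\Gamma_{1,j+1}^*(a)$ should pin down the recursive relationship between consecutive polynomials.

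The main obstacle I anticipate is the index-juggling: there are four interleaved sequences ($\Gamma_{1,j}$, $\Theta_{1,j}$, $P_{2,j}$, $Q_{2,j}$) with shifted ranges of validity ($0\le j\le n-1$ versus $1\le j\le n$), the DSM parameters carry a shifted index $\lb_{-1}=s_0$, and the alternating sign $(-1)^j$ has to be tracked through each application of $\mb_k^{-1}$ or $\lb_k^{-1}$. I would set up the induction so that the statement proved at stage $j$ bundles (\ref{f0021j}) and (\ref{f022j}) together (since each feeds the other through the recursion), and only at the end deduce (\ref{f021j}) and (\ref{f012j}) as corollaries via the telescoping sums. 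A secondary technical point is verifying that all the relevant matrices ($\Gamma_{1,j}(a)$, $Q_{2,j}(a)$, the Schur complements, the DSM parameters) are invertible so that the manipulations are legitimate — but this is guaranteed by the non-degeneracy hypothesis (positive definiteness of $H_{2,j}$, $K_{1,j}$ and hence of $\widehat H_{2,j}$, $\widehat K_{1,j}$, $\lb_j$, $\mb_j$) recorded earlier in the paper.
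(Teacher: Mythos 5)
Your plan is correct and non-circular, but it follows a genuinely different route from the paper. The paper's proof is analytic in flavour: it rewrites (\ref{eq29aa}) and (\ref{eq29aa0}), expands the block entries of the resolvent matrix in powers of $(z-a)$ using the multiplicative DSM representations (\ref{0009}) and (\ref{00010}), obtains the series (\ref{neu11p})--(\ref{neu22i}), and then reads off the four identities by matching the top coefficient of $(z-a)^j$ against the fact that $P_{2,j}$ and $\Gamma_{1,j}$ are monic. You instead work purely algebraically at $z=a$: combining $\mb_j=\Gamma_{1,j}^*(a)\widehat K_{1,j}^{-1}\Gamma_{1,j}(a)$ and $\lb_j=Q_{2,j}^*(a)\widehat H_{2,j}^{-1}Q_{2,j}(a)$ from (\ref{mm01-2np1}) with the Schur-complement factorizations $\widehat K_{1,j}=\Gamma_{1,j}(a)Q_{2,j}^*(a)$ and $\widehat H_{2,j}=-Q_{2,j}(a)\Gamma_{1,j+1}^*(a)$ from (\ref{pqHHa})--(\ref{pqHHa11}) (plus Hermiticity of $\mb_j$, $\lb_j$, which you should invoke explicitly to flip $\Gamma_{1,j}^*(a)Q_{2,j}^{*^{-1}}(a)$ into $Q_{2,j}^{-1}(a)\Gamma_{1,j}(a)$) yields the two-term recursions $Q_{2,j}(a)=\Gamma_{1,j}(a)\mb_j^{-1}$ and $\Gamma_{1,j+1}(a)=-Q_{2,j}(a)\lb_j^{-1}$, which iterate from $\Gamma_{1,0}(a)=I_q$ to give (\ref{f022j}) and (\ref{f0021j}); then (\ref{f021j}) and (\ref{f012j}) follow from (\ref{tQP2j}) together with the telescoped sums $\tb_j=\sum_{k=0}^{j}\mb_k$ and $\rb_j=s_0+\sum_{k=0}^{j-1}\lb_k$. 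In effect you prove the identities of Remark \ref{remaa1new} first, from facts established before the Proposition, and deduce the Proposition from them, whereas the paper goes the other way; your route is shorter and avoids the series bookkeeping, while the paper's expansion also delivers the auxiliary identities (\ref{nuw1})--(\ref{nuw2}) as by-products. One small correction: the base case of (\ref{f0021j}) is $\Gamma_{1,1}(a)=-\mb_0^{-1}\lb_0^{-1}$ (the product in (\ref{f0021j}) for $j=1$ runs through $\lb_0^{-1}$), not $-\mb_0^{-1}$ as you wrote; your own recursion $\Gamma_{1,1}(a)=-Q_{2,0}(a)\lb_0^{-1}=-\mb_0^{-1}\lb_0^{-1}$ produces the right value, so this is only a slip in the plan, not in the mechanism.
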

\begin{proof}
 By rewriting the equality (\ref{eq29aa}) in the form
 \begin{align*}
 \left(
   \begin{array}{cc}
     (z-a)(b-z)I_q & 0_q \\
     0_q & I_q
   \end{array}
 \right)U^{(2j)}=\left(
                  \begin{array}{cc}
                    \frac{1}{(z-a)(b-z)}I_q & 0_q \\
                    0_q & \frac{b-a}{b-z}I_q
                  \end{array}
                \right)\widehat U_2^{(2j-2)}\left(
                         \begin{array}{cc}
                           I_q & 0_q \\
                           N_{2,j} & I_q
                         \end{array}
                       \right),
 \end{align*}
 by expanding the right hand side of this equality
  in powers of $(z-a)$ and by employing (\ref{uujd}), (\ref{0009}),
   (\ref{lei6.15}),
    we have
\begin{align}
 \frac{b-z}{b-a}\Theta_{2,j}^*(\bar z)\Theta_{2,j}^{*^{-1}}(a)
 =&I_q+\ldots+
 (-1)^{j}(z-a)^{j+1} s_0\mb_0\lb_0\cdots \mb_j\lb_j\nonumber\\
 &\cdot
 \left(P_{2,j-1}^{*}(a)Q_{2,j-1}^{*^{-1}}(a)+
 \frac{1}{b-a}\Gamma_{2,j}^{*}(a)\Theta_{2,j}^{*^{-1}}(a)
\right),
 \label{neu11p}\\
 \Theta_{1,j}^*(\bar z)\Gamma_{1,j}^{*^{-1}}(a)
 =
 &s_0+\lb_0+\ldots+\lb_{j-1}+\cdots+
  (-1)^j(z-a)^js_0\mb_0\lb_0\cdots \lb_{j-1},
\label{neu12p}
\end{align}
\begin{align} \frac{1}{b-a} \Gamma_{2,j}^*(\bar
 z)\Theta_{2,j}^{*^{-1}}(a)=&-(\mb_0+\ldots+\mb_{j-1})
 \nonumber\\
 &+\left(P_{2,j-1}^{*}(a)Q_{2,j-1}^{*^{-1}}(a)+
 \frac{1}{b-a}\Gamma_{2,j}^{*}(a)\Theta_{2,j}^{*^{-1}}(a)
\right)\nonumber\\
&+\ldots +(-1)^{j}(z-a)^j \mb_0\lb_0\cdots \mb_{j-1}
\lb_{j-1}\nonumber\\
&\cdot\left(P_{2,j-1}^{*}(a)Q_{2,j-1}^{*^{-1}}(a)+
 \frac{1}{b-a}\Gamma_{2,j}^{*}(a)\Theta_{2,j}^{*^{-1}}(a)
\right), \label{neu21p}
\\
 \Gamma_{1,j}^*(\bar z)\Gamma_{1,j}^{*^{-1}}(a)=&I_q+\ldots
 +(-1)^{j}(z-a)^{j} \mb_0\lb_0 \mb_1\lb_1\cdots
\mb_{j-1}\lb_{j-1}. 
 \label{neu22p}
\end{align}
Similarly by rewriting (\ref{eq29aa0}) in the form
$$
\left(
  \begin{array}{cc}
    (b-z)I_q & 0_q \\
    0_q & I_q
  \end{array}
\right)U^{(2j+1)}\left(
                   \begin{array}{cc}
                     \frac{1}{b-z}I_q & 0_q \\
                     0_q & I_q
                   \end{array}
                 \right)=\widetilde U_2^{(2j+1)} A_2^{(2j+1)}
$$
and by using (\ref{uujdeven}),   (\ref{00010}), 
(\ref{leipB12nm1}) and (\ref{uRj02}), we have
\begin{align}
 Q_{2,j}^*(\bar z)Q_{2,j}^{*^{-1}}(a)=&
 I_q+\ldots+(-1)^{j+1}(z-a)^{j+1}s_0\mb_0\lb_0\mb_1\lb_1\cdots
\lb_{j-1}\mb_j,
\label{neu11i}
\\
(z-b)Q_{1,j+1}^*(\bar z)P_{1,j+1}^{*^{-1}}(a)=
 &s_0+\lb_0+\ldots+\lb_{j-1}-\Theta_{1,j}^*(a)\Gamma_{1,j}^{*^{-1}}(a)\nonumber\\
 &-(b-a)Q_{1,j+1}^*(a)P_{1,j+1}^{*^{-1}}(a)
 \nonumber\\
 &+\ldots+(-1)^{j+1}(z-a)^j s_0\mb_0 \lb_0\mb_1\lb_1\cdots
\lb_{j-1}\mb_j\nonumber\\
&\cdot (-\Theta_{1,j}^*(a)\Gamma_{1,j}^{*^{-1}}(a)
 -(b-a)Q_{1,j+1}^*(a)P_{1,j+1}^{*^{-1}}(a))\nonumber
 \\
 &+\ldots+ (-1)^{j+1}(z-j)^{j+1}s_0\mb_0 \lb_0\mb_1\lb_1\cdots
\lb_{j-1}\mb_j\nonumber\\
&\cdot(-\Theta_{1,j}^*(a)\Gamma_{1,j}^{*^{-1}}(a)
 -(b-a)Q_{1,j+1}^*(a)P_{1,j+1}^{*^{-1}}(a)),
\label{neu12i}
\\
 P_{2,j}^*(\bar z)Q_{2,j}^{*^{-1}}(a)=&\mb_0+\mb_1+\ldots+\mb_{j-1}
 \nonumber\\
 &+\ldots+ (-1)^j(z-a)^j\mb_0\lb_0\mb_1\lb_1\cdots\lb_{j-1}\mb_j,
\label{neu21i}
\\
 P_{1,j+1}^*(\bar z)P_{1,j+1}^{*^{-1}}(a)=&I_q+\ldots
 +(-1)^{j+1}(z-a)^{j+1} \mb_0\lb_0\mb_1\lb_1\cdots
\lb_{j-1}\mb_j \nonumber\\
&\cdot (-\Theta_{1,j}^*(a)\Gamma_{1,j}^{*^{-1}}(a)
 -(b-a)Q_{1,j+1}^*(a)P_{1,j+1}^{*^{-1}}(a)).
 \label{neu22i}
\end{align}
Equalities (\ref{f022j}), (\ref{f0021j}), (\ref{f021j})
 and (\ref{f012j}) follow
from (\ref{neu12p}), (\ref{neu22p}) and (\ref{neu12i}). Employ the
fact that $P_{2,j}$,
$\Gamma_{1,j}$ are monic polynomials. 
\end{proof}
Note that from (\ref{neu22i}) and (\ref{neu21p}), one obtains the
following identities:
 \begin{align}
 (b-a)Q_{1,j+1}(a)-Q_{2,j}(a)+P_{1,j+1}(a)\Gamma_{1,j}^{-1}(a)\Theta_{1,j}(a)=&0,
 \label{nuw1}\\
 \Gamma_{1,j}(a)-
 \Gamma_{2,j}(a)-(b-a)\Theta_{2,j}(a)Q_{2,j-1}^{-1}(a)
 P_{2,j-1}(a)=&0.
 \label{nuw2}
  \end{align}
The following remark is verified by using identities from
Proposition \ref{propa1new} and (\ref{nuw1}), (\ref{nuw2}).
\begin{rem} \label{remaa1new}
 The following identities hold:
  \begin{align*}
 Q_{2,j}(a)=&\Gamma_{1,j}(a)\mb_j^{-1},
 \\
 \Gamma_{1,j}(a)=&-Q_{2,j-1}(a)\lb_{j-1}^{-1},\\
  P_{2,j}(a)=&\,Q_{2,j}(a)(\mb_0+\mb_1+\ldots+\mb_{j-1}),\\
  \Theta_{1,j}(a)=&\,\Gamma_{1,j}(a)(s_0+\lb_0+\ldots+\lb_{j-1}),
  \end{align*}
  and
  \begin{align*}
 Q_{2,j}(a)=&-Q_{2,j-1}(a)\lb_{j-1}^{-1}\mb_j^{-1},\\
 \Theta_{1,j}(a)=&Q_{2,j-1}\lb_{j-1}^{-1}
 (s_0+\lb_0+\ldots+\lb_{j-1}). 
  \end{align*}
\end{rem}
\begin{rem}
\label{rem4.4Anew} Let $P_{1,j}$, $Q_{2,j}$, $\Gamma_{1,j}$ and
$\Theta_{2,j}$ be  as in Definitions \ref{de002} and \ref{def001}.
Let
  $\widehat H_{1,j}$, $\widehat K_{2,j}$ be defined by 
  (\ref{lkk}),  (\ref{lkk2})
and $\mb_j,\, \lb_j$ be as in  (\ref{mmm01})-(\ref{lllj}).
  Therefore, the following identities hold:
  \begin{equation}
 \widehat H_{2,0}=(\mb_0\lb_0)^{*^{-1}}\lb_0(\mb_0\lb_0)^{-1}, \quad
  \widehat K_{1,0}
  =\mb_0^{-1}\label{ah00}
\end{equation}
and
\begin{align}
    \widehat H_{2,j}
    =&\overrightarrow{\prod_{k=0}^{j}}(\mb_k\lb_k)^{-1^*}\lb_j
\overleftarrow{\prod_{k=0}^{j-1}}(\mb_k\lb_k)^{-1},
    \label{ah1j}\\
  \widehat{K}_{1,j}
  =&\overrightarrow{\prod_{k=0}^{j}}(\mb_k\lb_k)^{-1^*}\mb_j^{-1}
\overleftarrow{\prod_{k=0}^{j}}(\mb_k\lb_k)^{-1}. \label{bh1j}
 \end{align}
Moreover,
\begin{align*}
 P_{1,j}(a)=&(-1)^j \widehat K_{2,j-1}\widehat H_{1,j-1}^{-1}\cdots \widehat K_{2,0}
 \widehat H_{1,0}^{-1}, 
 \\
\Gamma_{1,j}(a)=
 &(-1)^j \widehat H_{2,j-1}\widehat K_{1,j-1}^{-1}\cdots \widehat H_{1,1}
 \widehat H_{2,0}\widehat K_{1,0}^{-1}, 
 \\
 Q_{2,j}(a)=
 &(-1)^j \widehat K_{1,j}\widehat H_{2,j-1}^{-1}\widehat K_{1,j-1}\cdots
 \widehat H_{2,0}^{-1}\widehat K_{1,0}, 
 \\
  \Theta_{2,j}(a)=&(-1)^{j+1} \widehat H_{1,j}\widehat K_{2,j-1}^{-1}\widehat H_{1,j-1}\cdots
 \widehat K_{2,0}^{-1}\widehat H_{1,0}. 
  \end{align*}
\end{rem}

Under the same conditions as the previous remark,
 the following result is immediately implied.
\begin{rem}
\label{rem4.4BBnew}
  The following identities hold:
  \begin{align*}
 \mb_0=\widehat K_{1,0}^{-1}, \quad
  \lb_0=(\widehat H_{2,0}^{-1}\widehat K_{1,0})^{*}\widehat H_{2,0}
(\widehat H_{2,0}^{-1}\widehat K_{1,0}^{-1})
\end{align*}
and
\begin{align*}
    \mb_j=&\overrightarrow{\prod_{k=0}^{j-1}}
    (\widehat H_{2,k} \widehat K_{1,k}^{-1})^{*}
     \widehat K_{1,j}^{-1}
\overleftarrow{\prod_{k=0}^{j-1}}(\widehat H_{2,k}\widehat
K_{1,k}^{-1}),
    \\
  \lb_j=&\overrightarrow{\prod_{k=0}^{j}}
    (\widehat H_{2,k}^{-1} \widehat K_{1,k})^{*}
     \widehat H_{2,j}
\overleftarrow{\prod_{k=0}^{j}}(\widehat H_{2,k}^{-1}\widehat
K_{1,k}).
 \end{align*}
\end{rem}
\begin{prop} \label{propHnew}
 Let $a$ and $b$ be real numbers such that $a<b$.
 Furthermore, let $s_0$, $\mb_0(b)$, $(\mb_j(a,b))_{j=0}^{n-1}$ and  $(\lb_j(a,b))_{j=0}^{n}$
(resp. $s_0$, $(\mb_j(a,b))_{j=0}^{n}$ and $(\lb_j(a,b))_{j=0}^{n}$)
be sequences of positive Hermitian complex
 $q\times q$ matrices. Let  $(s_j)_{j=0}^{2n}$ (resp.
 $(s_j)_{j=0}^{2n+1}$)
 be a sequence recursively defined by
 \begin{align*}
 bs_{2j}-s_{2j+1}:=\left\{\begin{array}{ll}
                  \mb_0^{-1}, & \mbox{if}\ \ j=0,\\
                  \widetilde Y_{1,j}^*K_{1,j-1}^{-1}\widetilde Y_{1,j}+
                   \overrightarrow{\prod_{k=0}^{j}}(\mb_k\lb_k)^{-1^*}\mb_j^{-1}
\overleftarrow{\prod_{k=0}^{j}}(\mb_k\lb_k)^{-1}
                  & \mbox{if}\ \ j\geq 1
                \end{array}
 \right. 
 \end{align*}
 and
  \begin{align*}
\widehat s_{2j}:=\left\{\begin{array}{ll}
                  (\mb_0\lb_0)^{-1^*}\lb_0(\mb_0\lb_0), & \mbox{if}\ \ j=0, \\
                  Y_{2,j}^*H_{2,j-1}^{-1} Y_{2,j}+
                   \overrightarrow{\prod_{k=0}^{j}}(\mb_k\lb_k)^{-1^*}\lb_j
\overleftarrow{\prod_{k=0}^{j-1}}(\mb_k\lb_k)^{-1}
                  & \mbox{if}\ \ j\geq 1
                \end{array}
 \right. 
 \end{align*}
 for $j=0,\ldots, n$ (resp. $j=0,\ldots, n-1$).
 Thus, $K_{1,j}=\{bs_{k+j}-s_{k+j+1}\}_{k,j=0}^n$ 
  (resp. $H_{2,j}=\{\widehat s_{k+j}\}_{k,j=0}^{n-1}$) is
  a positive Hermitian matrix.
 \end{prop}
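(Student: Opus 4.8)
The plan is to run a single induction on $j$ that does three things at once: it produces the moment sequence $(s_j)$ step by step, it shows $K_{1,j}$ is positive definite, and it shows $H_{2,j}$ is positive definite. These last two cannot be separated, since evaluating the recursion for $bs_{2j}-s_{2j+1}$ inverts $K_{1,j-1}$ while evaluating the recursion for $\widehat s_{2j}$ inverts $H_{2,j-1}$, so both Hankel chains have to be carried along simultaneously.

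First I would set up the elementary bookkeeping. Put $c_k:=bs_k-s_{k+1}$. Then (\ref{63}), (\ref{2nmas1}), (\ref{27}), (\ref{83}) give $K_{1,j}=\{c_{k+l}\}_{k,l=0}^{j}$, $\widetilde Y_{1,j}=(c_j,\dots,c_{2j-1})^{\top}$, $\widehat s_k=c_{k+1}-ac_k$, $H_{2,j}=\{\widehat s_{k+l}\}_{k,l=0}^{j}$ and $Y_{2,j}=(\widehat s_j,\dots,\widehat s_{2j-1})^{\top}$, hence
\[
K_{1,j}=\begin{pmatrix}K_{1,j-1}&\widetilde Y_{1,j}\\ \widetilde Y_{1,j}^{*}&c_{2j}\end{pmatrix},\qquad
H_{2,j}=\begin{pmatrix}H_{2,j-1}&Y_{2,j}\\ Y_{2,j}^{*}&\widehat s_{2j}\end{pmatrix},
\]
so the $\widehat K_{1,j}$ and $\widehat H_{2,j}$ of (\ref{lkk3}) and (\ref{lkk2}) are exactly the Schur complements of $K_{1,j-1}$ in $K_{1,j}$ and of $H_{2,j-1}$ in $H_{2,j}$. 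Also, given $s_0$: knowing $c_{2j}$ recovers $s_{2j+1}=bs_{2j}-c_{2j}$, and then knowing $\widehat s_{2j}$ recovers $s_{2j+2}=-abs_{2j}+(a+b)s_{2j+1}-\widehat s_{2j}$; so, interleaved in this order over successive $j$, the two recursions determine the whole sequence. The one thing to watch is that the odd-indexed $\widehat s_{2j-1}$ occurring in $Y_{2,j}$ is available only after $s_{2j+1}$ has been produced, i.e. only after the first recursion has been run at level $j$; this fixes the order \emph{first recursion, then second recursion} within each level.

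The inductive step is as follows. Suppose $s_0,\dots,s_{2j}$ have been produced and $K_{1,j-1},H_{2,j-1}$ are positive definite (the base case $j=0$ being immediate, since $K_{1,0}=\widehat K_{1,0}=\mb_0^{-1}$ and $H_{2,0}=\widehat H_{2,0}=\widehat s_0$ are visibly positive definite from their $j=0$ recursive definitions; cf. also (\ref{ah00})). As $K_{1,j-1}$ is invertible, the first recursion defines $c_{2j}$, hence $s_{2j+1}$, and by (\ref{lkk3})
\[
\widehat K_{1,j}=c_{2j}-\widetilde Y_{1,j}^{*}K_{1,j-1}^{-1}\widetilde Y_{1,j}=W^{*}\mb_j^{-1}W,\qquad W:=\overleftarrow{\prod_{k=0}^{j}}(\mb_k\lb_k)^{-1},
\]
using $\overrightarrow{\prod_{k=0}^{j}}(\mb_k\lb_k)^{-1^*}=W^{*}$; this is a congruence of the positive definite $\mb_j^{-1}$ by the invertible $W$, so $\widehat K_{1,j}$ is positive definite, and the Schur-complement characterization of positive definiteness of Hermitian block matrices (with $K_{1,j-1}$ positive definite) yields $K_{1,j}$ positive definite. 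Now $s_{2j+1}$ is in hand, so $\widehat s_{2j-1}$ and $Y_{2,j}$ are determined; as $H_{2,j-1}$ is invertible, the second recursion defines $\widehat s_{2j}$, hence $s_{2j+2}$, and by (\ref{lkk2}), $\widehat H_{2,j}=\widehat s_{2j}-Y_{2,j}^{*}H_{2,j-1}^{-1}Y_{2,j}=\overrightarrow{\prod_{k=0}^{j}}(\mb_k\lb_k)^{-1^*}\lb_j\overleftarrow{\prod_{k=0}^{j-1}}(\mb_k\lb_k)^{-1}$; since $\mb_j,\lb_j$ are Hermitian one has $(\mb_j\lb_j)^{-1^*}=\mb_j^{-1}\lb_j^{-1}$, so after cancelling $\lb_j^{-1}\lb_j$ this equals $V^{*}\mb_j^{-1}V$ with $V:=\overleftarrow{\prod_{k=0}^{j-1}}(\mb_k\lb_k)^{-1}$ invertible --- again a congruence of a positive definite matrix --- so $\widehat H_{2,j}$, and hence $H_{2,j}$, is positive definite. (Hermitian symmetry of $s_i,c_i,\widehat s_i$ and of $K_{1,j},H_{2,j}$ is preserved throughout.) Iterating the step up to $j=n$ gives $K_{1,n}$ positive definite, and up to $j=n-1$ gives $H_{2,n-1}$ positive definite, which is the assertion.

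I expect the point requiring the most care to be not any individual computation --- the displayed expressions for $\widehat K_{1,j}$ and $\widehat H_{2,j}$ hold by the very definition of the recursion, so nothing in the spirit of Remark \ref{rem4.4Anew} needs to be reproved --- but rather the verification that the recursion is \emph{well posed}: one must confirm that at each level every ingredient entering the two formulas (the inverses of $K_{1,j-1}$ and $H_{2,j-1}$, the odd-indexed $\widehat s_{2j-1}$, the block $Y_{2,j}$) is already available, and in the correct order, which is precisely what the interleaving and the running positivity hypothesis provide.
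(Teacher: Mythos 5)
Your proposal is correct and follows essentially the same route as the paper: identify the Schur complements $\widehat K_{1,j}$ and $\widehat H_{2,j}$ with the product expressions built from the recursion, observe that these are congruences of the positive definite $\mb_j^{-1}$ (resp.\ of a positive definite matrix built from $\lb_j$) by invertible factors, and then pass from positivity of the Schur complements to positivity of $K_{1,j}$ and $H_{2,j}$ via the block decomposition and Lemma \ref{lemH}. You merely make explicit the interleaved induction and the well-posedness of the recursion, which the paper leaves implicit by citing (\ref{ah00})--(\ref{bh1j}).
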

\begin{proof} By (\ref{ah00}), (\ref{ah1j}), (\ref{bh1j})
and the fact that $\mb_j$ and $\lb_j$ are positive definite, then
 \begin{align}
 &\widehat K_{1,j}>0, \quad j\in \N_0, \label{mmlhh1}\\
 &\widehat H_{2,j}>0, \quad j\in \N_0. \label{mmlhh2}
\end{align}
 Let
\begin{equation} K_{1,n}=\left(\begin{array}{cc}
              K_{1,n-1} & \widetilde Y_{1,n} \\
              \widetilde Y_{1,n}^* & bs_{2n}-s_{2n+1}
            \end{array}\right), \ \
            H_{2,n-1}=\left(\begin{array}{cc}
              H_{2,n-2} &  Y_{2,n-1} \\
               Y_{2,n-1}^* &\widehat s_{2n-2}
            \end{array}\right). \label{Hschur}
  \end{equation}
 In view of (\ref{mmlhh1}) and the first equality of (\ref{Hschur})
  (resp. (\ref{mmlhh2}) and the second equality of (\ref{Hschur})), as well as Lemma
 \ref{lemH}, we obtain that $K_{1,j}$ (resp. $H_{2,j}$) is positive
 definite.
\end{proof}

\begin{prop} \label{propHbbnew} a) Let the polynomials
 $(Q_{2,j})_{j=0}^{n-1}$ and $(\Gamma_{1,j})_{j=1}^{n}$
 be as in
 (\ref{93aa}) and (\ref{38-1}), respectively.
Thus, the following equalities hold:
 \begin{align}
  \mb_0=&Q_{2,0}^{-1}(a), \quad \mb_j=Q_{2,j}^{-1}(a)\Gamma_{1,j}(a),
  \quad 0\leq j\leq n-1, \label{lQGj}\\
  \lb_j=& -\Gamma_{1,j+1}(a)^{-1}Q_{2,j}(a), \quad 0\leq j\leq n-1.
  \label{mGQj}
 \end{align}
 b) Let the polynomials $(P_{1,j})_{j=1}^{n+1}$, and $(\Theta_{2,j})_{j=0}^{n}$
 be as in (\ref{92}) and (\ref{39-2}), respectively.
 Thus, the following equalities hold:
\begin{align}
  \Mb_0(a)=&-\Theta_{2,0}^{-1}(a), \quad \Mb_j(a)=-\Theta_{2,j}^{-1}(a)P_{1,j}(a),
  \quad 1\leq j\leq n,\label{Mj00}\\
  \Lb_j(a)=& P_{1,j+1}^{-1}(a)\Theta_{2,j}(a), \quad 0\leq j\leq n.
  \label{Lj00}
 \end{align}
%
%
 \end{prop}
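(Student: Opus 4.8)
The plan is to read both parts off the Gram-type representations of the Dyukarev--Stieltjes parameters, combined with the identities (\ref{pqHHa}) and (\ref{pqHHa11}) that link the Schur complements $\widehat H_{\cdot,j}$, $\widehat K_{\cdot,j}$ to the orthogonal polynomials evaluated at $z=a$; the only extra input needed is that the parameters involved are Hermitian.

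For part a) I would start from (\ref{mm01-2np1}), namely $\mb_j=\Gamma_{1,j}^*(a)\widehat K_{1,j}^{-1}\Gamma_{1,j}(a)$ and $\lb_j=Q_{2,j}^*(a)\widehat H_{2,j}^{-1}Q_{2,j}(a)$, and substitute $\widehat K_{1,j}=\Gamma_{1,j}(a)Q_{2,j}^*(a)$ and $\widehat H_{2,j}=-Q_{2,j}(a)\Gamma_{1,j+1}^*(a)$ from (\ref{pqHHa11}) and (\ref{pqHHa}). The outer factors cancel against the inverse, leaving $\mb_j=\Gamma_{1,j}^*(a)Q_{2,j}^{*^{-1}}(a)$ and $\lb_j=-Q_{2,j}^*(a)\Gamma_{1,j+1}^{*^{-1}}(a)$; since $\mb_j$ and $\lb_j$ are Hermitian (shown just after Definition~\ref{st2ndef-2np1}), taking adjoints produces $\mb_j=Q_{2,j}^{-1}(a)\Gamma_{1,j}(a)$ and $\lb_j=-\Gamma_{1,j+1}^{-1}(a)Q_{2,j}(a)$, i.e.\ (\ref{lQGj}) and (\ref{mGQj}), with the case $j=0$ of (\ref{lQGj}) following from $\Gamma_{1,0}\equiv I_q$. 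Equivalently one may simply invert the two identities $Q_{2,j}(a)=\Gamma_{1,j}(a)\mb_j^{-1}$ and $\Gamma_{1,j}(a)=-Q_{2,j-1}(a)\lb_{j-1}^{-1}$ already recorded in Remark~\ref{remaa1new}; I would use whichever presentation is shorter.

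Part b) is the verbatim ``$H_{1}$/$K_{2}$'' analogue. First I would establish the Gram representations $\Mb_j(a)=P_{1,j}^*(a)\widehat H_{1,j}^{-1}P_{1,j}(a)$ and $\Lb_j(a)=\Theta_{2,j}^*(a)\widehat K_{2,j}^{-1}\Theta_{2,j}(a)$ in exactly the way (\ref{mm01-2np1}) is obtained: expand $H_{1,j}^{-1}$ (resp.\ $K_{2,j}^{-1}$) along its trailing block $s_{2j}$ (resp.\ $-as_{2j}+s_{2j+1}$) by the Schur-complement formula analogous to (\ref{32aa})--(\ref{32aa-2np1}), split $R_j(a)v_j$ (resp.\ $R_j(a)\widetilde u_{2,j}$) into its first $jq$ rows $R_{j-1}(a)v_{j-1}$ (resp.\ $R_{j-1}(a)\widetilde u_{2,j-1}$) and its last $q$ rows, and identify $(-Y_{1,j}^*H_{1,j-1}^{-1},I_q)R_j(a)v_j=P_{1,j}(a)$ and $(-\widetilde Y_{2,j}^*K_{2,j-1}^{-1},I_q)R_j(a)\widetilde u_{2,j}=\Theta_{2,j}(a)$ via (\ref{92}) and (\ref{39-2}); the telescoping differences in (\ref{MMja}) and (\ref{LLja}) then collapse to the claimed single terms. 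Substituting $\widehat H_{1,j}=-P_{1,j}(a)\Theta_{2,j}^*(a)$ and $\widehat K_{2,j}=\Theta_{2,j}(a)P_{1,j+1}^*(a)$ from (\ref{pqHHa}), (\ref{pqHHa11}) and cancelling gives $\Mb_j(a)=-P_{1,j}^*(a)\Theta_{2,j}^{*^{-1}}(a)$ and $\Lb_j(a)=\Theta_{2,j}^*(a)P_{1,j+1}^{*^{-1}}(a)$; adjoining (again both parameters are Hermitian) yields (\ref{Mj00}) and (\ref{Lj00}). The base cases are a direct check: $P_{1,0}\equiv I_q$, $\Theta_{2,0}\equiv-s_0$ and $\Mb_0(a)=s_0^{-1}$ give $-\Theta_{2,0}^{-1}(a)P_{1,0}(a)=s_0^{-1}=\Mb_0(a)$, and unwinding $P_{1,1}^{-1}(a)\Theta_{2,0}(a)$ through (\ref{92}) recovers the defining expression $\Lb_0(a)=\widetilde u_{2,0}^*K_{2,0}^{-1}\widetilde u_{2,0}$.

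The only step carrying actual content is the first half of part b), namely producing the Gram representations of $\Mb_j$, $\Lb_j$ and noting their Hermiticity, because the excerpt recalls only the definitions (\ref{MM0a})--(\ref{LLja}) and not the Blaschke--Potapov factorization of the first auxiliary matrix $\widetilde U_1^{(m)}$ from \cite{abdonDS1}, \cite{abODD}. That computation is, however, the literal counterpart of the one already used for $\mb_j$, $\lb_j$, so it requires no new idea; everything afterwards is adjoint bookkeeping, which is the only place an error could realistically creep in.
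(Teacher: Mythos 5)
Your proof is correct, but it follows a different route from the paper's. The paper deduces part a) from the explicit alternating--product formulas (\ref{f022j}) and (\ref{f0021j}) of Proposition \ref{propa1new} (so that $Q_{2,j}^{-1}(a)\Gamma_{1,j}(a)$ and $-\Gamma_{1,j+1}^{-1}(a)Q_{2,j}(a)$ collapse to $\mb_j$ and $\lb_j$ by cancellation of the products), and part b) from the analogous product formulas for $P_{1,j}(a)$ and $\Theta_{2,j}(a)$ quoted from \cite[Proposition 4.1]{abdonDS1}. You instead combine the Gram-type representations $\mb_j=\Gamma_{1,j}^*(a)\widehat K_{1,j}^{-1}\Gamma_{1,j}(a)$, $\lb_j=Q_{2,j}^*(a)\widehat H_{2,j}^{-1}Q_{2,j}(a)$ of (\ref{mm01-2np1}) with the Schur-complement factorizations (\ref{pqHHa}), (\ref{pqHHa11}), cancel the outer factors, and take adjoints using Hermiticity; for part b) you first have to manufacture the parallel representations $\Mb_j(a)=P_{1,j}^*(a)\widehat H_{1,j}^{-1}P_{1,j}(a)$ and $\Lb_j(a)=\Theta_{2,j}^*(a)\widehat K_{2,j}^{-1}\Theta_{2,j}(a)$, which is indeed the verbatim Schur-complement computation behind (\ref{mm01-2np1}) applied to $H_{1,j}$ and $K_{2,j}$, and all the identifications you list ($R_j(a)v_j$ splitting, $(-Y_{1,j}^*H_{1,j-1}^{-1},I_q)R_j(a)v_j=P_{1,j}(a)$, etc.) check out, as do your base cases. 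What your approach buys is locality and self-containedness: you avoid the power-series expansions of the resolvent matrix that underlie Proposition \ref{propa1new} and you do not need to import the product formulas from \cite{abdonDS1}; the price is the extra (routine) derivation of the Gram representations for $\Mb_j$, $\Lb_j$ and the explicit appeal to the identities of \cite{abKN} recorded in (\ref{pqHHa})--(\ref{pqHHa11}). Both arguments are sound.
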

 \begin{proof} Equalities (\ref{lQGj}) and (\ref{mGQj}) readily
 follow from  (\ref{f022j}) and  (\ref{f0021j}), respectively.
  Equalities (\ref{Mj00}) and (\ref{Lj00}) readily
 follow from
\begin{align*}
 P_{1,j}(a)=&(-1)^{j}
 \Mb_{0}^{-1}\Lb_{0}^{-1}\Mb_{1}^{-1}L_{1}^{-1}\cdots \Mb_{j-1}^{-1}\Lb_{j-1}^{-1},
 \ \ 1\leq j\leq n+1,
 \end{align*}
 and
 \begin{align*}
 \Theta_{2,j}(a)=&(-1)^{j+1}
\Mb_0^{-1}\Lb_0^{-1}\Mb_1^{-1}L_1^{-1} \cdots
\Lb_{j-1}^{-1}\Mb_j^{-1}, \ \
 1\leq j\leq n
\end{align*}
which were proved in \cite[Proposition 4.1]{abdonDS1}.
 \end{proof}
 In \cite{dyu0} Dyukarev created the Stieltjes parameters
  for the Stieltjes matrix moment problem
 which in our notations are given by $\Mb_0(0)$, $\Lb_0(0)$, $\Mb_j(0)$
 and $\Lb_j(0)$. See also \cite{ablaa}.

  The following remark gives the interrelation between the mentioned
  Stieltjes parameters and the DSM parameters studied in the present
  work.
 \begin{rem} \label{remlimitab} Let 
  $\Mb_j$ and $\Lb_j$ be the DSM parameters
 as in (\ref{MM0a})-(\ref{MMja}). Furthermore, let the DSM parameters be
 as in (\ref{mmm01}), (\ref{mmmj}) and (\ref{lllj}). Thus, the
 following relations are valid:
 \begin{equation}
  \Mb_j(0)=\lim_{b\to+\infty}b\, \mb_j(0,b),\qquad
    \Lb_j(0)=\lim_{b\to+\infty}b^{-1}\lb_j(0,b).
 \end{equation}
 \end{rem}
 This Remark can be verified by direct calculations.
 \section{Scalar case} \label{secscalar}
  This section  is related to the scalar version of
the DSM parameters $\mb_j$ and $\lb_j$.

 For $q=1$,  let $n\in \N$, and let
$(s_j)_{j=0}^{2n}$ (resp.
 $(s_j)_{j=0}^{2n+1}$) be a Hausdorff positive definite sequence of numbers.
  Let \begin{align}
D^{(3)}_{x,j}:=\left(
             \begin{array}{cccc}
                            s^{(3)}_0 & s^{(3)}_1 & \ldots & s^{(3)}_j \\
                            s^{(3)}_1 & s^{(3)}_2 & \ldots & s^{(3)}_{j+1} \\
                          \ldots & \ldots & \ldots & \ldots \\
                            s^{(3)}_{j-1} & s^{(3)}_j & \ldots & s^{(3)}_{2j-1} \\
                            1 & x & \ldots & x^j \\
                          \end{array}
           \right),
    \label{Dxj3}
   \end{align}
    with 
$s^{(3)}_j:=bs_j-s_{j+1}$. 
Furthermore, let  \begin{align} E^{(2)}_{x,j}:=\left(
             \begin{array}{cccc}
                            s^{(2)}_0 & s^{(2)}_1 & \ldots & s^{(2)}_j \\
                            s^{(2)}_1 & s^{(2)}_2 & \ldots & s^{(2)}_{j+1} \\
                          \ldots & \ldots & \ldots & \ldots \\
                            s^{(2)}_{j-1} & s^{(2)}_j & \ldots & s^{(2)}_{2j-1} \\
                            e_{2,0}(x) &  e_{2,1}(x) & \ldots & e_{2,j}(x)
                          \end{array}
           \right),
    \label{Exj2}
   \end{align}
 where
 \begin{align*} (e_{2,0}(x),\,  e_{2,1}(x),\ldots, e_{2,j}(x))
 :=
                                   -(u_{2,j}^*+x v_j^* s_0)R_j^*(x).&\,\,
 \end{align*}
 The matrices $D^{(2)}_{x,j}$ and  $E^{(2)}_{x,j}$
 were introduced in \cite[Subsection 2.4]{abKN}.
 The following remark is related to the scalar version of
the DSM parameters $\widetilde m_j$ and $\widetilde l_j$.
\begin{rem} \label{skarem02} Let $\widetilde m_j$
and $\widetilde l_j$ be the scalar
 version of the DSM parameters $\lb_j$ and $\mb_j$ defined by
 (\ref{mmm01})-(\ref{lllj}).
 Let $D^{(3)}_{x,j}$ and $E_{x,j}^{(2)}$
 be as in (\ref{Dxj3}) and (\ref{Exj2}), respectively. Thus, the following
  identities hold:
 \begin{align}
\widetilde l_0(a,b)=&\frac{(E_{a,0}^{(2)})^2}{H_{2,0}}, \quad
\widetilde m_j(b)=\frac{1}{bs_0-s_1},
\label{smm0}\\
  \widetilde l_j(a,b)=&\frac{(\det E^{(2)}_{a,j})^2}{\det H_{2,j}\, \det H_{2,j-1}}, \quad
 \widetilde  m_j(a,b)=
 \frac{(\det D_{a,j}^{(3)})^2}{\det K_{1,j}\, \det K_{1,j-1}},\quad j\geq 1.
  \label{smmllj}
 \end{align}
\end{rem}
\begin{proof} The first and second equality of (\ref{smm0}) are
an immediate consequence of (\ref{mm01-2np1}) 
 for $q=1$.
 The first equality of (\ref{smmllj}) is proved by employing
 (\ref{mm01-2np1}) for $q=1$, \cite[Equation (2.40)]{abKN} 
 and the relation
   \begin{align*}
\widehat H_{2,j}=\frac{\det H_{2,j}}{\det H_{2,j-1}}, \quad j\geq 1.
   \end{align*}
   The second equality of (\ref{smmllj}) can be proved in a similar
   way. Use (\ref{mm01-2np1}) for $q=1$, \cite[Equation (2.38)]{abKN} %
    and equality $\widehat
   K_{1,j}=\frac{\det K_{1,j}}{\det K_{1,j-1}}$.
 \end{proof}
 Note that scalar Stieltjes parameters of Remark \ref{skarem02}
  coincide with (\ref{scalm}) as $b$ approaches infinity and $a=0$.


\appendix
\renewcommand*{\thesection}{\Alph{section}}

\section{Appendix}
In this appendix we reproduce some results from \cite{abdonDS1},
which are used in the present work.
\begin{defn}\cite[Formula (6.2)]{abdon2} Let $(s_k)_{k=0}^{2j}$ be an odd Hausdorff positive on $[a,b]$
sequence. The $2q\times 2q$ matrix polynomial
 $$ 
\widetilde U_2^{(2j)}(z,a,b):=\left(\begin{array}{cc}
        \widetilde\alpha_2^{(2j)}(z,a,b) &
         \widetilde\beta_2^{(2j)}(z,a,b)\\
        \widetilde\gamma_2^{(2j)} (z,a,b)&
        \widetilde\delta_2^{(2j)}(z,a,b)
        \end{array}\right),\, z\in {\mathbb C}, \quad 0\leq j\leq n-1,
$$ 
            with
\begin{align*}
 \widetilde\alpha_2^{(2j)}(z,a,b):=& I_q-(z-a)u^{*}_{2,j}R_{j}^*(\bar z) H^{-1}_{2,j}R_{j}(a)v_{j},
\\
\widetilde\beta_2^{(2j)}(z,a,b):=& (z-a)u^{*}_{2,j}R_{j}^*(\bar z)
H^{-1}_{2,j}R_{j}(a)u_{2,j},\\
\widetilde\gamma_2^{(2j)}(z,a,b):=&-(z-a)v^{*}_{j}R_{j}^*(\bar z)
H^{-1}_{2,j}R_{j}(a)v_{j}
\end{align*} 
and
\begin{align*}
\widetilde\delta_2^{(2j)}(z,a,b):=&I_q+(z-a)v^{*}_{j}R_{j}^*(\bar
z)H^{-1}_{2,j}R_{j}(a)u_{2,j}
\end{align*}
 is called the second auxiliary matrix of the THMM problem in the case of an odd number
 of moments.
\end{defn}


\begin{rem} \cite[Equalities (1.30) and (1.31)]{abdonDS1} The following identities are
valid:
\begin{align}
 \Gamma_{2,j}^*(\bar z,a)\Theta_{2,j}^{*^{-1}}(a,a)=&
 -v^{*}_{j}R_{j}^*(\bar{z}) H^{-1}_{1,j}R_{j}(a)v_{j},\label{m100}
 \\
Q_{1,j+1}^*(\bar z)P_{1,j+1}^{*^{-1}}(a)=&-\widetilde
u^{*}_{2,j}R_{j}^*(\bar{z})K^{-1}_{2,j}R_{j}(a)\widetilde
u_{2,j}.\label{uRj02}
\end{align}
\end{rem}

 \vskip3mm

Finally,  let us recall the following well-known result below.
\begin{lem}\cite[Proposition 8.2.4]{ber}\label{lemH}
Let $A:=\left(\begin{array}{cc}
                A_{11} & A_{12} \\
                A_{12}^* & A_{22}
              \end{array}
\right)$ be a Hermitian $(n+m)\times (n+m)$ matrix. Therefore, the
following statements are equivalent:
\begin{itemize}
 \item[i)] $A>0$.
 \item[ii)] $A_{11}>0$ and  $A_{12}^*A_{11}^{-1}A_{12}<A_{22}$.
\item[iii)] $A_{22}>0$ and  $A_{12}A_{22}^{-1}A_{12}^*<A_{11}$.
\end{itemize}
\end{lem}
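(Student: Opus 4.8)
The plan is to obtain all three equivalences from a single block congruence (an $LDL^{*}$-type factorization) of the Hermitian matrix $A$, combined with two elementary facts: positive definiteness passes to principal submatrices, and it is invariant under congruence by an invertible matrix.

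First I would record the two preliminary observations. If $A>0$, then evaluating the quadratic form on vectors $\left(\begin{smallmatrix}x\\0\end{smallmatrix}\right)$ with $x\in\C^{n}\setminus\{0\}$ gives $x^{*}A_{11}x>0$, so $A_{11}>0$; symmetrically, testing on $\left(\begin{smallmatrix}0\\y\end{smallmatrix}\right)$ gives $A_{22}>0$. Moreover, for every invertible $(n+m)\times(n+m)$ matrix $S$ one has $A>0$ if and only if $S^{*}AS>0$, since $x\mapsto Sx$ is a bijection of $\C^{n+m}\setminus\{0\}$ onto itself and $(Sx)^{*}A(Sx)=x^{*}(S^{*}AS)x$.

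Next, for the equivalence of (i) and (ii) I would assume $A_{11}>0$; this is part of (ii), and by the submatrix observation it is also a consequence of (i). With $A_{11}$ invertible, set $S:=\left(\begin{smallmatrix}I_{n} & -A_{11}^{-1}A_{12}\\ 0 & I_{m}\end{smallmatrix}\right)$, which is invertible; since $A_{11}$ is Hermitian, a direct multiplication gives
\begin{equation*}
S^{*}AS=\begin{pmatrix} A_{11} & 0\\ 0 & A_{22}-A_{12}^{*}A_{11}^{-1}A_{12}\end{pmatrix}.
\end{equation*}
By the congruence observation, $A>0$ if and only if this block-diagonal matrix is positive definite, i.e. if and only if $A_{11}>0$ and $A_{22}-A_{12}^{*}A_{11}^{-1}A_{12}>0$, which is precisely condition (ii). The equivalence of (i) and (iii) is obtained by the same computation with the two blocks interchanged: assuming $A_{22}>0$ and conjugating by $\left(\begin{smallmatrix}I_{n} & 0\\ -A_{22}^{-1}A_{12}^{*} & I_{m}\end{smallmatrix}\right)$ yields the block-diagonal matrix $\left(\begin{smallmatrix}A_{11}-A_{12}A_{22}^{-1}A_{12}^{*} & 0\\ 0 & A_{22}\end{smallmatrix}\right)$, and positivity of the latter is exactly (iii).

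There is no genuine obstacle in this argument; the only point that needs a little care is the logical bookkeeping in the direction (i)$\Rightarrow$(ii), where one must first invoke the submatrix observation to know $A_{11}>0$ before the congruence identity can even be written down, whereas in (ii)$\Rightarrow$(i) the positivity of $A_{11}$ is granted at the outset (and likewise for the role of $A_{22}$ in (iii)). Since the statement is quoted verbatim from \cite[Proposition 8.2.4]{ber}, in the paper I would simply cite it; the sketch above is the standard Schur-complement reconstruction that one would supply if a self-contained proof were wanted.
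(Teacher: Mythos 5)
Your argument is correct: the block congruence by $\left(\begin{smallmatrix}I & -A_{11}^{-1}A_{12}\\ 0 & I\end{smallmatrix}\right)$ (and its counterpart for $A_{22}$) diagonalizes $A$ into $A_{11}$ and the Schur complement, and together with the two preliminary observations this gives all three equivalences; the logical bookkeeping you flag for (i)$\Rightarrow$(ii) is handled properly. The paper itself supplies no proof here, only the citation to \cite[Proposition 8.2.4]{ber}, and your sketch is exactly the standard Schur-complement argument behind that reference, so nothing further is needed.
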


%
%


\vskip2mm
 \noindent
\begin{minipage}{240pt}
 A.~E.~Choque-Rivero\\
 Instituto de F\'isica y Matem\'aticas\\
 Universidad Michoacana de San Nicol\'as de Hidalgo\\
 Ciudad Universitaria\\
 Morelia, Mich.\\
 C.P.~58048\\
 M\'exico\\
 \texttt{abdon@ifm.umich.mx}
\end{minipage}


\begin{thebibliography}{1}

\bibitem{ApNi}
Aptekarev A.,  Nikishin E., \textit{The scattering problem for a
discrete Sturm-Liouville operator}, Mat. Sb. 121(163),  327-358
(1983).



\bibitem{ber}  Bernstein D.S., \textit{ Matrix mathematics: theory,
 facts, and formulas with applications to
linear systems theory}, Princeton University Press, 2005.





\bibitem{abpol}
Choque Rivero A.E., \textit{The resolvent matrix for the matricial
Hausdorff moment problem expressed by orthogonal matrix
polynomials}, Complex Anal. Oper. Theory 7(4),  927-944 (2013).

\bibitem{abODD}  Choque Rivero A.E.,
 \textit{Decompositions of the  Blaschke-Potapov factors of the
 trunca\-ted Hausdorff  matrix moment problem. The case
 of odd number of moments}, Commun. Math. Anal. 17(2), 66-81 (2014).

\bibitem{abEVEN} Choque Rivero A.E., 
 \textit{Decompositions of the  Blaschke-Potapov factors of the
 trunca\-ted Hausdorff  matrix moment problem. The case
 of even number of moments}, Commun. Math. Anal. 17(2), 82-97 (2014).


\bibitem{abKN} Choque-Rivero A.E.,
 \textit{From the Potapov to the Krein-Nudel'man representation of the
resolvent matrix of the truncated Hausdorff matrix moment problem},
Bol. Soc. Mat. Mexicana  21(2), 233-259 (2015).

\bibitem{ablaa} Choque Rivero A.E., \textit{On Dyukarev's resolvent matrix for a
truncated Stieltjes matrix moment problem under the view of
orthogonal matrix polynomials}, Lin. Alg. and Appl. 474,  44-109
(2015).

\bibitem{ieeeA1} Choque-Rivero A.E.,
 \textit{On the solution set of the admissible bounded control problem
via orthogonal polynomials,} submitted to IEEE Trans. Autom. Control
(2015).


\bibitem{abdonDS1}  Choque Rivero A.E.,
 \textit{Dyukarev--Stieljtes parameters of the
 truncated Hausdorff matrix moment problem},
Boletin Soc. Mat. Mexicana, (2016). DOI: 10.1007/s40590-015-0083-5.

\bibitem{abdon2}
Choque Rivero A.E., Dyukarev Yu.M.,  Fritzsche B. and  Kirstein B.,
\textit{A truncated matricial moment problem on a finite interval.
The case of an odd number of prescribed moments.} System Theory,
Schur Algorithm and Multidimensional Analysis. \textit{Oper. Theory:
Adv. Appl.} 176,  99-174 (2007).


\bibitem{abdon1}
Choque Rivero A.E., Dyukarev Yu.M.,  Fritzsche B. and  Kirstein B.,
\textit{A truncated matricial moment problem on a finite interval}.
Interpolation, Schur Functions and Moment Problems. \textit{Oper.
Theory: Adv. Appl.} 165,  121-173 (2006).

\bibitem{optimo}
Choque Rivero A.E., Karlovich Yu., \textit{The time optimal control
as an interpolation problem,} Commun. Math. Anal, vol. 3,  1-11
(2011).

\bibitem{cks2010}
Choque Rivero A.E., Korobov V.I., Sklyar G.M., \textit{The
admissible control problem
   from the moment problem point of view,} Appl. Math. Lett. 23(1),
  58-63 (2010).

\bibitem{abmad}  Choque Rivero A.E.,  Maedler C.,
\textit{On Hankel positive
 definite perturbations of Hankel positive
definite sequences and interrelations to orthogonal matrix
polynomials}, Complex Anal. Oper. Theory 8(8), 121-173 (2014).

\bibitem{abZag} Choque Rivero A.E., Zagorodnyuk S., 
 \textit{An algorithm for the truncated matrix Hausdorff
 moment problem}, Commun. Math. Anal. 17(2), 108-130 (2014).

\bibitem{simon} Damanik D.,  Pushnitski A. and  Simon B.,
\textit{The analytic theory of matrix orthogonal polynomials}, Surv.
Approx. Theory 4,  1-85 (2008).


\bibitem{dette2} Dette H., Studden W. J., \textit{Matrix measures,
moment spaces and Favard's theorem on the interval $[0,1]$ and
$[0,\infty)$}, Lin. Alg. and Appl. 345,  169-193 (2002).


\bibitem{duranker} Duran A., \textit{Markov's theorem for orthogonal
matrix polynomials}, Can. J. Math. 48(6), 1180-1195 (1996).

\bibitem{duran}  Dur\'an A.J.,  \textit{Rodrigues's formulas for orthogonal matrix
polynomials satisfying higher-order differential equations}, Exp.
Math. 20(1), 15--24 (2011).

\bibitem{duran1}  Dur\'an A.J.,  Gr\"{u}nbaum F.A.,
\textit{Matrix differential equations and scalar polynomials
satisfying higher order recursions}, J. Math. Anal. Appl. 354, 1--11
(2009).

 \bibitem{duran2} Dur\'an  A.J.,  de la Iglesia M.D., \textit{Some
examples of orthogonal matrix polynomials satisfying odd order
differential equations}, J. Approx. Theory 150,  153--174 (2008).

\bibitem{duran3}
  Dur\'an A.J.,  L\'opez--Rodr\'iguez P.,
 \textit{Structural formulas for
orthogonal matrix polynomials satisfying second order differential
equations, II}, Constr. Approx. 26(1),  29--47 (2007).




\bibitem{dym1}
Dym H., \textit{On Hermitian block Hankel matrices, matrix
polynomials, the Hamburger moment problem, interpolation and maximum
entropy}, Integral Equations and Operator Theory 12,  757-812
(1989).



\bibitem{dyu001}
 Dyukarev Yu.M., \textit{The multiplicative structure of resolvent matrices
of interpolation problems in the Stieltjes class}, Vestnik Kharkov
Univ. Ser. Mat. Prikl. Mat. i Mekh. 458, 143-153 (1999).


\bibitem{dyu0}
 Dyukarev Yu.M.: \textit{Indeterminacy criteria for the Stieltjes
matrix moment problem}, Math. Notes  75(1-2), 66-82 (2004).


\bibitem{dyu2015}
Dyukarev Yu.M., \textit{Criterion for complete indeterminacy of
limiting interpolation problem of Stieltjes type in terms of
orthonormal matrix functions}, Russian Mathematics (Iz. VUZ) 59(4),
 61-88 (2015).
%


\bibitem{abdyu1}
 Dyukarev Yu.M., Choque Rivero A.E.,  \textit{A matrix version of one
Hamburger theorem}, Mat. Sb. 91(4), 522-529   (2012).

\bibitem{dyu11}
  Dyukarev Yu.M.,  Serikova I.Yu., \textit{Complete indeterminacy of the
Nevanlinna-Pick problem in the class S[a,b]}, Russian Math. (Iz.
VUZ) 51(11),  17-29 (2007).



\bibitem{fkm}
 Fritzsche B., Kirstein B., M\"{a}dler C., \textit{On Hankel nonegative
definite sequences, the canonical Hankel parametrization, and
orthogonal matrix polynomials}, Compl. Anal. Oper. Theory 5(2),
 447-511 (2011).


\bibitem{kre1}  Gantmacher F.R.,   Krein M.G., \textit{Oscillation matrices and
 small oscillations of mechanical systems} (Russian), Gostekhizdat,
Moscow-Leningrad, 1941.



\bibitem{hen}
Henrici P., \textit{Applied and computational complex analysis, Vol.
2, special functions, integral transforms, asymptotics and continued
fractions}, John Wiley, 1977.


\bibitem{gero}  Geronimo J.S., \textit{Scattering theory and matrix orthogonal polynomials
 on the real line}, Circuits Systems Sigmal Process 1(3-4), 471-495 (1982).

\bibitem{krenu}
 Krein M.G. and  Nudelman A.A., \textit{The Markov moment problem and
extremal problems}, Translations of Mathematical Monographs 50, AMS,
Providence, RI 1977.

\bibitem{kov0}
 Kovalishina I.V., \textit{New aspects of the classical moment problems.
Second doctoral thesis} (in Russian), Institute of Railway-Transport
Engineers, 1986.


\bibitem{kov1} Kovalishina I.V., \textit{Analytic theory of a class
of interpolation problems}, Izv. Math. 47(3), 455-497 (1983).

\bibitem{k1949}  Krein M.G., \textit{Fundamental aspects of the representation theory
of hermitian operators with deficiency $(m,m)$}, Ukrain. Mat. Zh.,
1(2),  3-66 (1949).

\bibitem{k1949a}  Krein M.G., \textit{Infinite $J$-matrices and a matrix moment
problem}, Dokl. Akad. Nauk 69(2),  125-128 (1949).

\bibitem{miran} Miranian L., \textit{Matrix-valued orthogonal polynomials on the real
 line: some extensions of the classical theory}, J. Phys. A: Math.
Gen. 38,  5731-5749 (2005).

\bibitem{pot0} Potapov V.P.,  \textit{The multiplicative structure of
  J-nonexpansive matrix functions}, Trudy Moskov. Mat. Ob. 4, 125-236
  (1955).



\bibitem{seri2}
 Serikova I.Yu., \textit{Indeterminacy criteria for the Nevanlinna-Pick
interpolation problem in class $R[a, b]$},
 Zb. Pr. Inst. Mat. NAN Ukr. 3(4),   126-142 (2006).

\bibitem{simon1} Simon  B., \textit{The classical problem as a self-adjoint
finite difference operator}, Adv. Math. 137,  82-203 (1998).

\bibitem{sorokin} Sorokin V. N.,  Van Iseghem J.,
 \textit{Matrix continued fraction}, J. Approx. Theory 96(2), 237-257 (1999).




\bibitem{stie}
 Stieltjes T.J., \textit{Recherches sur les fractions continues},
 Ann. Fac. Sci. Toulouse Math. (6), 4(1):Ji–Jiv, J1–J35, 1995. Reprint of the 1894
original (French).

\bibitem{thi}  Thiele H., \textit{Beitr\"age zu matriziellen Potenzmomentenproblemen},
 PhD Thesis, Leipzig University, 2006.

\bibitem{zag}
 Zagorodnyuk S., \textit{The truncated matrix Hausdorff moment
 problem}, Methods Appl. Anal. 19(1), 021-042 (2012).

\bibitem{zha}
 Zhaoa H., Zhub G., \textit{Matrix-valued
continued fractions}, J. Approx. Theory 120,  136-152 (2003).

\bibitem{zyg} Zygmunt M.J.,
\textit{Chebyshev polynomials and continued fractions},
 Lin. Alg. and Appl. 340, 155-168 (2002).



\end{thebibliography}
\end{document}